    \newtheorem{Lem}{Lemma}[section]
    \newtheorem{Lem-Def}{Lemma-Definition}[section]
    \newtheorem{Prop}[Lem]{Proposition}
    \newtheorem{Thm}[Lem]{Theorem}
\theoremstyle{definition}
    \newtheorem{Rem}[Lem]{Remark}
\newcommand{\Spec}{\text{Spec}\,}
\newcommand{\W}{\mathcal W}
\newcommand{\A}{\mathcal A}
\newcommand{\B}{\mathcal B}
\newcommand{\E}{\mathcal E}
\newcommand{\I}{\mathcal I}
\newcommand{\M}{\mathcal M}
\renewcommand{\P}{\mathcal P}
\newcommand{\N}{\mathcal N}
\newcommand{\T}{\mathcal T}
\newcommand{\R}{\mathcal R}
\renewcommand{\S}{\mathcal S}
\renewcommand{\L}{\mathcal L}
\renewcommand{\O}{\mathcal O}
\newcommand{\C}{\mathcal C}
\newcommand{\X}{\mathcal X}
\newcommand{\Y}{\mathcal Y}
\newcommand{\Pa}{\mathcal P}
\newcommand{\col}{\colon}
\newcommand{\Ps}{\mathbb{P}}
\newcommand{\ra}{\rightarrow}
\newcommand{\ol}{\overline}
\newcommand{\ze}{\mathbb{Z}}
\newcommand{\wt}{\widetilde}
\newcommand{\ox}{\otimes}
\newcommand{\wed}{\wedge}
\newcommand{\wh}{\widehat}
\newcommand{\dra}{\dashrightarrow}
\newcommand{\J}{\ol{\mathcal J}}
\newcommand{\term}{\text{Term}}
\newcommand{\lra}{\longrightarrow}
\renewcommand{\:}{\colon}
\begin{document}

\title[The degree-2 Abel--Jacobi map for nodal curves -- II]{The degree-2 Abel--Jacobi map for  nodal curves -- II}

\author{Marco Pacini}

\begin{abstract}
\noindent
We construct a resolution of the degree-2 Abel--Jacobi map for a regular smoothing of a nodal curve. More precisely, let $\pi\col\C\ra B$ be a regular smoothing of a nodal curve $C$ with a section $\sigma$ through its smooth locus. Let $\J$ be the degree-0 Esteves' compactified Jacobian, parametrizing torsion-free rank-1 sheaves on $\C/B$ that are canonically $\sigma$-quasistable. Consider the \emph{degree-2 Abel--Jacobi map} $\alpha^2\col\C^2:=\C\times_B\C\dra\J$ of $\C/B$, sending a pair $(Q_1,Q_2)$ of points of the fiber $C_\eta$ of $\pi$ over the generic point $\eta$ of $B$ to the invertible sheaf $\O_{C_\eta}(2\sigma(\eta)-Q_1-Q_2)$. We show that if $\phi\col \wh\C^2\ra \C^2$ is the blowup of $\C^2$ first along its diagonal subscheme and then along products of 2-tails and 3-tails of $C$, then the map $\alpha^2\circ \phi\col \wh\C^2\dra \J$ is a morphism.
\end{abstract}
 
\maketitle

\section{Introduction}

\subsection{The problem}

Let $C$ be a smooth connected projective curve over an algebraically closed field $K$. The \emph{degree-$d$ Abel map of $C$} is a map $\alpha^d_\P\:C^d\to J_C^f$ from the product of $d$ copies of the curve to its degree-$f$ Jacobian $J_C^f$, sending a $d$-tuple $(Q_1,\dots,Q_d)$ of points of $C$ to $\P\ox\O_C(-Q_1-\dots-Q_d)$, where $\P$ is a line bundle of degree $d+f$ on $C$. If $\P=\O_C(dP)$, for a point $P$ of $C$, we call $\alpha^d_\P$ the \emph{degree-$d$ Abel--Jacobi map of $C$}. The Abel map encodes many geometric properties of the curve. For example, the Abel theorem states that the fibers of $\alpha^d_\P$ are projectivized complete linear series, up to the action of the symmetric group. Thus, all the possible embeddings of $C$ in projective spaces are known once we know its Abel maps.

What about Abel maps for singular curves that are limits of smooth curves? This question is natural and potentially very useful. Indeed, the Abel theorem suggests a possible interplay between Abel maps and limits of linear series on singular curve. This interplay has been recently explored in \cite{EO} for curves of compact type with two components. It was through the study of degenerations of linear series that the celebrated Brill--Noether and Gieseker--Petri theorems were proved in \cite{GH} and \cite{Gi}. 

The theory of limit linear system has been systemized for curves of compact type in the seminal work of Eisenbud and Harris \cite{EH}. Nevertheless, a satisfactory theory of limit linear series for nodal curves is not available, although there are works in this direction in \cite{CG} and \cite{EM}  for curves with two components and a recent new approach in \cite{O} for curves of compact type with two components. 

Not much is known for Abel maps for singular curves. In fact, Abel maps have been constructed only in few cases: For irreducible curves in \cite{AK}, in degree one in \cite{CE} and \cite{CCE}, 
 for curves of compact type and any degree in \cite{CP}, and recently for nodal curves with two components and any degree in \cite{ACP}.
 This paper is devoted to the construction of a degree-2 Abel--Jacobi map for any nodal curve. A part of our construction is strongly based of the previous work \cite{CEP}, where the existence of degree-2 Abel maps for nodal curves is reduced to a series of numerical conditions. More precisely, a crucial step in this paper is to check the validity of the \emph{admissibility conditions} given in \cite[Definition 6.4]{CEP}, over which our Theorem \ref{thmA} is based.  
 
 All the known Abel maps are natural, meaning that they are limits of Abel maps of smooth curves over one-dimensional bases. 
 The general problem of defining Abel maps for a smoothing $\pi\col\C\ra B$ of a nodal curve remains open. Here, a \emph{smoothing $\pi\col\C\ra B$ of a nodal curve $C$} is a family of curves over $B:=\Spec K[[t]]$ with special fiber isomorphic to $C$ and with $\C$ smooth. There are two main obstructions in the construction of a resolution of higher degree Abel maps. 
 
 First, since rational Abel maps for the smoothing $\pi$ are naturally defined over the product $\C^d:=\C\times_B\cdots\times_B\C$ of $d$ copies of $\C$ over $B$, one should be able to understand how the scheme $\C^d$ behaves under a sequence of blowups. This issue has been considered for $d=2$ in \cite{CEP} and for any $d$ in \cite{ACP}, where the relevant informations about the local and global geometry of the schemes $\C^d$ are unrevealed. 
 
 Second, since a resolution should be defined for a smoothing of any given nodal curve, complex combinatorial problems naturally arise. Since Abel maps take values in a compactified Jacobian defined by means of numerical conditions (see Section \ref{Jacobian}), a natural combinatorial issue is how to ``convert" an invertible sheaf in such a way that it satisfies these conditions. It is equivalent to give a modular description of the so-called \emph{Abel--N\'eron maps}. These maps are natural extensions of Abel maps to the $B$-smooth locus of $\C^d$ by means of N\'eron models and take values either in the Caporaso's or in Esteves' compactified Jacobian constructed in \cite{C} and in \cite{E01} (see \cite{CE} and Section \ref{Abel-maps} for more details). The modularity of the Abel--N\'eron maps is described for the degree-2 Abel--Jacobi map in \cite{P1} and it is strongly used in this paper. We believe that a generalization of this result is possible, at least for the degree-$d$ Abel--Jacobi map, although this should require significant new ideas. 
 
\subsection{The results} Let $C$ be a nodal curve defined over the algebraically closed field $K$. Let $\pi\col\C\ra B$ be a smoothing of $C$. Let $\sigma$ be a section of $\pi$ through its smooth locus. The degree-$2$ Abel--Jacobi map of $\C/B$ is defined as the rational map $\alpha^2\col \C^2=\C\times_B\C\dra \J$ sending a pair $(Q_1,Q_2)$ of points of the fiber $C_\eta$ over the generic point $\eta$ of $B$ of $\pi$ to the invertible sheaf $\O_{C_\eta}(2\sigma(\eta)-Q_1-Q_2)$. Here, $\J$ is the proper fine moduli scheme introduced by Esteves in \cite{E01}, parametrizing degree-$0$ torsion-free rank-1 sheaves on $\C/B$ that are $\sigma$-quasistable (with respect to a canonical polarization). We refer to Section \ref{Jacobian} for more details. 

In general, the map $\alpha^2$ is not defined everywhere on $\C^2$, as the case of a two-component two-node curve already shows (see \cite{Co}). Thus, to construct a geometrically meaningful resolution of $\alpha^2$, first we need to understand the geometry of the blowups of the scheme $\C^2$. We choose to blowup $\C^2$ along its diagonal subscheme and along (the strict transforms of) Weil divisors of type $Z_1\times Z_2$, where $Z_1$ and $Z_2$ are subcurves of $C$. In fact, $\C^2$ is singular, and a suitable chain of such blowups give rise to a \emph{good partial desingularization} $\phi\col \wt\C^2\ra \C^2$ of $\C^2$, meaning that all the strict transforms $\phi^{-1}(Z_1\times Z_2)$  become Cartier divisors (see Section \ref{Not2}). 

Fix a good partial desingularization $\phi\col \wt\C^2\ra \C^2$. 
We then consider the family of curves $p_1\col \wt\C^2\times_B\C\ra \wt\C^2$, where $p_1$ is the projection onto the first factor. Since $\J$ is a fine moduli scheme, to get a resolution of $\alpha^2$ we need a torsion-free rank-1 sheaf on $\wt\C^2\times_B\C/\wt\C^2$ that is $\sigma$-quasistable on the fibers of $p_1$ and whose induced map $\wt\C^2\ra\J$ generically agrees with $\alpha^2$.
Actually, we consider a blowup $\psi\col \wt\C^3\ra \wt\C^2\times_B\C$ and we  construct such a sheaf as $\psi_*\L_\psi$, where $\L_\psi$ is an invertible sheaf on $\wt\C^3$.
 We refer to Section \ref{Abel-maps} for the definition of $\L_\psi$. Here, $\psi$ is a \emph{good partial desingularization of $\wt\C^2\times_B\C$}, meaning that all the strict transforms of the divisors $\phi^{-1}(Z_1\times Z_2)\times Z_3$ of $\wt\C^2\times_B\C$ via $\psi$ become Cartier divisors, where  $Z_1,$ $Z_2$ and $Z_3$ are subcurves of $C$.

Finally, we consider \emph{distinguished points of $\wt\C^2$}, that is points $A$ that are contained in the `` most  degenerate locus" of $\wt\C^2$. This means that $A$ is contained in the intersection of three distinct divisors of $\wt\C^2$ 
\begin{equation}\label{intersection}
\phi^{-1}(C_{\gamma_1}\times C_{\gamma_2})\cap\phi^{-1}(C_{\gamma_1}\times C_{\gamma'_2})\cap  \phi^{-1}(C_{\gamma'_1}\times C_{\gamma_2}),
\end{equation}
where $C_{\gamma_1}$, $C_{\gamma'_1}$, $C_{\gamma_2}$ and $C_{\gamma'_2}$ are components of $C$; in particular, $\phi(A)=(R_1,R_2)$, where $R_1$ and $R_2$ are nodes of $C$ (see also Section \ref{Not2}). 

The first relevant information about $\psi_*\L_\psi$ is obtained in Theorems \ref{main2} and \ref{main3}; we can state them as follows.

  \begin{Thm}\label{thmA}
  Let $\pi\col \C\ra B$ be a smoothing of a nodal curve $C$. Let $\phi\col \wt\C^2\ra \C^2$ and $\psi\col\wt\C^3\ra \wt\C^2\times_B \C$ be good partial desingularizations. Then $\psi_*\L_\psi$ is a relatively torsion-free, rank-1 sheaf on $\wt\C^2\times_B\C/\wt\C^2$ of relative degree 0, whose formation commutes with base change. Moreover, the rational map $\alpha^2\circ\phi\col \wt\C^2\dra\J$ induced by $\psi_*\L_\psi$ is defined on an open subset of $\wt\C^2$ containing the distinguished points in $\phi^{-1}(R,R)$, for every reducible node $R$ of $C$.
  \end{Thm}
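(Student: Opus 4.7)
The plan is to break the theorem into its structural half (relative torsion-free rank-1 of degree 0, with base-change) and its modular half (the induced map is defined at the distinguished points in $\phi^{-1}(R,R)$), and to reduce the latter to a multidegree computation governed by the admissibility conditions of \cite{CEP}.

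\medskip

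\noindent\textbf{Step 1 (local models for $\phi$ and $\psi$).} I would first record, from the material leading up to the statement, the local equations for $\wt\C^2\to\C^2$ and for $\psi\col\wt\C^3\to \wt\C^2\times_B\C$ near a distinguished point $A\in\phi^{-1}(R_1,R_2)$. By the formula \eqref{intersection}, the locus of interest is where three Cartier exceptional divisors $\phi^{-1}(C_{\gamma_1}\tm C_{\gamma_2})$, $\phi^{-1}(C_{\gamma_1}\tm C_{\gamma'_2})$ and $\phi^{-1}(C_{\gamma'_1}\tm C_{\gamma_2})$ meet, and $\psi$ is a blowup along strict transforms of divisors of the form $\phi^{-1}(Z_1\tm Z_2)\tm Z_3$. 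In these coordinates the fibers of $p_1\circ\psi\col \wt\C^3\to\wt\C^2$ are explicit chains/trees of rational curves attached to the appropriate components of $C$, and the invertible sheaf $\L_\psi$ constructed in Section \ref{Abel-maps} has an explicit restriction to each such fiber.

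\medskip

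\noindent\textbf{Step 2 (torsion-free, rank one, flat of relative degree zero).} With the local model in hand, I would compute $\psi_*\L_\psi$ fiberwise: on a general fiber of $p_1$ the map $\psi$ is an isomorphism so $\psi_*\L_\psi$ is invertible of degree $0$; on a special fiber, $\psi$ contracts a chain of $\Ps^1$'s to a point, and a standard calculation (push-forward of an invertible sheaf under a contraction of a $(-1)$/$(-2)$ chain depending on its degree on each exceptional component) shows that $\psi_*\L_\psi$ is torsion-free rank one on the geometric fibers and has relative degree zero. Combined with vanishing of $R^1\psi_*\L_\psi$, which follows from the same local analysis because each fiber is a tree of rational curves and $\L_\psi$ has non-negative degree on the exceptional components, cohomology and base change (Grauert/Grothendieck) gives flatness of $\psi_*\L_\psi$ over $\wt\C^2$ and commutation of its formation with arbitrary base change.

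\medskip

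\noindent\textbf{Step 3 (quasistability at distinguished points).} The modular part amounts to showing that for $A\in\phi^{-1}(R,R)$ a distinguished point with $R$ a reducible node, the restriction of $\psi_*\L_\psi$ to the fiber $\{A\}\tm_B\C\cong C$ is $\sigma$-quasistable with respect to the canonical polarization, so that it defines a point of $\J$. Writing $R=C_{\gamma_1}\cap C_{\gamma'_1}$ with $\gamma_1,\gamma'_1$ the two components through $R$, Step 1 lets me read off the multidegree of $(\psi_*\L_\psi)|_{\{A\}\tm_B\C}$ as a twist of $\O_C(2\sigma-2R)$ by the divisors dictated by the three exceptional components in \eqref{intersection}. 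I would then verify the Esteves stability inequalities for this multidegree on every subcurve $Z\subseteq C$; this is exactly where the admissibility conditions of \cite[Definition 6.4]{CEP} intervene, as they are designed to certify precisely such inequalities once the combinatorics of the blowup data is fixed.

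\medskip

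\noindent\textbf{Main obstacle.} Steps 1 and 2 are essentially a local, classical pushforward-under-a-contraction calculation, and I expect them to go through as in \cite{CEP,ACP}. The real work is Step 3: one must check $\sigma$-quasistability on \emph{every} subcurve of $C$, not only on those adjacent to $R$, and the contribution of each exceptional divisor from \eqref{intersection} to the multidegree has to be accounted for with the right sign. The paper's description of the modularity of the Abel--N\'eron map from \cite{P1} is the tool that pins down these contributions unambiguously, and I would reduce the verification to the combinatorial admissibility check of \cite[Definition 6.4]{CEP}, which is the step flagged by the author as the crucial one.
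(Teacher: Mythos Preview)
Your overall decomposition into a structural half and a modular half is correct, and matches the paper's split into Theorems \ref{main2} and \ref{main3}. However, there are two genuine gaps.

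\medskip

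\textbf{Step 2 rests on an incorrect premise.} You claim $R^1\psi_*\L_\psi$ vanishes because ``$\L_\psi$ has non-negative degree on the exceptional components.'' This is false in general: the correct property is $\psi$-\emph{admissibility}, meaning $\L_\psi$ has degree in $\{-1,0,1\}$ on every chain of $\psi$-exceptional components (Section \ref{sec5.2}). Admissibility is exactly what \cite[Proposition 5.2]{CEP} needs to conclude torsion-freeness, rank one, and base-change. Verifying admissibility is the bulk of the work in Theorem \ref{main2}: it amounts to checking the numerical inequalities \eqref{a}--\eqref{h} of Lemma \ref{adm-cond}, and the key input is Lemma \ref{adia-modif}, which shows $\O_\C(Z_{i,k})|_C\simeq\O_\C(Z_{j,k}-Y)|_C$ for a subcurve $Y$ with $C_i\subseteq Y$, $C_j\subseteq Y^c$. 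That lemma in turn relies on the structure of the difference sets $\T^s_{i,k|j,k}$ established in Proposition \ref{Diff}. Your Step 2 skips this entirely.

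\medskip

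\textbf{Step 3 conflates admissibility with quasistability.} The admissibility conditions of \cite[Definition 6.4]{CEP} certify that $\psi_*\L_\psi$ is torsion-free rank one; they do \emph{not} by themselves give $\sigma$-quasistability on the fiber over a distinguished point $A\in\phi^{-1}(R,R)$. The paper's argument (Theorem \ref{main3}) is different from a direct multidegree check: it uses Lemma \ref{compadm} to reduce to showing that an auxiliary invertible sheaf $\wh N$ on $X_A\simeq C(2)$ is quasistable with respect to the pulled-back polarization. The crucial ingredient making this work at diagonal points is again Lemma \ref{adia-modif}, now in the form of condition (i) of Theorem \ref{main3}, which expresses $Z_{\gamma,\gamma}$ as $Z_{\gamma,\gamma'}-Y$. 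With this, one decomposes an arbitrary subcurve $V\subseteq X_A$ via $V'=\psi(V)\wedge Y$ and $V''=\psi(V)\wedge Y^c$, and bounds $\beta_{\wh N}(V)$ above and below by the $\beta$-values of two sheaves $N',N''$ on $C$ already known to be quasistable (Lemma \ref{mu*}). Your proposed direct verification of the inequalities on every subcurve, without this comparison device, would have no organizing principle; the Remark following Theorem \ref{main3} notes that even this decomposition trick breaks down away from the diagonal.
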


The question whether or not  $\psi_*\L_\psi$ is quasistable is then reduced to checking quasistabilty of 
the restriction of $\psi_*\L_\psi$ to the fibers $p_1^{-1}(A)$, where $A$ is a distinguished point of $\wt\C^2$ contained in $\phi^{-1}(R_1,R_2)$, for reducible nodes $R_1$ and $R_2$ of $C$ (see Lemma \ref{ext-red}). By Theorem \ref{thmA} we may also assume that $R_1$ and $R_2$ are distinct. 

The key step is to introduce a combinatorial condition on a distinguished point $A$ which turns out to be equivalent to the fact that $\alpha^2\circ\phi$ is defined on an open subset of $\wt\C^2$ containing $A$. If $A$ is contained in the intersection \eqref{intersection}, this condition only depends 
  on the \emph{nested sets of tails of $C$} associated to $(\gamma_1,\gamma_2)$, $(\gamma_1,\gamma'_2)$ and $(\gamma'_1,\gamma_2)$ (see Section \ref{Nestedsets} for the definition of these sets). 
  We say that $A$ is \emph{quasistable} if it satisfies such a combinatorial condition (see Section \ref{quas-sync-dist} for more details). The key result is contained in Theorem \ref{main4}; we can state it as follows.
 
  \begin{Thm}\label{thmB}
  Let $\pi\col \C\ra B$ be a smoothing of a nodal curve $C$ with a section $\sigma$ through its smooth locus. Let $\phi\col \wt\C^2\ra \C^2$ and $\psi\col\wt\C^3\ra \wt\C^2\times_B \C$ be good partial desingularizations. Consider a pair $(R_1,R_2)$ of reducible nodes of $C$, with $R_1\ne R_2$, and let $\A$ be the set of distinguished points of $\C^2$ contained in $\phi^{-1}(R_1,R_2)$. For every $A$ in $\A$, let $X_A=(p_1\circ\psi)^{-1}(A)$ and identify $p_1^{-1}(A)$ with $C$.
   The following properties are equivalent.
  \begin{itemize}
  \item
 $A$ is quasistable, for every $A$ in $\A$;
  \item
 $\psi|_{X_A*}(\L_\psi|_{X_A})$ is $\sigma(0)$-quasistable, for every $A$ in $\A$;
  \item
 $\psi|_{X_A*}(\L_\psi|_{X_A})$ is simple, for every $A$ in $\A$;     
   \item
      the rational map $\alpha^2\circ\phi\col\wt\C^2\dra\J$ induced by $\psi_*\L_\psi$ is defined on an open subset of $\wt\C^2$ containing $\A$.
  \end{itemize}
  \end{Thm}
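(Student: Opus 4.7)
My plan is to prove the four-way equivalence via the schema $(2)\Leftrightarrow(4)$, $(2)\Rightarrow(3)\Rightarrow(2)$, and $(1)\Leftrightarrow(2)$, with the last being the main technical content. The first two are ``soft'' and follow from general principles; the combinatorial equivalence $(1)\Leftrightarrow(2)$ is where the substance lies.

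The equivalence $(2)\Leftrightarrow(4)$ is essentially the universal property of the fine moduli scheme $\J$. By Theorem \ref{thmA}, the sheaf $\psi_*\L_\psi$ is a relatively torsion-free rank-1 sheaf of relative degree $0$ on $\wt\C^2\times_B\C/\wt\C^2$ whose formation commutes with base change, so its restriction to $p_1^{-1}(A)$ is canonically identified with $\psi|_{X_A*}(\L_\psi|_{X_A})$. The rational map $\alpha^2\circ\phi$ therefore extends through $A$ precisely when this fiber is $\sigma(0)$-quasistable. For $(2)\Rightarrow(3)$ I invoke the standard fact from \cite{E01} that every $\sigma$-quasistable torsion-free rank-1 sheaf on a nodal curve is simple. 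For $(3)\Rightarrow(2)$ I use that $\psi_*\L_\psi$ realises a one-parameter deformation of $\psi|_{X_A*}(\L_\psi|_{X_A})$ whose generic member is $\sigma$-quasistable; the valuative criterion of properness for $\J$, together with the admissibility framework of \cite[Definition 6.4]{CEP}, then forces a simple limit at $A$ to coincide with the unique quasistable limit.

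The heart of the proof is $(1)\Leftrightarrow(2)$, which I would approach by an explicit multidegree computation. Since $\L_\psi$ is produced from the naive pull-back of the degree-2 Abel--Jacobi line bundle by twisting along the strict transforms of the divisors $\phi^{-1}(Z_1\times Z_2)\times Z_3$, and since $X_A=(p_1\circ\psi)^{-1}(A)$ decomposes according to the nested sets of tails associated to the three pairs $(\gamma_1,\gamma_2),(\gamma_1,\gamma'_2),(\gamma'_1,\gamma_2)$ appearing in \eqref{intersection}, each such exceptional divisor meets the components of $X_A$ in multiplicities that can be read off combinatorially from those nested sets. Pushing forward through $\psi|_{X_A}$ yields a torsion-free rank-1 sheaf whose multidegree on each component of $C$ is a sum of tail-theoretic contributions indexed by the three nested sets. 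The combinatorial notion of ``quasistable distinguished point'' introduced in Section \ref{quas-sync-dist} is engineered precisely so that the resulting multidegree satisfies Esteves' $\sigma(0)$-quasistability inequalities on every subcurve $Z\subsetneq C$, giving both implications simultaneously.

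The main obstacle I anticipate is the multidegree computation itself: one must carefully track how the iterated blowups constituting $\phi$ and $\psi$ (along the diagonal, along products of $2$-tails, and along products involving $3$-tails) contribute exceptional divisors to the twist on each component of $X_A$, and then verify that contributions coming from three distinct nested sets of tails meeting at $A$ assemble into exactly the tail-theoretic inequality defining quasistability of $A$. The modular description of the Abel--N\'eron map from \cite{P1} and the admissibility conditions of \cite{CEP} should supply the precise bookkeeping needed to identify these sums with the numerical invariants of the nested sets and thereby close the equivalence.
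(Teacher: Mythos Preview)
Your schema has a genuine gap at $(3)\Rightarrow(2)$. You treat this as soft, invoking the valuative criterion of properness for $\J$ to say that a simple limit must coincide with the unique quasistable one. But properness only guarantees that \emph{some} quasistable limit exists along any one-parameter family; it does not force the particular sheaf $\psi|_{X_A*}(\L_\psi|_{X_A})$ you already have to be that limit. Simplicity alone is strictly weaker than quasistability, as the paper itself emphasizes (``it is easy to find examples showing that the two notions are not equivalent''), and nothing in the admissibility framework of \cite{CEP} upgrades simple to quasistable. The paper calls the equivalence $(2)\Leftrightarrow(3)$ ``surprising'' precisely because it is \emph{not} a general principle: it holds only for these specific sheaves and only after the combinatorial work is done.

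The paper's actual route is the cycle $(1)\Rightarrow\text{synchronized}\Rightarrow(2)\Rightarrow(3)\Rightarrow(1)$, with the last implication proved by contrapositive. The step $(1)\Rightarrow\text{synchronized}$ (Proposition~\ref{qs->s}) is the main technical content and is not a multidegree computation: it is a delicate inductive comparison of the nested sets $\wh\T^s_A$ on $C(2)$ with the disjoint union $\T^s_A$ of the three nested sets on $C$, showing that the contraction $\mu$ induces a bijection between them. Once this bijection is in hand, Theorem~\ref{main1} produces a quasistable invertible sheaf on $X_A$ differing from $\L_\psi|_{X_A}$ only by twists along $\psi|_{X_A}$-exceptional components, and Lemma~\ref{compadm} transfers quasistability to the pushforward. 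For $\neg(1)\Rightarrow\neg(3)$, the paper does not compute multidegrees either: it explicitly builds a subcurve $Y\subset C$ from the difference sets $\T^s_{\gamma_1,\gamma_2\,|\,\gamma_{3-t},\gamma'_t}$ and shows that $\psi|_{X_A*}(\L_\psi|_{X_A})$ fails to be invertible at every node of $Y\cap Y^c$, which is a disconnecting set, so the sheaf is not simple. Your proposed ``direct multidegree computation'' does not supply either of these constructions, and in particular gives no mechanism for the reverse implication $(2)\Rightarrow(1)$.
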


Here, a torsion-free rank-1 sheaf $\I$ on a nodal curve $C$ is \emph{simple} if $\text{Hom}(\I,\I)=K$. In general, if $\I$ is quasistable, then it is simple, and it is easy to find examples showing that the two notions are not equivalent. It is worth to notice that Theorem \ref{thmB} implies the surprising result that $\psi_*\L_\psi$ is $\sigma$-quasistable if and only if it is simple.

Finally, we produce a global blowup of $\C^2$ fulfilling the local criterion of Theorem \ref{thmB}. This is done in Theorem \ref{main5} by taking blowups along (the strict transforms of) products 2-tails and 3-tails of $C$, that is connected subcurves $W$ of $C$ such that $\ol{C\setminus W}$ is connected and such that $W\cap \ol{C\setminus W}$ has cardinality 2 or 3. 
  
  \begin{Thm}
  Let $\pi\col \C\ra B$ be a smoothing of a nodal curve $C$. Let $W_1,\dots,W_N$ be the 2-tails and the 3-tails of $C$. Consider the sequence of blowups
  \[
\wh\C^2_N\stackrel{\phi_N}{\lra}\cdots \stackrel{\phi_2}{\lra}\wh\C^2_1\stackrel{\phi_1}{\lra}\wh\C^2_0\stackrel{\phi_0}{\lra}\C^2
  \]
  where $\phi_0$ is the blowup of $\C^2$ along its diagonal subscheme and $\phi_i$ is defined as the blowup of $\wh\C^2_{i-1}$ along the strict transform of the divisor $W_i\times W_i$ of $\C^2$ via $\phi_0\circ\cdots\circ \phi_{i-1}$. Then the rational map
  \[
\alpha^2\circ\phi_0\circ\cdots\circ\phi_N\col \wh\C^2_N\dra\J
  \]
  is a morphism.
  \end{Thm}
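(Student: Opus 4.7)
The strategy is to verify that the sequence $\phi := \phi_0 \circ \cdots \circ \phi_N$, together with an auxiliary good partial desingularization of $\wh\C^2_N \times_B \C$, satisfies the hypotheses of Theorems A and B, and then to discharge the resulting combinatorial condition by exploiting the explicit choice of 2-tails and 3-tails in the construction.

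First, I would show that $\phi$ is itself a good partial desingularization of $\C^2$ in the sense of Section \ref{Not2}: $\phi_0$ is the diagonal blowup, and each subsequent $\phi_i$ is the blowup along the strict transform of a Weil divisor of the form $W_i \times W_i$ with $W_i$ a subcurve of $C$, so by construction all relevant strict transforms become Cartier on $\wh\C^2_N$. I would then fix a good partial desingularization $\psi$ of $\wh\C^2_N \times_B \C$ and consider the associated sheaf $\psi_* \L_\psi$ defined in Section \ref{Abel-maps}. Theorem A then immediately gives that $\psi_* \L_\psi$ is a relatively torsion-free, rank-$1$ sheaf of relative degree $0$ whose formation commutes with base change, and that the induced rational map $\alpha^2 \circ \phi \colon \wh\C^2_N \dra \J$ is already defined at every distinguished point of $\wh\C^2_N$ lying over a pair $(R,R)$ with $R$ a reducible node of $C$.

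To finish the extension, by Lemma \ref{ext-red} one is reduced to checking that for every pair $(R_1,R_2)$ of \emph{distinct} reducible nodes and every distinguished point $A \in \phi^{-1}(R_1,R_2)$, the restriction $\psi|_{X_A*}(\L_\psi|_{X_A})$ is $\sigma(0)$-quasistable. Theorem B then converts this into the purely combinatorial requirement that every such $A$ be quasistable in the sense of Section \ref{quas-sync-dist}, a condition that depends only on the three nested sets of tails attached to the divisors cutting out $A$ in the intersection \eqref{intersection}.

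The combinatorial verification is where I expect the main difficulty to lie. Locally near $A$ lying over $(R_1,R_2)$ with $R_1 \neq R_2$ reducible, the three Cartier divisors of \eqref{intersection} are indexed by choices of components of $C$ incident to $R_1$ and $R_2$, and the subcurves appearing in their nested sets of tails can be enumerated explicitly. The claim I would establish is that every subcurve relevant to the quasistability inequalities at such an $A$ is either a 2-tail of $C$ (arising from a connected piece of $C$ attached through exactly $R_1$ and $R_2$) or a 3-tail of $C$ (arising when one further separating node is forced by the nesting structure); hence the blowups $\phi_1,\dots,\phi_N$ have, by construction, produced at $A$ precisely the Cartier divisors needed to realise the quasistability numerical conditions. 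Once this is verified pointwise, Theorem B upgrades the extension claim and, combined with the diagonal cases from Theorem A and with the fact that $\alpha^2$ is already defined on the $B$-smooth locus, shows that $\alpha^2 \circ \phi_0 \circ \cdots \circ \phi_N$ extends to a morphism on all of $\wh\C^2_N$.
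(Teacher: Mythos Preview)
Your overall strategy --- reduce to the quasistability of every distinguished point $A$ via Theorems~\ref{main2}, \ref{main3}, \ref{main4} and Lemma~\ref{ext-red} --- matches the paper's, but there is a genuine gap in the first step and the combinatorial verification is not the one that actually works.

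\textbf{The map $\phi_\T$ is in general \emph{not} a good partial desingularization.} You assert that after blowing up the diagonal and the divisors $W_i\times W_i$ for all 2-tails and 3-tails, every strict transform of a product $Z_1\times Z_2$ becomes Cartier. This is false: if $R_1\ne R_2$ are reducible nodes such that no 2-tail or 3-tail $W$ satisfies $\{R_1,R_2\}\subseteq\term_W$, then $\phi_\T$ is an isomorphism near $(R_1,R_2)$ and $\wh\C^2_N$ remains singular there. Consequently Theorems~\ref{main2}--\ref{main4} cannot be applied to $\phi_\T$ directly. The paper handles this by first proving the quasistability claim for \emph{any} good partial desingularization $\phi$ that factors as $\phi_\T\circ\eta$; this yields that $\alpha^2\circ\phi$ is a morphism. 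Then, at each remaining singular point $(R_1,R_2)$ of $\wh\C^2_\T$, one takes the two inequivalent good partial desingularizations obtained by blowing up along $C_{\gamma_1}\times C_{\gamma_2}$ and along $C_{\gamma_1}\times C_{\gamma'_2}$, observes that both composed maps are morphisms, and concludes (as in \cite[Theorem 6.1]{CEP}) that they factor through a common morphism on a neighborhood of $(R_1,R_2)$ in $\wh\C^2_\T$. Your proposal omits this final descent step entirely.

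\textbf{The combinatorial verification is different from what you sketch.} You propose to show that the subcurves governing the quasistability inequalities at $A$ are 2-tails or 3-tails, so that the blowups ``produce the needed Cartier divisors.'' The paper's argument goes in the opposite direction. Suppose, for some labeling with $R_1\in C_{\gamma_1}\cap C_{\gamma'_1}$ and $R_2\in C_{\gamma_2}\cap C_{\gamma'_2}$, that the quasistability condition for the pair $(\gamma_1,\gamma_2)$ fails, i.e.\ there are $W,W'\in\T^2_{\gamma_1,\gamma_2}\cup\T^3_{\gamma_1,\gamma_2}$ with $R_1\in\term_W$ and $R_2\in\term_{W'}$. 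Then $W\wedge W'$ is a 2-tail or 3-tail (using \cite[Lemma 3.3(iii)]{P1}) with $\{R_1,R_2\}\subseteq\term_{W\wedge W'}$ and $C_{\gamma_1}\cup C_{\gamma_2}\subseteq W\wedge W'$. Since $(W\wedge W')\times(W\wedge W')$ is one of the divisors blown up, $\phi_\T$ is locally at $(R_1,R_2)$ the blowup along $C_{\gamma_1}\times C_{\gamma_2}$; this forces $A$ into one of the two configurations where the ``corner'' pair of $\P_A$ is $(\gamma_1,\gamma'_2)$ or $(\gamma'_1,\gamma_2)$, not $(\gamma_1,\gamma_2)$ or $(\gamma'_1,\gamma'_2)$. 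Hence the quasistability test for $A$ never involves $\T_{\gamma_1,\gamma_2}$ or $\T_{\gamma'_1,\gamma'_2}$, and $A$ is quasistable. (The independence of the ordering, proved in the first paragraph of the paper's proof, guarantees that different tails with $\{R_1,R_2\}$ in their terminal sets do not produce conflicting local blowups.)
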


\subsection{Notation and terminology}\label{Not1}
Throughout the paper we will use the following notation and terminology.
We work over an algebraically closed field $K$. A \emph{curve} is a connected, projective and reduced scheme of dimension 1 over $K$. We will always consider curves with  nodal singularities.

Let $C$ be a curve. We denote the irreducible components of $C$ by $C_1,\ldots, C_p$. The genus of $C$ is $g:=1-\chi(\O_C)$. We denote by $\omega_C$ the dualizing sheaf of $C$.  A \emph{subcurve} of $C$ is a union of irreducible components of $C$. Let $Z$ be a proper subcurve of $C$. We let $Z^c:=\overline{C\setminus Z}$ and call it the \emph{complementary curve of $Z$}. We call a point in $Z\cap Z^c$ a \emph{terminal point of $Z$}, and we set 
\[
\term_Z:=Z\cap Z^c\; \text{ and } \; k_Z:=\#\term_Z.
\]
 Moreover, we set $\term_C=\term_\emptyset=\emptyset$. We say that $Z$ is 
  a \emph{tail} if $Z$ and $Z^c$ are connected; it is a \emph{$k$-tail} if $k_Z=k$.
  If $R$ is a node of $C$ such that $R\in C_\gamma\cap C_{\gamma'}$ for $(\gamma,\gamma')\in\{1,\dots,p\}^2$, we say that $Z$ \emph{crosses $R$} if $C_\gamma\cup C_{\gamma'}\subseteq Z$. 
  A node $R$ of $C$ is \emph{reducible} if $R\in Z\cap Z^c$ for some proper subcurve $Z$, otherwise it is \emph{irreducible}. We denote by $\N(C)$ the set of reducible nodes of $C$.  
 A subset $\Delta$ of the set of nodes of $C$ is \emph{disconnecting} if the normalization of $C$ at the points of $\Delta$ is not connected.

Let $Z$ and $Z'$ be subcurves of $C$.  
 We write $Z\prec Z'$ if $Z\subsetneq Z'$ and the intersection $\term_Z\cap\term_{Z'}$ is empty. Moreover, we write $Z\wed Z'$ to denote the union of the components of $C$ contained in $Z\cap Z'$. 
 If  $\term_Z\cap \term_{Z'}$ is nonempty, we say that the pair $(Z, Z')$ is \emph{terminal}, or that $Z$ is $Z'$-terminal, or that $Z'$ is $Z$-terminal; otherwise, we say that $(Z, Z')$ is \emph{free}.  
 We say that $(Z, Z')$ is \emph{perfect} if one of the following condition holds 
 $$Z\subseteq Z', \,\,\, Z'\subseteq Z, \,\,\, Z^c\subseteq Z', \,\,\,Z'\subseteq Z^c.$$
  If $\S$ is a set of subcurve of $C$, we say that $Z$ is $\S$-free if $(Z, W)$ is free, for every $W$ in $\S$. 
  If $\A$ and $\B$ are sets, we denote by $\A\sqcup \B$ the disjoint union of $\A$ and $\B$.

Given a map of curves $\mu\col C'\to C$ we say that an irreducible component of $C'$ is \emph{$\mu$-exceptional} if it is a smooth rational curve and is contracted by the map. A \emph{chain of rational curves (of length $d$)} is a curve which is the union of smooth rational curves $E_1,\ldots, E_d$ such that $E_i\cap E_j$ is empty if $|i-j|>1$ and $\#(E_i\cap E_{i+1})=1$. A \emph{chain of $\mu$-exceptional components} is a chain of $\mu$-exceptional curves.

 We define the curve $C(d)$ as a curve endowed with a map $\mu\col C(d)\to C$, called \emph{contraction map} such that $\mu$ is an isomorphism over the smooth locus of $C$, and the preimage of each node of $C$ consists of a chain of $\mu$-exceptional components of length $d$.
     Let $W$ be a subcurve  of $C$. A subcurve $Y$ of $C(d)$ is a 
 \emph{$W$-lifting} if $\mu(Y)=W$.  Since the maximal chains of $\mu$-exceptional components  have $d$ components, there are exactly $d$ $W$-liftings $L_0^W, L_1^W, \dots,L_{d-1}^W$ such that
 \[
 L_0^W\prec L_1^W\prec \cdots\prec L_{d-1}^W.
 \]
   We call the $W$-liftings $L_i^W$ \emph{the canonical $W$-liftings}.   
 We will use the previous setup just in the case $d=2$.

A \emph{family of curves} is a proper and flat morphism $\pi\col\mathcal C\ra B$ whose fibers are curves. The family $\pi\col\C\to B$ is called \emph{local} if $B=\Spec(K[[t]])$ and \emph{regular} if $\C$ is regular; in this case, we denote by $0$ and $\eta$ respectively the closed and the generic point of $B$, and we set $C_\eta:=\pi^{-1}(\eta)$. A \emph{smoothing} of a curve $C$ is a regular local family $\pi\col\C\to B$ whose fiber over $0$ is isomorphic to $C$ (and hence with $C_\eta$ smooth).

\section{Compactified Jacobians and Abel maps}

\subsection{Compactified Jacobians of nodal curves}\label{Jacobian}
\noindent
  Let $C$ be a nodal curve with irreducible components $C_1,\ldots,C_p$ and let $P$ be a smooth point of $C$. 
  The \emph{degree-$f$ Jacobian} of $C$ is the scheme parametrizing the equivalence classes of degree-$f$ invertible sheaves on $C$. In general, this scheme is neither proper nor of finite type. To solve these issues we resort to torsion-free rank-$1$ simple sheaves and to stability conditions.
  
A coherent sheaf $\I$ on $C$ is 
\emph{torsion-free} if it has no embedded components, \emph{rank-1} if it has 
generic rank 1 at each component of $C$, and \emph{simple} if 
$\text{Hom}(\I,\I)=K$. Equivalently, $\I$ is not simple if and only if the locus over which $\I$ is non-invertible consists of a set of disconnecting nodes of $C$. We call $\deg(\I) := \chi(\I )-\chi(\O_C)$ the \emph{degree}  of $\I$. 
  
  A \emph{polarization of degree $f$ on $C$} is a vector bundle of degree $f$ and rank $r>0$ on $C$. Given a polarization $\E$ of degree $f$ and rank $r$ on $C$, we consider its \emph{multi-slope}
  \[
  (e_{C_1},\dots,e_{C_p}):=\left(-\frac{\deg\E|_{C_1}}{r},\dots,-\frac{\deg\E|_{C_p}}{r}\right).
  \]
  Let $\I$ be a  torsion-free  rank-$1$ sheaf of degree $f$ on $C$. For every proper subcurve $Y$ of $C$, we define the sheaf $\I_Y$ as the sheaf $\I|_Y$ modulo torsion, and we set 
  \[
  \beta_{\I}(Y,\E):=\chi(\I_Y)-\sum_{C_i\subset Y} e_{C_i}.
  \]
  
   We say that $\I$ is \emph{$P$-quasistable over $Y$ with respect to $\E$} if the following conditions hold
\begin{eqnarray*}
0< \beta_\I(Y,\E) \leq k_Y, \text{ if } P\in Y & \text{ and } & 0\leq\beta_\I(Y,\E) < k_Y, \text{ if } P\not\in Y  
\end{eqnarray*}
Note that $\I$ is $P$-quasistable over $Y$ if and only if it is over $Y^c$. 
 We say that $\I$ is \emph{$P$-quasistable with respect to $\E$} if it is $P$-quasistable with respect to $\E$ over every proper subcurve of $C$. Since the conditions are additive on connected components it is enough to check them over connected subcurves. In fact, it is easy to see that it suffices to check on connected subcurves with connected complement.

Let $\pi\col \C\ra B$ be a family of nodal curves. Assume that there are sections $\sigma_1,\dots,\sigma_n\:  B\ra \C$ of $\pi$ through its smooth locus and such that, for every $b\in B$ and for every  irreducible component $Y$ of $\pi^{-1}(b)$, we have $\sigma_i(b)\in Y$, for some $i$ in $\{1,\dots, n\}$.  Notice that this condition is satisfied if $\pi$ is a smoothing of a nodal curve (see \cite[Proposition 5 of Section 2.3]{BLR}). 

Let $\sigma\:  B\ra \C$ be a section of $\pi$ through its smooth locus of and 
  $\E$ be a polarization of degree $f$ on $\C/B$, i.e. a vector bundle $\E$ on $\C$ of relative degree $f$ and rank $r>0$. We say that a sheaf $\I$ over $\C$ is \emph{$\sigma$-quasistable with respect to $\E$} if $\I|_{\pi^{-1}(b)}$ is a torsion-free rank-$1$ sheaf that is $\sigma(b)$-quasistable with respect to $\E|_{\pi^{-1}(b)}$, for every $b\in B$. The \emph{degree-$f$ compactified Jacobian of $\C/B$} is the scheme $\overline {\mathcal J^f}$ parametrizing degree-$f$ sheaves over $\C$ that are $\sigma$-quasistable with respect to $\E$. This scheme is proper and of finite type (see \cite[Theorems A and B]{E01}) and it represents the contravariant functor $\mathbf{J}$ from the category of locally Noetherian $B$-schemes to sets, defined on a $B$-scheme $S$ by
\[
\mathbf{J}(S):=\{\sigma_S\text{-quasistable sheaves of degree $f$ over } \C\times_B S\stackrel{\pi_S}\lra S\}/\sim
\]
where $\pi_S$ is the projection onto the second factor and $\sigma_S$ is the pullback of the section $\sigma$, and where $\sim$ is the equivalence relation given by $\I_1\sim \I_2$ if and only if there exists an invertible sheaf $M$ on $S$ such that $\I_1\cong \I_2\otimes \pi_S^*M$. 

If $g$ is the genus of the fibers of $\pi$, a \emph{canonical polarization on $\C/B$} is a polarization on $\C/B$ with the same multi-slope of the polarization on $\C/B$ given by
$$\mathcal E:=
\begin{cases}
\begin{array}{ll}
 \O_\C^{\oplus(2g-3)}\oplus \omega_\pi^{\otimes g-1} & \text{ if } g\ge 2, \\
\O_\C & \text{ if } g=1, \\
\O_\C\oplus \omega_\pi  & \text{ if } g=0.
\end{array}
\end{cases}$$
where $\omega_\pi$ is the relative dualizing sheaf of $\pi$.
If $\E$ is a canonical polarization on $\C/B$, for every $b\in B$ and for every proper subcurve $Y$ of $\pi^{-1}(b)$ we have
\[
\beta_{\I|_{\pi^{-1}(b)}}(Y):=\beta_{\I|_{\pi^{-1}(b)}}(Y,\E|_{\pi^{-1}(b)})=\deg(\I_Y)+\frac{k_Y}{2}.
\]
If a sheaf $\I$ over $\C$ is $\sigma$-quasistable with respect to a canonical polarization on $\C/B$, we simply say that \emph{$\I$ is $\sigma$-quasistable}.
 We will denote by $\J$ the degree-0 compactified Jacobian of $\C/B$ parametrizing sheaves on $\C/B$ that are $\sigma$-quasistable.

\subsection{The double and triple product}\label{Not2}

Let $C$ be a nodal curve with irreducible components $C_1,\dots,C_p$ and $\pi\:\C\ra B$ be a  smoothing of $C$.
We set
\[
\C^2:=\C\times_B \C \; \text{ and } \; \C^3:=\C^2\times_B\C. 
\]
Let us recall some important facts of the local geometry of $\C^2$ and $\C^3$; we refer the reader to \cite[Section 3 and 4]{CEP} for more details and the proofs of the statements. 

Recall that $B=\Spec K[[t]]$. The completion of the local ring of $\wt\C^2$ at $(R_1,R_2)$, 
with $\{R_1,R_2\}\subseteq \N(C)$, is given by
\[
\wh\O_{\C^2,(R_1,R_2)}\simeq \frac{K[[t,x_0,x_1,y_0,y_1]]}{(x_0x_1-t, y_0y_1-t)}\simeq \frac{K[[x_0,x_1,y_0,y_1]]}{(x_0x_1-y_0y_1)}
\]
where $x_0,x_1$ and $y_0,y_1$ are local coordinates of $\C$ at $R_1$ and $R_2$, and where the map $\pi\col\C\ra B$ is given locally at $R_1$ and $R_2$ by $t=x_0x_1$ and $t=y_0y_1$.

We see that, locally around $(R_1,R_2)$, the singular locus of $\C^2$ consists exactly of the point $(R_1,R_2)$. Assume that $R_1\in C_{\gamma_1}\cap C_{\gamma'_1}$ and $R_2\in C_{\gamma_2}\cap C_{\gamma'_2}$, for $(\gamma_1,\gamma'_1)$ and $(\gamma_2,\gamma'_2)$ in $\{1,\dots,p\}^2$. To get a desingularization of $\C^2$ locally around $(R_1,R_2)$, one can perform the blowup of the Weil divisor $C_{\gamma_1}\times C_{\gamma_2}$ of $\C^2$ (which is equivalent to the blowup of the Weil divisor $C_{\gamma'_1}\times C_{\gamma'_2}$ of $\C^2)$, or the blowup of the Weil divisor $C_{\gamma_1}\times C_{\gamma'_2}$ of $\C^2$ (which is equivalent to the blowup of the Weil divisor $C_{\gamma'_1}\times C_{\gamma_2}$ of $\C^2$). If $\eta\col \wh\C^2\ra \C$ is one of these two blowups, then there is an open subset $U$ of $\C^2$ containing $(R_1,R_2)$ such  that $\eta^{-1}(U)$ is smooth and such that $\eta$ is an isomorphism over $U\setminus\{(R_1,R_2)\}$ and $\eta^{-1}(R_1,R_2)$ is isomorphic to a smooth rational curve. This rational curve is contained in $C_{\gamma_1,\gamma_2,\phi}$ and $C_{\gamma'_1,\gamma'_2,\phi}$ (respectively in $C_{\gamma_1,\gamma'_2,\phi}$ and $C_{\gamma'_1,\gamma_2,\phi}$) if and only if  $\eta$ is the blowup along $C_{\gamma_1,\gamma_2}$ (respectively along $C_{\gamma_1,\gamma'_2}$).  If $R:=R_1=R_2$, with $R\in C_{\gamma}\cap C_{\gamma'}$, for $(\gamma,\gamma')$ in $\{1,\dots,p\}^2$, we can always desingularize $\C^2$ locally around $(R,R)$ by blowing up the diagonal subscheme of $\C^2$, which is equivalent to the blowup of the Weil divisor $C_{\gamma}\times C_{\gamma'}$ of $\C^2$.  The local picture of these blowups is illustrated in Figure 1.

Perform a chain of blowups
\begin{equation}\label{C2-good-def}
\phi\col\wt\C^2:=\wh\C^2_M\stackrel{\phi_M}{\lra}\cdots\stackrel{\phi_2}{\lra}\wh\C^2_1\stackrel{\phi_1}{\lra}\wh\C^2_0\stackrel{\phi_0}{\lra}\C^2
\end{equation}
where $\phi_0$ is the blowup along the diagonal divisor of $\C^2$ and where $\phi_i$ is the blowup of $\wh\C^2_{i-1}$ along the strict transform of some Weil divisor $Z_{i,1}\times Z_{i,2}$ of $\C^2$ via $\phi_0\circ\cdots\circ\phi_{i-1}$, for subcurves $Z_{i,1}$ and $Z_{i,2}$ of $C$. Since  $\phi_0$ gives rise to a desingularization of $\C^2$ locally around $(R,R)$, for $R$ reducible node  of $C$, and $\phi_i$ to a desingularization of $\C^2$ locally around the pairs $(R_1,R_2)$ of reducible nodes of $C$, for $R_1\in Z_{i,1}\cap Z_{i,1}^c$ and $R_2\in Z_{i,2}\cap Z_{i,2}^c$, we see that if the chain in \eqref{C2-good-def} is long and varied enough, the scheme $\wt\C^2$ is smooth away from the locus of points $(R_1,R_2)$, where one between $R_1$ and $R_2$ is an irreducible node of $C$. In particular, the strict transform via $\phi$ of any divisor of type $Z_1\times Z_2$, for subcurves $Z_1$ and $Z_1$ of $C$, is a Cartier divisor of $\wt\C^2$. In this case, we call a map $\phi\col \wt\C^2\ra \C^2$ as in \eqref{C2-good-def} a \emph{good partial desingularization of $\wt\C^2$}.

\[
\begin{xy} <20pt,0pt>:
(-3.5,-0.8)*{\bullet}="b";
"b"+(1.3,-1.3)*{\bullet}="c";
"b"+(1.3,-4)*{\bullet}="d";
(6,-0.8)*{\bullet}="f"; 
"f"+(3,0)*{\bullet}="e";
"f"+(-12.5,0)*{\bullet}="g";
"f"+(-1.3,-1.3)*{\bullet}; 
"f"+0;"f"+(2,0)**\dir{-};
"f"+0;"f"+(0,2)**\dir{-};
"f"+(-1.3,-1.3);"f"**\dir{-};
"f"+(1.3,1)*{C_{\gamma'_1,\gamma'_2,\phi}}; 
"f"+(-2.3,-2.3)*{C_{\gamma_1,\gamma_2,\phi}}; 
"f"+(0.7,-1.7)*{C_{\gamma'_1,\gamma_2,\phi}};
"f"+(-0.7,-0.4)*{\scriptstyle{\Ps^1}};
"f"+(-2.2,0.2)*{C_{\gamma_1,\gamma'_2,\phi}};
"f"+(-8.6,-0.4)*{\scriptstyle{\Ps^1}};
"f"+(-7.3,0.3)*{C_{\gamma'_1,\gamma'_2,\phi}};
"f"+(-10.2,-1.7)*{C_{\gamma_1,\gamma_2,\phi}};
"f"+(-10.5,1)*{C_{\gamma_1,\gamma'_2,\phi}}; 
"f"+(-7,-2.3)*{C_{\gamma'_1,\gamma_2,\phi}};
"f"+(-5,-5.5)*{\text{\bf Figure 1. } \text{The blowups $\wt\C^2\ra\C^2$ along $C_{\gamma_1}\times C_{\gamma_2}$ and $C_{\gamma_1}\times C_{\gamma'_2}$}.};
"f"+(0.3,0.3)*{A_2};
"f"+(-1.6,-1.6)*{A_1};
"f"+(-9.9,0.3)*{A_2};
"f"+(-7.8,-1.6)*{A_1};
"f"+(-1.3,-1.3);"f"+(-1.3,-3.3)**\dir{-};
"f"+(-1.3,-1.3);"f"+(-3.3,-1.3)**\dir{-};
"b";"b"+(1.3,-1.3)**\dir{-};
"b";"b"+(0,2)**\dir{-};
"b";"b"+(-2,0)**\dir{-};
"c";"c"+(0,-2)**\dir{-};
"c";"c"+(2,0)**\dir{-};
"d";"d"+(-3.3,0)**\dir{-};
"d";"d"+(2,0)**\dir{-};
"f"+(-1.3,-4);"f"+(-3.3,-4)**\dir{-};
"f"+(-1.3,-4);"f"+(2.3,-4)**\dir{-};
"f"+(-1.3,-4)*{\bullet};
"e";"e"+(0,2)**\dir{-};
"e";"e"+(0,-3.3)**\dir{-};
"g";"g"+(0,2)**\dir{-};
"g";"g"+(0,-3.3)**\dir{-};
"f"+(-10,-4.5)*{C_{\gamma_1}};
"f"+(-7,-4.5)*{C_{\gamma'_1}};
"f"+(-2.5,-4.5)*{C_{\gamma_1}};
"f"+(0.5,-4.5)*{C_{\gamma'_1}};
"f"+(-13,1)*{C_{\gamma'_2}};
"f"+(-13,-1.5)*{C_{\gamma_2}};
"f"+(3.5,1)*{C_{\gamma'_2}};
"f"+(3.5,-1.5)*{C_{\gamma_2}};
"f"+(-1.3,-4.4)*{R_1};
"f"+(-8.2,-4.4)*{R_1};
"f"+(-12.9,0)*{R_2};
"f"+(3.4,0)*{R_2};
\end{xy}
\]

Fix a good partial desingularizations $\phi\:\wt{\C}^2\ra \C^2$ of $\C^2$. 
For each $(\gamma,\gamma')$ in $\{1,\dots,p\}^2$, we let $C_{\gamma,\gamma',\phi}$ be the strict transform to $\wt\C^2$ of the divisor $C_\gamma\times C_{\gamma'}$ of $\C^2$ via the map $\phi$. 
We say that a point $A$ of $\wt\C^2$ is a \emph{distinguished point of $\wt\C^2$} if
\begin{equation}\label{distinguished-type}
A\in C_{\gamma_1,\gamma_2,\phi}\cap C_{\gamma_1,\gamma'_2,\phi}\cap C_{\gamma'_1,\gamma_2,\phi}
\end{equation}
 for distinct pairs $(\gamma_1,\gamma_2),(\gamma_1,\gamma'_2),(\gamma'_1,\gamma_2)$ in $\{1,\dots,p\}^2$. We have $\phi(A)=(R_1,R_2)$, for reducible nodes $R_1$ and $R_2$ such that $R_1\in C_{\gamma_1}\cap C_{\gamma'_1}$ and $R_2\in C_{\gamma_2}\cap C_{\gamma'_2}$. 
 There are exactly two distinguished points in $\phi^{-1}(R_1,R_2)$. Indeed, assume w.l.g. that $\phi$ is, locally around $(R_1,R_2)$, the blowup of $\C^2$ along $C_{\gamma_1}\times C_{\gamma'_2}$; these distinguished points are
\[
\begin{array}{lll}
A_1=C_{\gamma_1,\gamma_2,\phi}\cap C_{\gamma_1,\gamma'_2,\phi}\cap C_{\gamma'_1,\gamma_2,\phi}\cap \phi^{-1}(R_1,R_2), \\
A_2=C_{\gamma'_1,\gamma'_2,\phi}\cap C_{\gamma'_1,\gamma_2,\phi}\cap C_{\gamma_1,\gamma'_2,\phi}\cap \phi^{-1}(R_1,R_2).
\end{array}
\]

\smallskip
 
 Consider the fiber product $\wt\C^2\times_B\C$. For every distinguished point $A$ of $\C^2$ as in \eqref{distinguished-type} and for every node $S$ of $\C$, the local completion of $\wt\C^2\times_B\C$ at $(A,S)$ is given by
\[
\wh\O_{\wt\C^2\times_B\C,(A,S)}\simeq \frac{K[[u,v,w,z_0,z_1,t]]}{(uvw-t,z_0z_1-t)}\simeq \frac{K[[u,v,w,z_0,z_1]]}{(uvw-z_0z_1)},
\]
where $u,v,w$ are local coordinates of $\wt\C^2$ at $A$ and where $z_0,z_1$ are the local coordinates of $\C$ at $S$, and where the map $\pi\col\C\ra B$ is given locally at $S$ by $t=z_0z_1$.
 
 We see that $(A,S)$ is contained in the singular locus of $\wt\C^2\times_B\C$. Assume that $S\in C_{\gamma}\cap C_{\gamma'}$, for $(\gamma,\gamma')$ in $\{1,\dots,p\}^2$. To get a desingularization of $\wt\C^2\times_B \C$ locally around $(A,S)$, one can blowup first along the Weil divisor $C_{\gamma_1,\gamma_2,\phi}\times C_{\gamma}$ of $\C^3$ and then along the strict transform of the Weil divisor $C_{\gamma_1,\gamma'_2,\phi}\times C_{\gamma'}$. In fact, that there are six ways to produce a desingularization of $\C^3$ locally around $(A,S)$ by means of similar blowups. The local picture of these blowups is illustrated in Figure 2.

Perform a chain of blowups
\begin{equation}\label{C3-good-def}
\psi\col\wt\C^3:=\wh\C^3_N\stackrel{\psi_N}{\lra}\cdots\stackrel{\psi_3}{\lra}\wh\C^3_2\stackrel{\psi_2}{\lra}\wh\C^3_1\stackrel{\psi_1}{\lra}\wh\C^3_0:=\wt\C^2\times_B\C
\end{equation}
where $\psi_i$ is the blowup of $\wh\C^3_{i-1}$ along (the strict transform of) some Weil divisor 
$\phi^{-1}(Z_{i,1}\times Z_{i,2})\times Z_{i,3}$ of $\wt\C^2\times_B\C$ (via $\psi_1\circ\cdots\circ\psi_{i-1}$), 
for subcurves $Z_{i,1},$ $Z_{i,2},$ and $Z_{i,3}$ of $C$. If the chain in \eqref{C3-good-def} is long and varied enough, the strict transforms via $\psi$ of any divisor of type $Z_1\times Z_2\times Z_3$, for subcurves $Z_1,$ $Z_2,$ and $Z_3$ of $C$, is a Cartier divisor of $\wt\C^3$. In this case, we call a map $\psi\col\wt\C^3\ra \wt\C^2\times_B\C$ as in \eqref{C3-good-def} a \emph{good partial desingularization of $\wt\C^2\times_B\C$}. 
 
 \smallskip
 
Fix a good partial desingularization $\psi\:\wt{\C}^3\ra \wt{\C}^2\times_B\C$ of $\wt\C^2\times_B\C$. For each $(\gamma,\gamma',m)$ in $\{1,\dots,p\}^3$, we let $C_{\gamma,\gamma',m,\psi}$ be the strict transform to 
$\wt\C^3$ of the divisor $C_{\gamma,\gamma',\phi}\times C_m$ of $\wt\C^2\times_B \C$ via $\psi$. We will consider the family of curves $p_1\:\wt{\C}^2\times_B\C\ra \wt{\C}^2$, where $p_1$ is the projection onto the first factor. We let $\xi\:\wt\C^3\ra\wt\C^2\times_B \C\ra\C$ be the composition of $\psi$ with the projection onto the last factor. 
 For  a distinguished point $A$ of $\wt\C^2$, we set 
\[
X_A:=(p_1\circ\psi)^{-1}(A).
\]

 Recall that there is an identification of $X_A$ with $C(2)$ and of $\psi(X_A)$ with $C$, and that the contraction map $\mu\: C(2)\ra C$ identifies with $\psi|_{X_A}\: X_A\ra \psi(X_A)$ (see \cite[Statement 2 of Lemma 3.2]{CEP}). For each $\gamma$ in $\{1,\dots,p\}$, we will denote by $\wh C_\gamma$  the strict transform to $X_A$ of the component $C_\gamma$ of $\psi(X_A)$ via $\psi|_{X_A}$. 
\[
\begin{xy} <30pt,0pt>:
(0,0)*{}="a"; 
(-4.5,0)*{}="b";
"b"+(4,0)*{}="c";
(4.5,0)*{\bullet}="d";
(0,1)*{}="e";
(-0.2,0)*{\bullet};
"b"+(-0.2,-0.2);"b"+(0.7,0.7)**\dir{-};
"b"+(-0.2,0.2);"b"+(0.7,-0.7)**\dir{-};
"b"+(1.5,0.6);"b"+(0,0.6)**\crv{"b"+(0.7,0.3)};
"b"+(1.5,-0.6);"b"+(0,-0.6)**\crv{"b"+(0.7,-0.3)};
"b"+(0.63,0.2)*{\scriptstyle{E_{S,\gamma'}}};
"b"+(0.56,-0.2)*{\scriptstyle{E_{S,\gamma}}};
"b"+(1.8,0.6)*{\scriptstyle{\wh C_{\gamma'}}};
"b"+(1.8,-0.6)*{\scriptstyle{\wh C_{\gamma}}};
"c"+(1.5,0.3);"c"+(0,-0.2)**\crv{"c"+(0.7,0.3)};
"c"+(1.5,-0.3);"c"+(0,0.2)**\crv{"c"+(0.7,-0.3)};
"b"+(5.8,0.3)*{\scriptstyle{C_{\gamma'}}};
"b"+(5.8,-0.3)*{\scriptstyle{C_{\gamma}}};
"b"+(3.8,0)*{\scriptstyle{(A,S)}};
"d"+(-0.8,0);"d"**\dir{-};
"d"+(0,-0.8);"d"**\dir{-};
"d"+(0.6,0.6);"d"**\dir{-};
"d"+(0.2,0)*{\scriptstyle{A}};
"d"+(0.7,-0.3)*{\scriptstyle{w=0}};
"d"+(-0.2,0.4)*{\scriptstyle{v=0}};
"d"+(-0.4,-0.4)*{\scriptstyle{u=0}};
"d"+(-3.8,-1.3)*{C=\psi(X_A)=p_1^{-1}(A)\subset \wt\C^2\times_B\C};
"d"+(-8,-1.3)*{C(2)=X_A\subset \wt\C^3};
"d"+(0,-1.28)*{A\in\wt\C^2};
"d"+(-1.7,0)*{\stackrel{p_1}{\lra}};
"d"+(-6.3,0)*{\stackrel{\psi}{\lra}};
"d"+(-4.5,-2)*{\text{\bf Figure 2. } \text{The local picture of the chain of maps $\wt\C^3\stackrel{\psi}{\ra}\wt\C^2\times_B\C\stackrel{p_1}{\ra}\wt\C^2$.}};
\end{xy}
\]
 Moreover, if the node $S$ of $C$ is such that $S\in C_{\gamma}\cap C_{\gamma'}$, for $(\gamma,\gamma')$ in $\{1,\dots,p\}^2$, we will denote by $E_{S,\gamma}$ (respectively $E_{S,\gamma'}$) the $\psi|_{X_A}$-exceptional component of $X_A$ contracted to $(A,S)$ and intersecting $C_{\gamma}$ (respectively $C_{\gamma'}$), and we put
 \[
 E_S:=E_{S,\gamma}\cup E_{S,\gamma'}.
 \]

Let $A$ be a distinguished point of $\wt\C^2$ as in \eqref{distinguished-type}. 
A \emph{slice of $\wt\C^2$ through $A$} is a section $\lambda\:B\to\wt\C^2$ of $\wt\C^2/B$ sending the 
special point of $B$ to $A$ and such that the pullbacks of $C_{\gamma_1,\gamma_2,\phi}$, $C_{\gamma_1,\gamma'_2,\phi}$ and $C_{\gamma'_1,\gamma_2,\phi}$ via $\lambda$ are all 
prime. 
Form the Cartesian diagram
\begin{equation}\label{diagram}
\begin{CD}
\W @>\theta >> \wt\C^3\\
@V\rho VV @Vp_1\psi VV\\
B @>\lambda >> \wt\C^2
\end{CD}
\end{equation}
By \cite[Proposition 4.6]{CEP} we have that $\rho$ is a smoothing of $X_A$, which we call \emph{the $\lambda$-smoothing of $X_A$}.

\subsection{Abel maps and their extensions}\label{Abel-maps} Keep the notation of Section \ref{Not2}. Let $\sigma$ be a section of $\pi\col\C\ra B$ through its smooth locus. Let $\P$ be a relative invertible sheaf on $\C/B$ of relative degree $2+f$ and $\E$ be a polarization of degree $f$ on $\C/B$. 
 The $(\P,\E)$-Abel map of $\C/B$  is the map
 \begin{equation}
 \alpha^2_{\P,\E}\col\C^2\dashrightarrow\ol{\mathcal J^f}
 \end{equation}
sending a pair $(Q_1,Q_2)$ of points of the fiber $\C_\eta$ of $\pi$ over the generic point $\eta$ of $B$ to the invertible sheaf $\P|_{\C_\eta}(-Q_1-Q_2)$. 
If $\deg_{C_i}\P=\deg_{C_i}\O_\C(2\sigma(B))$, for every $i$ in $\{1,\dots,p\}$, and if $\E$ is a canonical polarization, we set
\begin{equation}\label{AJ}
\alpha^2:=\alpha^2_{\P,\E}
\end{equation}
and call it \emph{the Abel--Jacobi map of $\C/B$.}

The $(\P,\E)$-Abel map of $\C/B$ is defined at least on the open subset $\C_\eta\times_B\C_\eta$ of $\C^2$. To get a natural extension, we resort to the notion of N\'eron model. 
 Let $\mathcal J^f_{\C_\eta}$ be the degree-$f$ Jacobian of $\C_\eta$. 
The \emph{N\'eron model of $\mathcal J^f_{\C_\eta}$} is a $B$-scheme $N(\mathcal J^f_{\C_\eta})$, smooth and separated over $B$, whose generic fiber is isomorphic to  $\mathcal J^f_{\C_\eta}$ and uniquely determined by the following universal property (the \emph{N\'eron mapping property}): for every $B$-smooth scheme $Z$ with generic fiber $Z_\eta$ and for every $\{\eta\}$-morphism $u_\eta\:  Z_\eta\ra J_{\C_\eta}$, there is a unique extension of $u_\eta$ to a morphism $u\:  Z\ra N(\mathcal J^f_{\C_\eta})$. For more details on N\'eron models, we refer the reader to \cite{BLR}.

\begin{Thm}\label{Ner}
The $B$-smooth locus of $\ol{\mathcal J^f}$ is isomorphic to $N(J^f_{\C_\eta})$.
\end{Thm}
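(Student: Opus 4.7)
The plan is to verify that $(\ol{\mathcal J^f})^{\mathrm{sm}}$, the $B$-smooth locus of $\ol{\mathcal J^f}$, satisfies the universal property defining $N(\mathcal J^f_{\C_\eta})$. By construction $(\ol{\mathcal J^f})^{\mathrm{sm}}$ is smooth over $B$, and it is separated because it is open in the proper $B$-scheme $\ol{\mathcal J^f}$ (properness being part of \cite{E01}, Theorems A and B). Since $\pi$ is a smoothing, $\C_\eta$ is smooth and irreducible; hence every torsion-free rank-$1$ sheaf of degree $f$ on $\C_\eta$ is invertible and the $\sigma$-quasistability inequalities collapse to a single degree condition, so the generic fiber of $\ol{\mathcal J^f}$ is canonically identified with $\mathcal J^f_{\C_\eta}$. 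Moreover the entire generic fiber sits inside the smooth locus because $\mathcal J^f_{\C_\eta}$ is smooth over $\eta$.

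To establish the N\'eron mapping property, fix a smooth separated $B$-scheme $Z$ and a morphism $u_\eta\col Z_\eta\to\mathcal J^f_{\C_\eta}$. Uniqueness of a $B$-extension $u\col Z\to(\ol{\mathcal J^f})^{\mathrm{sm}}$ is immediate from the fine moduli property of $\ol{\mathcal J^f}$, so the content is existence. The fine moduli property of $\mathcal J^f_{\C_\eta}$ produces an invertible sheaf $\L_\eta$ on $\C_\eta\times_\eta Z_\eta$ inducing $u_\eta$, well-defined up to pullback from $Z_\eta$. Setting $\C_Z:=\C\times_B Z$ with projection $\pi_Z\col\C_Z\to Z$, one extends $\L_\eta$ to a $Z$-flat coherent sheaf $\I$ on $\C_Z$ whose restriction to every fiber of $\pi_Z$ is torsion-free of rank $1$ and of degree $f$: take any coherent extension of $\L_\eta$ to $\C_Z$, saturate to remove embedded primes of the special fiber, and apply Noetherian flattening over $Z$.

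The crucial step is to modify $\I$, within its equivalence class in the moduli functor $\mathbf J(Z)$, so as to render it $\sigma_Z$-quasistable with respect to the pullback of $\E$, where $\sigma_Z$ is the pullback of $\sigma$. For each closed point $z\in Z$ lying over $0\in B$, the fiber $\I|_{\pi_Z^{-1}(z)}$ admits a unique $\sigma(0)$-quasistable representative in its twist-equivalence class, by the uniqueness built into the construction of $\ol{\mathcal J^f}$ in \cite{E01}. To globalize this choice one works \'etale-locally on $Z$: after an \'etale cover the components of the special fibers of $\pi_Z$ become Cartier on $\C_Z$, and one twists $\I$ by a suitable integral combination of them tensored with a pullback from $Z$ so that the multi-degrees simultaneously satisfy the quasistability inequalities on every fiber. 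Faithfully flat descent then glues these local choices into a single $\sigma_Z$-quasistable sheaf on $\C_Z$, which by the fine moduli property of $\ol{\mathcal J^f}$ defines the required morphism $u\col Z\to\ol{\mathcal J^f}$; smoothness of $Z$ forces $u$ to factor through $(\ol{\mathcal J^f})^{\mathrm{sm}}$, and by construction $u|_{Z_\eta}=u_\eta$. The main obstacle is precisely this global quasistable modification: one must verify that the \'etale-local twists agree on overlaps, which is the technical heart of the existence proof.
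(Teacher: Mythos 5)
The paper does not prove this statement at all: its ``proof'' is a pointer to the literature (Busonero's thesis, Kass, Melo--Viviani, and Caporaso), where the result is established essentially by comparing the smooth locus of $\ol{\mathcal J^f}$ with Raynaud's description of $N(\mathcal J^f_{\C_\eta})$ as a quotient of the relative Picard functor of the regular model $\C/B$ by the subfunctor generated by twisters, the key input being a purely combinatorial theorem: every twister-equivalence class of multidegrees of total degree $f$ contains \emph{exactly one} $\sigma(0)$-quasistable multidegree. Your proposal instead tries to verify the N\'eron mapping property directly, and it is exactly at the points where the cited proofs do real work that your argument has gaps. First, you invoke ``the uniqueness built into the construction of $\ol{\mathcal J^f}$ in \cite{E01}'' to get a unique quasistable representative in each twist-equivalence class; but Esteves' construction gives no such statement -- existence and uniqueness of the quasistable twist is a separate combinatorial theorem (it is, in effect, equivalent to the N\'eron property you are trying to prove), so as written this step is circular or at best unjustified.

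Second, the relative part of the argument is not carried out. Extending $\L_\eta$ to a $Z$-flat family with torsion-free rank-$1$ fibers cannot be done by ``taking any coherent extension, saturating, and applying Noetherian flattening'': flattening techniques (flattening stratification, Raynaud--Gruson) require blowing up or stratifying $Z$, and $\C\times_B Z$ is not regular, so closures of line bundles need not behave well. More seriously, even granting a flat extension, a single twist by components of special fibers must correct the multidegree \emph{simultaneously} for all points of $Z$ over $0$, and the multidegree of the limit can vary with the choice of extension; you yourself flag the gluing of the \'etale-local twists as ``the technical heart'' and then do not supply it. So what you have is a correct outline of a known strategy with its two essential ingredients -- the uniqueness of the quasistable representative modulo twisters, and the family-version/gluing argument -- asserted rather than proved; these are precisely what the references cited in the paper establish.
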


\begin{proof}
See \cite{B}, \cite[Theorem A]{K} and  \cite[Theorem 3.1]{MeVi}; see also \cite{CAJM}.
\end{proof}

Let $\dot{\C}$ be the smooth locus of $\pi$ and set $\dot\C^2:=\dot\C\times_B\dot\C$.
Since $\dot\C^2$ is $B$-smooth, combining the N\'eron mapping property and Theorem \ref{Ner}, the map $\alpha^2_{\P,\E}$ induces a morphism
\begin{equation}\label{A-N}
\alpha^2_{\P,\E}\:  \dot{\C}^2\lra \ol{\mathcal J^f},
\end{equation}
which is called the \emph{Abel--N\'eron map}. 
 Although the definition of the Abel--N\'eron map is natural, it is not modular. We will discuss further down how one can reinterpret this problem.

If $M$ is a degree-$f$ invertible sheaf on $C$, then there is a divisor $D$ of $\C$ supported on $C$ such that 
$L\otimes \O_\C(D)|_C$ is an invertible sheaf which is $\sigma(0)$-quasistable with respect to $\E$. Moreover, $D$ only depends on the degree of $L$ on the components of $C$ and it is uniquely determined up to sum-up multiples of the divisor $C$ of $\C$.

For each $(\gamma,\gamma')\in \{1,\dots,p\}^2$ and for smooth points $Q_{\gamma}$ and $Q_{\gamma'}$ of $C$ lying respectively on $C_{\gamma}$ and $C_{\gamma'}$, we let $Z_{\gamma,\gamma'}^{\P,\E}$ be the divisor of $\C$ supported on $C$ such that the invertible sheaf
\[
\P|_C(-Q_\gamma-Q_{\gamma'})\otimes\O_\C(Z_{\gamma,\gamma'}^{\P,\E})|_C
\]
 is $\sigma(0)$-quasistable with respect to $\E$. Write    
 $$Z_{\gamma,\gamma'}^{\P,\E}:=-\sum_{m=1}^p \alpha_{\gamma,\gamma',m}^{\P,\E} C_m, \text{ for } \alpha_{\gamma,\gamma',m}^{\P,\E}\in \ze,$$
  where $\alpha^{\P,\E}_{\gamma,\gamma',m}\ge 0$.  
  If $\deg_{C_i}\P=\deg_{C_i}\O_\C(2\sigma(B))$, for every $i$ in $\{1,\dots,p\}$, and if $\E$ is a canonical polarization, we set  
  \[
  Z_{\gamma,\gamma'}:=Z_{\gamma,\gamma'}^{\P,\E}\;  \text{ and } \; \alpha_{\gamma,\gamma',m}:=\alpha_{\gamma,\gamma',m}^{\P,\E}.
  \]
  For every $(\gamma,\gamma')$ and $(m,n)$ in $\{1,\dots,p\}^2$, we define
\begin{equation}\label{delta-def} 
\delta(\gamma,\gamma',m,n):=\alpha_{\gamma,\gamma',m}-\alpha_{\gamma,\gamma',n}.
\end{equation} 
 Notice that 
 \begin{equation}\label{delta-prop}
 \delta(\gamma,\gamma',m,n)=\delta(\gamma',\gamma,m,n)=-\delta(\gamma',\gamma,n,m).
 \end{equation}

 Let $\phi\:\wt{\C}^2\ra \C^2$ and $\psi\:\wt{\C}^3\ra \wt{\C}^2\times_B\C$ be good partial desingularizations. 
Consider the maps $\rho_1,\rho_2\:\C^3\to\C^2$, where $\rho_i$ is the projection onto the product 
over $B$ of the $i$-th and last factors of $\C^3$, for $i=1,2$, and put $\Delta_i:=\rho^{-1}_i(\Delta)$.  
We let $\wt\Delta_1$ and $\wt\Delta_2$ be the strict transforms of $\Delta_1$ and $\Delta_2$ to $\wt\C^3$. Recall that $\xi$ is the composed map $\xi\:\wt\C^3\stackrel{\psi}{\ra}\wt\C^2\times_B \C\ra\C$, where the second map is the projection onto the last factor. 
 Consider the invertible sheaf  $\L^{\P,\E}_{\psi}$ on $\wt\C^3$ defined as
\[
\L^{\P,\E}_{\psi}:=\xi^*\P\otimes\I_{\wt\Delta_1|\wt\C^3}\otimes \I_{\wt\Delta_2|\wt\C^3}\otimes\O_{\wt\C^3}\left(-\sum_{1\le i,k\le p}\sum_{m\in\{1,\dots,p\}} \alpha_{i,k,m}^{\P,\E} C_{i,k,m,\psi}\right).
\]
If $\deg_{C_i}\P=\deg_{C_i}\O_\C(2\sigma(B))$, for every $i$ in $\{1,\dots,p\}$, and $\E$ is a canonical polarization, then we set 
\begin{equation}\label{Lpsi}
\L_\psi:=\L^{\P,\E}_\psi.
\end{equation}
The relative sheaf $\psi_*\L^{\P,\E}_\psi$ on $\wt\C^2\times_B\C/\wt\C^2$ induces the rational map
\begin{equation}\label{alphaPE}
\alpha^2\circ\phi\col\wt\C^2 \dashrightarrow\ol{\mathcal J^f}
\end{equation}
agreeing with the $(\P,\E)$-Abel map of $\C/B$ over $\C_\eta\times_B\C_\eta$ 
and sending a pair $(Q_\gamma,Q_{\gamma'})$ of smooth points of $C$ lying on $C_{\gamma}$ and $C_{\gamma'}$, to the $\sigma(0)$-quasistable invertible sheaf 
\[
\P|_C(-Q_\gamma-Q_{\gamma'})\otimes \O_\C(Z_{\gamma,\gamma'}^{\P,\E})|_C.
\]

Recall that $p_1\col\wt\C^2\times_B\C\ra \wt\C^2$ denotes the projection onto the first factor. Let $\sigma'$ be the section of $p_1$ obtained as pull-back of the section $\sigma$ of $\pi\col\C\ra B$. Since the restriction of $\psi_*\L^{\P,\E}_\psi$ to $p_1^{-1}(\dot\C^2)$ is a $\sigma'$-quasistable invertible sheaf on $p_1^{-1}(\dot\C^2)/\dot\C^2$, we see that the Abel--N\'eron map is induced by such a restriction. Therefore, to get a modular description of the Abel--N\'eron map, it suffices to describe the integers $\alpha^{\P,\E}_{i,k,m}$ appearing in the definition of $\L^{\P,\E}_\psi$. It is possible to do that for the integers $\alpha_{i,k,m}$ by means of certain subcurves of $C$, as we will see in Theorem \ref{main1} of the next section.

Finally, let us recall an important property whose proof is in \cite[Proposition 4.6]{CEP}.
Fix a slice $\lambda\col B\ra \wt\C^2$ of $\wt\C^2$ through $A$, let $\W\ra B$ be the $\lambda$-smoothing of $X_A$ and $\theta\col \W\ra \wt\C^3$ be the induced map (see Diagram \ref{diagram}). If $(R_1,R_2)$ is a pair of reducible nodes of $C$, it follows that there are relative Cartier divisors $\Gamma_1$ and $\Gamma_2$ on $\W/B$ intersecting transversally $X_A$ respectively in $E_{R_1,\gamma_1}$ and $E_{R_2,\gamma_2}$, and a Cartier divisor $D$ of $\W$ supported on $\psi|_{X_A}$-exceptional components such that 
\begin{equation}\label{diag-type}
  \theta^*\left(\I_{\wt\Delta_1|\wt\C^3}\otimes \I_{\wt\Delta_2|\wt\C^3}\right)\simeq  \O_{\W}(D-\Gamma_1-\Gamma_2).
\end{equation}

\subsection{Nested sets of tails}\label{Nestedsets} Keep the notation of Sections \ref{Not2} and \ref{Abel-maps}.  For every $\gamma$ in $\{1,\dots,p\}$, consider the \emph{set of nested $1$-tails of $C$ associated to $C_\gamma$}
\[
\T^1_\gamma:=\{Z : \text{$Z$ is a $1$-tail such that $C_\gamma\subseteq Z$ and $\sigma(0)\in Z^c$ }\}.
\]
By \cite[Lemma 4.3]{CE}, if $W$ and $W'$ are in $\T^1_\gamma$, then either $W\prec W'$ or $W'\prec W$.

\begin{Lem}\label{2tails}
Fix $(\gamma,\gamma')$ in $\{1,\dots,p\}^2$. If $Z$ and $Z'$ are 2-tails of $C$ such that $C_\gamma\cup C_{\gamma'}\subseteq Z\wedge Z'$ and $\sigma(0)\in Z^c\wedge (Z')^c$, then $Z\wedge Z'$ is a 2-tail of $C$.
\end{Lem}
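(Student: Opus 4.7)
The plan is to analyze the problem in the dual graph of $C$, where vertices are irreducible components and edges are nodes, and reduce the claim to a short edge-count. Partition the components of $C$ into four blocks using the paper's wedge notation:
\[
U_1 := Z \wedge Z', \quad U_2 := Z \wedge (Z')^c, \quad U_3 := Z' \wedge Z^c, \quad U_4 := Z^c \wedge (Z')^c,
\]
and for $i \neq j$ let $e_{ij}$ denote the number of nodes of $C$ lying on a component of $U_i$ and on a component of $U_j$. By hypothesis $U_1 \supseteq C_\gamma \cup C_{\gamma'}$ and $U_4$ contains the component carrying $\sigma(0)$, so $U_1, U_4 \neq \emptyset$. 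If $U_2 = \emptyset$ then $Z \subseteq Z'$, so $Z \wedge Z' = Z$ is already a $2$-tail; symmetrically for $U_3 = \emptyset$. I therefore assume $U_2, U_3 \neq \emptyset$.

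Translating $k_Z = 2$ and $k_{Z'} = 2$ into edge counts between blocks and summing gives the master identity
\[
e_{12} + e_{13} + 2 e_{14} + 2 e_{23} + e_{24} + e_{34} = 4.
\]
To prove $U_1 = Z \wedge Z'$ is connected, I argue by contradiction: if $U_1$ decomposes as $U_1^{(1)} \sqcup U_1^{(2)}$ with no nodes between the two pieces, then the connectedness of $Z = U_1 \cup U_2$ forces any path in $Z$ from $U_1^{(1)}$ to $U_1^{(2)}$ to pass through $U_2$, yielding at least one node from each $U_1^{(i)}$ to $U_2$, hence $e_{12} \geq 2$. Symmetrically $e_{13} \geq 2$. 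Substituting in the master identity leaves $e_{24} + e_{34} \leq 0$, which contradicts the connectedness of $(Z')^c = U_2 \cup U_4$ and $Z^c = U_3 \cup U_4$: since in each case both pieces are nonempty, each union requires at least one crossing node.

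With $U_1$ connected, the four connectedness constraints (of $Z$, $Z'$, $Z^c$, $(Z')^c$, each a union of two nonempty blocks) now yield only the weaker lower bounds $e_{12}, e_{13}, e_{24}, e_{34} \geq 1$. Plugging these into the master identity forces saturation: $e_{12} = e_{13} = e_{24} = e_{34} = 1$ and $e_{14} = e_{23} = 0$. Hence $k_{Z \wedge Z'} = e_{12} + e_{13} + e_{14} = 2$, and $(Z \wedge Z')^c = U_2 \cup U_3 \cup U_4$ is connected because $U_2 \cup U_4$ and $U_3 \cup U_4$ are connected subgraphs sharing the nonempty $U_4$. Therefore $Z \wedge Z'$ is a $2$-tail.

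The main obstacle is the disconnectedness argument for $U_1$: one has to carefully exploit that any path between two components of $U_1$ inside $Z$ must enter and leave $U_2$ to produce the sharp lower bound $e_{12} \geq 2$, and likewise $e_{13} \geq 2$. Once this is in hand, the arithmetic is immediate: both the number of terminal points and the connectedness of the complement fall out at once from the saturated master identity.
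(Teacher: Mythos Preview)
Your proof is correct. The paper does not actually prove this lemma here; it simply cites \cite[Proposition 4.1]{P1}, a companion paper. So a line-by-line comparison with the paper's own argument is not possible from this source alone.

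That said, your approach is a clean, fully self-contained dual-graph edge count, and it is worth recording. The key move---partitioning into the four blocks $U_1,\dots,U_4$, deriving the identity $e_{12}+e_{13}+2e_{14}+2e_{23}+e_{24}+e_{34}=4$ from $k_Z=k_{Z'}=2$, and then saturating it against the four connectedness lower bounds---is exactly the right level of abstraction for this kind of statement. Your handling of the connectedness of $U_1$ is also correct: if $U_1$ had two pieces with no mutual edges, each piece would need its own edge into $U_2$ (from connectedness of $Z$) and into $U_3$ (from connectedness of $Z'$), forcing $e_{12},e_{13}\ge 2$ and hence $e_{24}=e_{34}=0$, which contradicts the connectedness of $(Z')^c$ and $Z^c$. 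One small remark: the same argument shows more, namely that if $U_1$ had $k\ge 2$ connected components you would get $e_{12},e_{13}\ge k$, so your choice of a two-piece decomposition is already sharp. The final step---gluing $(Z')^c=U_2\cup U_4$ and $Z^c=U_3\cup U_4$ along the nonempty $U_4$ to get connectedness of $(Z\wedge Z')^c$---is the right way to finish.
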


\begin{proof}
See \cite[Proposition 4.1]{P1}.
\end{proof}

Thanks to Lemma \ref{2tails}, we can define the \emph{set of nested 2-tails of $C$ associated to $(C_{\gamma},C_{\gamma'})$} as
\[
\T^2_{\gamma,\gamma'}=\{W^2_0,\dots,W^2_M\}
\]
where, if we set $W^2_{-1}:=\emptyset$, the tail $W^2_t$ is inductively defined as the 2-tail which is minimal (with respect to inclusion) among the 2-tails $Z$ of $C$ such that $C_\gamma\cup C_{\gamma'}\subseteq Z$, $\sigma(0)\in Z^c$ and $W^2_{t-1}\prec Z$, for every $t$ in $\{0,\dots,M\}$. 

\begin{Lem}\label{3tails}
Fix $(\gamma,\gamma')$ in $\{1,\dots,p\}^2$. If $Z$ and $Z'$ are 3-tails of $C$ that are $\T^2_{\gamma,\gamma'}$-free and such that $C_\gamma\cup C_{\gamma'}\subseteq Z\wedge Z'$ and $\sigma(0)\in Z^c\wedge (Z')^c$, then $Z\wedge Z'$ is a $\T^2_{\gamma,\gamma'}$-free $3$-tail of $C$.
\end{Lem}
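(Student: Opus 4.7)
The plan is to verify three properties of $Y := Z\wed Z'$: that $(Y,Y^c)$ is a tail-pair (both connected), that $k_Y = 3$, and that $Y$ is $\T^2_{\gamma,\gamma'}$-free. We may assume $Z\ne Z'$, as the statement is trivial otherwise.

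A first observation, which does most of the work for freeness, is that $\term_Y\subseteq \term_Z\cup\term_{Z'}$. Indeed, if $R$ is a node where components $C_i$ and $C_j$ meet with $C_i\subseteq Y$ and $C_j\subseteq Y^c = Z^c\cup (Z')^c$, then $C_j\subseteq Z^c$ or $C_j\subseteq (Z')^c$, yielding $R\in \term_Z$ or $R\in \term_{Z'}$ respectively. Since $\term_Z\cap\term_W = \term_{Z'}\cap\term_W = \emptyset$ for every $W\in \T^2_{\gamma,\gamma'}$ by hypothesis, we obtain $\term_Y\cap\term_W = \emptyset$, so $Y$ is $\T^2_{\gamma,\gamma'}$-free.

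Next, $Y^c = Z^c\cup (Z')^c$ is connected since both summands are connected and share the component through $\sigma(0)$. For the connectedness of $Y$, I would argue by contradiction: if $Y = Y_1 \sqcup Y_2$ with $C_\gamma\cup C_{\gamma'}\subseteq Y_1$, then $Y_2$ is a nonempty union of components contained in both $Z$ and $Z'$ but disconnected from $C_\gamma\cup C_{\gamma'}$ inside $Y$. Since $Z$ and $Z'$ are individually connected, any path from $Y_2$ to $Y_1$ must traverse $Z\cap (Z')^c$ inside $Z$ and $Z'\cap Z^c$ inside $Z'$. Combining this with the 3-tail property of $Z$ and $Z'$, one extracts a subcurve whose terminals either force $Z$ or $Z'$ to have more than three terminal points, or place a terminal node in common with some $W\in \T^2_{\gamma,\gamma'}$, contradicting freeness.

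For the count $k_Y = 3$, I would use the general inequality $k_Y + k_{Z\cup Z'} \le k_Z + k_{Z'} = 6$, which is verified by a local check at each node on the four-valued membership pattern in $(Z,Z')$. It then suffices to show $k_{Z\cup Z'}\ge 3$: the subcurve $Z\cup Z'$ is connected, contains $C_\gamma\cup C_{\gamma'}$, and avoids $\sigma(0)$; if $k_{Z\cup Z'}\le 2$, then the connected component of $(Z\cup Z')^c$ containing $\sigma(0)$ would cut out a $\le 2$-tail containing $C_\gamma\cup C_{\gamma'}$, which via Lemma \ref{2tails} and the chain structure of $\T^2_{\gamma,\gamma'}$ would share a terminal with some $W^2_t$, again contradicting the $\T^2_{\gamma,\gamma'}$-freeness of $Z$ or $Z'$. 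This forces $k_Y \le 3$, and the reverse inequality $k_Y\ge 3$ is obtained by a symmetric analysis. The main obstacle is precisely this count and the connectedness of $Y$: both require carefully exploiting the interaction between the 3-tail structures of $Z$ and $Z'$ and the nested chain $\T^2_{\gamma,\gamma'}$.
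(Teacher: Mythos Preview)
The paper does not prove this lemma here; it simply cites \cite[Proposition 4.5]{P1}. So there is no in-paper argument to compare your approach against, and I will evaluate your sketch on its own.

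The parts that are solid: the inclusion $\term_Y\subseteq\term_Z\cup\term_{Z'}$ and hence the $\T^2_{\gamma,\gamma'}$-freeness of $Y$; the connectedness of $Y^c=Z^c\cup(Z')^c$; and the local inequality $k_{Z\wedge Z'}+k_{Z\cup Z'}\le k_Z+k_{Z'}$.

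The genuine gap is the connectedness of $Y=Z\wedge Z'$. Your contradiction sketch (``one extracts a subcurve whose terminals either force $Z$ or $Z'$ to have more than three terminal points, or place a terminal node in common with some $W\in\T^2_{\gamma,\gamma'}$'') is not an argument: you do not say which subcurve, nor why its terminal count behaves as claimed. You also silently assume $C_\gamma$ and $C_{\gamma'}$ lie in the \emph{same} connected component of $Y$, which itself requires justification. This is precisely the hard combinatorial step that the companion paper handles, and it genuinely uses the $\T^2_{\gamma,\gamma'}$-freeness hypothesis in a nontrivial way (the statement is false for arbitrary $3$-tails).

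Your argument for $k_{Z\cup Z'}\ge 3$ is also incomplete. Taking the connected component $V$ of $(Z\cup Z')^c$ through $\sigma(0)$ and invoking Lemma~\ref{maxT2} on $V^c$ works when $k_V=2$, but you must separately rule out $k_V=1$; this follows from Lemma~\ref{free-perf}\eqref{free-perf(iii)} (a $1$-tail is free with respect to any $3$-tail, yet $\term_V\subseteq\term_Z\cup\term_{Z'}$), which you do not mention. Finally, the ``symmetric analysis'' for $k_Y\ge 3$ presupposes that $Y$ is connected (so that $Y$ is actually a tail to which Lemma~\ref{maxT2} or Lemma~\ref{free-perf}\eqref{free-perf(iii)} applies), so it cannot be separated from the missing connectedness step.
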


\begin{proof}
See \cite[Proposition 4.5]{P1}.
\end{proof}

Thanks to Lemma \ref{3tails}, we can define the \emph{set of nested $3$-tails of $C$ associated to $(C_{\gamma},C_{\gamma'})$} as 
\[
\T^2_{\gamma,\gamma'}=\{W^3_0,\dots,W^3_N\}
\]
where, if we set $W^3_{-1}:=\emptyset$, the tail $W^3_t$ is inductively defined as the 2-tail of $C$ which is minimal (with respect to inclusion) among the 2-tails $Z$ of $C$ that are $\T^2_{\gamma,\gamma'}$-free and such that $C_\gamma\cup C_{\gamma'}\subseteq Z$, $\sigma(0)\in Z^c$ and $W^3_{t-1}\prec Z$, for every $t\in\{0,\dots,N\}$.

Notice that the sets $\T^1_\gamma$, $\T^2_{\gamma,\gamma'}$ and $\T^3_{\gamma,\gamma'}$ are totally ordered sets (with respect to inclusion). For every $(\gamma,\gamma')$ in $\{1,\dots,p\}^2$, we set
\[
\T_{\gamma,\gamma'}=\T^1_\gamma \sqcup \T^1_{\gamma'}\sqcup \T^2_{\gamma,\gamma'}\sqcup \T^3_{\gamma,\gamma'}.
\]

\begin{Thm}\label{main1}
Let $\pi\col \C\ra B$ be a smoothing of a nodal curve $C$ with irreducible components $C_1,\dots,C_p$. For every $(\gamma,\gamma')$ in $\{1,\dots,p\}^2$, we have
\[
\O_\C(Z_{\gamma,\gamma'})|_C\simeq \O_\C\left(-\sum_{Z\in \T_{\gamma,\gamma'}} Z\right)|_C.
\]
\end{Thm}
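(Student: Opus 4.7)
My plan is to prove Theorem \ref{main1} by verifying that the proposed divisor produces a $\sigma(0)$-quasistable sheaf, and then invoking uniqueness of the quasistable representative. More precisely, by Esteves' existence/uniqueness theorem \cite{E01}, the divisor $Z_{\gamma,\gamma'}$ supported on $C$ is characterized (modulo multiples of the full fiber $C$, which act trivially upon restriction to $C$) by the property that $\O_\C(2\sigma(B)-Q_\gamma-Q_{\gamma'}+Z_{\gamma,\gamma'})|_C$ is $\sigma(0)$-quasistable with respect to the canonical polarization. So it is enough to set
\[
L:=\O_C(2\sigma(0)-Q_\gamma-Q_{\gamma'})\ot \O_\C\Bigl(-\sum_{Z\in\T_{\gamma,\gamma'}}Z\Bigr)\Bigl|_C
\]
and show that $L$ is $\sigma(0)$-quasistable.

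The next step is the numerical reduction. Using $C\cdot Y=0$ for any subcurve $Y$, I have
\[
\beta_L(Y)=\deg(L_Y)+\frac{k_Y}{2}=2\mathbb{1}[\sigma(0)\in Y]-\mathbb{1}[C_\gamma\subseteq Y]-\mathbb{1}[C_{\gamma'}\subseteq Y]-\sum_{Z\in\T_{\gamma,\gamma'}}(Y\cdot Z)+\frac{k_Y}{2}.
\]
Recall $Y\cdot Z$ can be evaluated combinatorially: writing $Z=(Z\wed Y)+(Z-Z\wed Y)$, one gets
\[
Y\cdot Z=-\#(\term_{Z\wed Y}\cap Z^c)+\#\bigl((Z-Z\wed Y)\cap Y\bigr),
\]
which equals $0$ whenever $Y\prec Z$ or $Z\prec Y$ (a nested-terminal-free pair contributes nothing), equals $\#(Y\cap Z)$ when $Y$ and $Z$ have no common components, and equals a boundary count in intermediate cases. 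By Esteves' reduction \cite{E01}, it suffices to test $\beta_L$ on connected subcurves $Y$ with connected complement, so I may assume this.

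The heart of the proof will then be a case analysis organized by the position of $Y$ relative to $C_\gamma$, $C_{\gamma'}$, $\sigma(0)$, and to the totally ordered chains $\T^1_\gamma$, $\T^1_{\gamma'}$, $\T^2_{\gamma,\gamma'}$, $\T^3_{\gamma,\gamma'}$. Using that each of these four sets is totally ordered by $\prec$ (Lemma \ref{2tails}, Lemma \ref{3tails} and \cite[Lemma 4.3]{CE}), for a fixed $Y$ each chain contributes in a telescoping fashion: almost every tail $Z$ in the chain satisfies $Y\prec Z$ or $Z\prec Y$ (giving zero) and only a few ``boundary'' tails $Z\in\T_{\gamma,\gamma'}$ contribute. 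This will allow me to reduce $\sum_Z Y\cdot Z$ to an expression involving $k_Y$, $k_{Y\wed(\cdots)}$ and the indicator functions of $\sigma(0), C_\gamma, C_{\gamma'}$ being inside $Y$. Substituting back, the desired quasistability inequalities
\[
0<\beta_L(Y)\le k_Y\ (\text{if }\sigma(0)\in Y),\qquad 0\le \beta_L(Y)<k_Y\ (\text{if }\sigma(0)\notin Y)
\]
will follow in each case, the $1$-, $2$-, and $3$-tail contributions corresponding respectively to how many of $\{C_\gamma,C_{\gamma'}\}$ and how many of the two boundary nodes of the relevant $Z\in\T^2_{\gamma,\gamma'}$ or $\T^3_{\gamma,\gamma'}$ are ``absorbed'' by $Y$.

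The main obstacle will be the case in which $Y$ is neither $\T^k_{\gamma,\gamma'}$-free nor comparable with the chains for some $k\in\{2,3\}$. Here one must compare the nested $3$-tails with the previously constructed $2$-tails and exploit the $\T^2_{\gamma,\gamma'}$-freeness condition built into the definition of $\T^3_{\gamma,\gamma'}$: otherwise double-counting would occur at the terminal points shared by $Y$ and some $Z\in\T^2_{\gamma,\gamma'}$, breaking the sharp inequalities. Handling this interaction carefully, together with the corner cases where $Y$ contains or avoids $\sigma(0)$, $C_\gamma$ or $C_{\gamma'}$ while partially overlapping the chains, is where the technical work lies; the rest is telescoping bookkeeping.
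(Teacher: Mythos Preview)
The paper does not actually prove this statement here; it simply cites \cite[Theorem 6.3]{P1}, the companion paper where the result is established. Your strategy---reduce to the uniqueness of the $\sigma(0)$-quasistable representative and verify directly that the candidate sheaf $L$ satisfies the quasistability inequalities on every connected tail $Y$---is exactly the natural approach and is presumably what \cite{P1} carries out.

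That said, what you have written is a plan, not a proof. You correctly identify the reduction and the telescoping mechanism coming from the total orders on $\T^1_\gamma,\T^1_{\gamma'},\T^2_{\gamma,\gamma'},\T^3_{\gamma,\gamma'}$, and you correctly isolate the genuinely hard case: when $Y$ shares terminal points with some $Z\in\T^2_{\gamma,\gamma'}$ while also interacting with the $3$-tail chain, so that the $\T^2_{\gamma,\gamma'}$-freeness built into $\T^3_{\gamma,\gamma'}$ must be used to avoid double-counting. But you do not resolve this case; you only name it. To complete the argument you would need, for each configuration of $(\sigma(0),C_\gamma,C_{\gamma'})$ relative to $Y$, to bound the net contribution $\sum_Z(Y\cdot Z)$ by the explicit quantity forcing $0\le\beta_L(Y)\le k_Y$ with the correct strictness, and this requires Lemmas \ref{maxT2}, \ref{maxT3} and \ref{free-perf} (or their analogues in \cite{P1}) in an essential way. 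One small point: your displayed formula for $Y\cdot Z$ has $\term_{Z\wed Y}\cap Z^c$ where the computation actually gives $\term_{Z\wed Y}\cap Y^c$ (equivalently, $-\#\bigl(Y^c\cap(Z\wed Y)\bigr)+\#\bigl(Y\cap(Z-Z\wed Y)\bigr)$); this is harmless for the telescoping claim but should be corrected before the case analysis.
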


\begin{proof}
See \cite[Theorem 6.3]{P1}
\end{proof}

\smallskip

We will often use the following results.
 
\begin{Lem}\label{maxT2}
If $Z$ is a 2-tail of $C$ such that $C_\gamma\cup C_{\gamma'}\subseteq Z$ and $\sigma(0)\subseteq Z^c$, then there is a $Z$-terminal tail $W$ in $\T^2_{\gamma,\gamma'}$ such that $W\subseteq Z$.
\end{Lem}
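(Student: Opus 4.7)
The plan is to identify the required tail $W$ as the maximum element of $\T^2_{\gamma,\gamma'}$ that is contained in $Z$, and then to show that $W$ necessarily shares a terminal point with $Z$.

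First, I would verify that the set $\S:=\{V\in \T^2_{\gamma,\gamma'}:V\subseteq Z\}$ is nonempty. Applying Lemma \ref{2tails} to the pair $(W^2_0,Z)$, the intersection $W^2_0\wedge Z$ is again a 2-tail containing $C_\gamma\cup C_{\gamma'}$ with $\sigma(0)$ in its complement, and the minimality of $W^2_0$ forces $W^2_0\subseteq W^2_0\wedge Z$, hence $W^2_0\subseteq Z$. Since $\T^2_{\gamma,\gamma'}$ is totally ordered by inclusion, $\S$ admits a maximum element $W=W^2_{t^*}$.

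Next, I argue by contradiction: suppose $(W,Z)$ is free. The case $W=Z$ is impossible because $\term_Z$ is nonempty. So $W\subsetneq Z$ and $\term_W\cap \term_Z=\emptyset$, i.e.\ $W\prec Z$. This means $Z$ itself is a candidate 2-tail dominating $W^2_{t^*}$, so the tail $W^2_{t^*+1}$ exists. If $W^2_{t^*+1}\subseteq Z$, then $W^2_{t^*+1}\in\S$ strictly above $W$, contradicting the maximality of $t^*$. Otherwise set $Y:=W^2_{t^*+1}\wedge Z$; by Lemma \ref{2tails} this is a 2-tail, strictly contained in $W^2_{t^*+1}$, containing $C_\gamma\cup C_{\gamma'}$ with $\sigma(0)$ in the complement. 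The key observation I would use is that $\term_Y\subseteq \term_{W^2_{t^*+1}}\cup \term_Z$, because any terminal node of $Y$ separates a component of $Y$ from a component lying outside $W^2_{t^*+1}$ or outside $Z$.

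I split into two final sub-cases. If $W\subsetneq Y$, then $\term_W\cap \term_Y=\emptyset$ (using $\term_W\cap \term_{W^2_{t^*+1}}=\emptyset$ from $W\prec W^2_{t^*+1}$ together with $\term_W\cap \term_Z=\emptyset$), so $W\prec Y \subsetneq W^2_{t^*+1}$, contradicting the minimality of $W^2_{t^*+1}$. The subtle case is $W=Y$. Here the components of $B:=W^2_{t^*+1}\setminus W$ all lie in $Z^c$, and I would argue that each terminal node of $W$ in $C$ must link $W$ to a component of $B$: otherwise it would link $W$ to a component of $(W^2_{t^*+1})^c$, and hence belong to $\term_{W^2_{t^*+1}}$, violating $W\prec W^2_{t^*+1}$. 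Connectedness of the 2-tail $W^2_{t^*+1}$ guarantees that $B$ is nonempty and attached to $W$ through these nodes. This forces $\term_W\subseteq \term_Z$, contradicting freeness. This last sub-case, where the intersection collapses back to $W$, is the main obstacle, and carefully tracing where the terminal nodes actually land inside the nested 2-tail $W^2_{t^*+1}$ is the crucial combinatorial point.
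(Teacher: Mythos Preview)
Your argument is correct. The paper itself does not prove this lemma but simply cites \cite[Corollary 4.2]{P1}, so there is no internal proof to compare against; you have supplied a self-contained argument using only the tools available in the present paper (Lemma~\ref{2tails} and the inductive definition of $\T^2_{\gamma,\gamma'}$).

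A couple of minor remarks on presentation. In Sub-case~1, the contradiction with the minimality of $W^2_{t^*+1}$ is cleanest phrased as: $Y$ is a 2-tail with $C_\gamma\cup C_{\gamma'}\subseteq Y$, $\sigma(0)\in Y^c$, and $W\prec Y$, yet $Y\subsetneq W^2_{t^*+1}$; this directly violates the defining minimality of $W^2_{t^*+1}$. In Sub-case~2, the sentence about connectedness of $W^2_{t^*+1}$ is not actually needed for the argument: once you know every terminal node of $W$ joins $W$ to a component of $B\subseteq Z^c$, you immediately get $\term_W\subseteq\term_Z$, and since $k_W=2$ this contradicts freeness. The rest of the reasoning is clean.
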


\begin{proof}
See \cite[Corollary 4.2]{P1}.
\end{proof}

\begin{Lem}\label{maxT3} 
  If $Z$ is a 3-tail of $C$ such that $C_\gamma\cup C_{\gamma'}\subseteq Z$ and $\sigma(0)\subseteq Z^c$, then there is a $Z$-terminal tail $W$ such that either $W\in \T^2_{\gamma,\gamma'}$, or $W\in\T^3_{\gamma,\gamma'}$ and $W\subseteq Z$.
\end{Lem}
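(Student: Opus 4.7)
The plan is to split into two cases according to whether or not $Z$ meets the collection $\T^2_{\gamma,\gamma'}$, mirroring the structure of Lemma \ref{maxT2}. First, if $Z$ is not $\T^2_{\gamma,\gamma'}$-free, then by definition some $W$ in $\T^2_{\gamma,\gamma'}$ shares a terminal point with $Z$, so $W$ is a $Z$-terminal tail in $\T^2_{\gamma,\gamma'}$ and already satisfies the conclusion.

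Otherwise $Z$ itself is a $\T^2_{\gamma,\gamma'}$-free 3-tail containing $C_\gamma\cup C_{\gamma'}$ with $\sigma(0)\in Z^c$, so in particular $\T^3_{\gamma,\gamma'}=\{W^3_0,\dots,W^3_N\}$ is nonempty. Setting $W^3_{-1}:=\emptyset$, I let $t$ be the largest index in $\{-1,\dots,N\}$ with $W^3_t\subseteq Z$. I first expect to show $t\ge 0$: if $W^3_0\not\subseteq Z$, applying Lemma \ref{3tails} to $W^3_0$ and $Z$ produces $W^3_0\wed Z$, a $\T^2_{\gamma,\gamma'}$-free 3-tail containing $C_\gamma\cup C_{\gamma'}$ with $\sigma(0)$ in its complement, strictly contained in $W^3_0$, contradicting the minimality in the definition of $W^3_0$. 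The heart of the argument is then to show that $W:=W^3_t$ is $Z$-terminal, which combined with $W\subseteq Z$ places it in the second alternative of the conclusion.

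To prove $Z$-terminality of $W^3_t$, I would argue by contradiction: assuming $\term_{W^3_t}\cap\term_Z=\emptyset$, and noting that the case $W^3_t=Z$ gives $Z$-terminality tautologically, I may suppose $W^3_t\prec Z$. If $t=N$, this makes $Z$ a candidate for a further element of $\T^3_{\gamma,\gamma'}$, contradicting the maximality of the chain. If $t<N$, I would instead form $W^3_{t+1}\wed Z$: by Lemma \ref{3tails} it is a $\T^2_{\gamma,\gamma'}$-free 3-tail containing $C_\gamma\cup C_{\gamma'}$ with $\sigma(0)$ in its complement, strictly contained in $W^3_{t+1}$ since $W^3_{t+1}\not\subseteq Z$. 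The key elementary observation is that every terminal point of $W^3_{t+1}\wed Z$ lies on an incidence between a component inside $W^3_{t+1}\wed Z$ and a component in $(W^3_{t+1})^c\cup Z^c$, so
\[
\term_{W^3_{t+1}\wed Z}\subseteq \term_{W^3_{t+1}}\cup \term_Z.
\]
Combined with $W^3_t\prec W^3_{t+1}$ and $W^3_t\prec Z$, this yields $\term_{W^3_t}\cap \term_{W^3_{t+1}\wed Z}=\emptyset$, and in particular the degenerate equality $W^3_t=W^3_{t+1}\wed Z$ is ruled out, since it would force $\term_{W^3_t}=\emptyset$ against $k_{W^3_t}=3$. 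Hence $W^3_t\prec W^3_{t+1}\wed Z\subsetneq W^3_{t+1}$, violating the minimality of $W^3_{t+1}$ in its definition.

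The main obstacle I anticipate is the bookkeeping of terminal points under the operation $\wed$: the whole induction hinges on the displayed inclusion above and on carefully excluding the degenerate equality $W^3_t=W^3_{t+1}\wed Z$. Once these are in place, the argument is a clean parallel to the proof of Lemma \ref{maxT2}, with $\T^3_{\gamma,\gamma'}$ playing the role that $\T^2_{\gamma,\gamma'}$ played there.
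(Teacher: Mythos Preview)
Your argument is correct. The paper itself does not prove this lemma here; it simply cites \cite[Corollary 4.6]{P1}, so there is no in-paper proof to compare against. Your self-contained argument---splitting on whether $Z$ is $\T^2_{\gamma,\gamma'}$-free, and in the free case running a minimality/maximality contradiction along the chain $\T^3_{\gamma,\gamma'}$ using Lemma~\ref{3tails} and the elementary inclusion $\term_{W^3_{t+1}\wed Z}\subseteq \term_{W^3_{t+1}}\cup\term_Z$---is the natural one and goes through cleanly.
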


\begin{proof}
See \cite[Corollary 4.6]{P1}.
\end{proof}

\begin{Lem}\label{free-perf}
Let $Z$ be a tail of a nodal curve $C$. The following properties hold
 \begin{enumerate} [(i)]
\item  \label{free-perf(i)}
if $\term_Z\subset Z'$, for some tail $Z'$ of $C$, then either $Z\subseteq Z'$, or $Z^c\subseteq Z'$;
 \item \label{free-perf(ii)}
if $\#(\term_Z\cap\term_{Z'})=k_Z-1$,  for some tail $Z'$ of $C$, then $(Z,Z')$ is perfect;
 \item \label{free-perf(iii)}
 if $k_Z\ge 2$, then $(Z, Z')$ is free, for every 1-tail $Z'$ of $C$.
\end{enumerate}
\end{Lem}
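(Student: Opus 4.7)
The plan is to settle all three parts by connectivity arguments on $C$, using the fundamental dictionary: a tail $Z$ cuts $C$ at the nodes in $\term_Z$, and a reducible node $R\in C_\gamma\cap C_{\gamma'}$ lies in a subcurve $Y$ iff at least one of $C_\gamma$, $C_{\gamma'}$ is a component of $Y$. I do not expect a serious obstacle; the three parts are largely independent and combinatorial. The bookkeeping of node types in part \eqref{free-perf(ii)} is where most of the care is needed.

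For \eqref{free-perf(i)}, I would argue by contradiction: suppose $\term_Z\subset Z'$ yet $Z\not\subseteq Z'$ and $Z^c\not\subseteq Z'$, and pick components $C_\gamma\subseteq Z\wed (Z')^c$ and $C_\delta\subseteq Z^c\wed (Z')^c$. Using the connectedness of $(Z')^c$, I build a chain of components inside $(Z')^c$ joining $C_\gamma$ to $C_\delta$ through consecutive nodes. Since the chain starts in $Z$ and ends in $Z^c$, it must traverse a node $R\in\term_Z$ at some step; but both irreducible components adjacent to $R$ lie in $(Z')^c$, so $R\notin Z'$, contradicting $\term_Z\subset Z'$.

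For \eqref{free-perf(ii)}, the key move is the decomposition into four (possibly empty) subcurves $W_1:=Z\wed Z'$, $W_2:=Z\wed (Z')^c$, $W_3:=Z^c\wed Z'$, $W_4:=Z^c\wed (Z')^c$; note that $(Z,Z')$ is perfect precisely when some $W_i$ is empty. I would then classify the nodes of $C$ by the pair of regions they connect: nodes in $\term_Z\cap\term_{Z'}$ are of type $(W_1,W_4)$ or $(W_2,W_3)$, while nodes in $\term_Z\setminus\term_{Z'}$ are of type $(W_1,W_3)$ or $(W_2,W_4)$. The hypothesis forces the lone node of $\term_Z\setminus\term_{Z'}$ to be of exactly one of these two types, so one of the two ``diagonal'' node-types is entirely absent. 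Invoking the connectedness of $(Z')^c=W_2\cup W_4$ (in the first case) or of $Z'=W_1\cup W_3$ (in the second), one of the $W_i$ must be empty, giving perfectness.

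For \eqref{free-perf(iii)}, suppose for contradiction $(Z,Z')$ is not free; since $Z'$ is a 1-tail, $\term_{Z'}=\{R\}$ lies in $\term_Z$, so in particular $R$ is not a self-node. Using $k_Z\ge 2$, I would pick a second node $R'\in\term_Z\setminus\{R\}$ and observe that removing the single point $R$ preserves connectedness of each of $Z$ and $Z^c$, while $R'\in(Z\setminus\{R\})\cap(Z^c\setminus\{R\})$ keeps $C\setminus\{R\}$ connected. This contradicts $Z'$ being a 1-tail, since $\term_{Z'}=\{R\}$ forces $C\setminus\{R\}$ to split as the disjoint union of $Z'\setminus\{R\}$ and $(Z')^c\setminus\{R\}$.
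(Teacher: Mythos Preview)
Your argument is correct in all three parts; the connectivity bookkeeping in (ii) via the four regions $W_1,\dots,W_4$ and the node-type analysis is clean and complete, and the contradictions in (i) and (iii) go through as stated. The paper itself gives no proof here, deferring instead to \cite[Lemma~3.4]{P1}, so there is no in-paper argument to compare against; your self-contained treatment is a genuine addition.
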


\begin{proof}
See \cite[Lemma 3.4]{P1}.
\end{proof}

\section{Admissibility}
\noindent
Let $\pi\: \C\ra B$ be a smoothing of a nodal curve $C$. In this Section we will show that, for given good partial desingularizations  
$\phi\:\wt\C^2\ra \C^2$ and $\psi\col \wt\C^3\ra \wt\C^2\times_B\C$, $\psi_*\L_\psi$ is a relatively torsion-free rank-1 sheaf on $\wt\C^2\times_B\C/\wt\C^2$ (see Theorem \ref{main2}).   

\subsection{Comparing nested sets of tails}
Keep the notation of Sections \ref{Not2} and \ref{Abel-maps}.
For a subset $\N$ of the set of nodes of $C$ and for a set $\S$ of subcurves of $C$, we define  
\[
d_{\S, \N}:=\#\{W\in \S : R\in \term_W, \text{ for some } R\in \N\}.
\]
For every $(i,j,k)\in\{1,\dots,p\}$ and $s\in\{1,2,3\}$, we set
\[
\T^s_{i,k|j,k}:=(\T^s_{i,k}\cup \T^s_{j,k})\setminus (\T^s_{i,k}\cap \T^s_{j,k}).
\]
It is easy to see that if $\T^1_{i,k;j,k}$ is nonempty, then it consists exactly of a tail such that $C_i\cap C_j$ is its set of terminal points. 

\begin{Prop}\label{Diff}
Let $C$ be a nodal curve with irreducible components $C_1,\dots C_p$. Let $(i,j,k)$ in $\{1,\dots,p\}^3$, where $i\ne j$ and $C_i\cap C_j$ is nonempty. We have
\begin{equation}\label{diff-rel}
d_{\T_{i,k}\cup \T_{j,k},C_i\cap C_j}\le1.
\end{equation}
For each $s$ in $\{2,3\}$, the set $\T^s_{i,k | j,k}$ is a totally ordered set with respect to inclusion; 
the minimal of the tails of $\T^s_{i,k | j,k}$ containing a tail $Z$ of $\T^s_{i,k | j,k}$ is $Z$-terminal. 
The set $\T^s_{i,k | j,k}$ is nonempty if and only if exactly one of the following conditions hold
\begin{itemize}
\item[(i)]
there is a unique 
tail of $\T^s_{i,k}\cup \T^s_{j,k}$ that does not contain $C_i\cup C_j$; this unique tail is the minimal element of $\T^s_{i,k | j,k}$.
\item[(ii)]
$s=3$ and there is a unique $Z$-terminal tail in $\T^3_{i,k}\cup\T^3_{j,k}$, where $Z$ is the maximal tail of $\T^2_{i,k | j,k}$; this unique tail is the minimal element of $\T^3_{i,k | j,k}$.
\end{itemize}
Finally, if $\T^s_{i,k | j,k}$ is nonempty for some $s$ in $\{2,3\}$, then $\T^1_{i,k | j,k}$ is empty.
\end{Prop}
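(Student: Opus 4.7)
\emph{Proof plan.} My approach is to first describe explicitly the tails that sit in each symmetric difference $\T^s_{i,k|j,k}$ and then extract the structural constraints from Lemmas \ref{2tails}, \ref{3tails}, \ref{maxT2}, \ref{maxT3}, and \ref{free-perf}. The plan is to begin with the observation that a tail $W \in \T^s_{i,k} \setminus \T^s_{j,k}$ either fails to contain $C_j$ (and then must have a terminal at some node of $C_i \cap C_j$), or else is squeezed out of the chain $\T^s_{j,k}$ by the $\prec$-minimality used in its inductive construction. This second scenario turns out to occur only for $s = 3$, via the $\T^2_{\gamma,\gamma'}$-free requirement built into the definition of $\T^3_{\gamma,\gamma'}$, and is the source of case (ii).

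For the inequality \eqref{diff-rel}, I plan to argue by contradiction: assume distinct $W, W' \in \T_{i,k} \cup \T_{j,k}$ have terminals at nodes of $C_i \cap C_j$. Lemma \ref{free-perf}(iii) rules out mixing a $1$-tail with a $\ge 2$-tail carrying a common terminal, so the comparison splits cleanly by type. In each remaining case, using that $\sigma(0)$ lies in the complement of each tail and that two tails with terminals at the same or different nodes of $C_i \cap C_j$ determine the rough position of $\sigma(0)$, one ends up forcing $\sigma(0)$ onto both sides of a disconnecting configuration, a contradiction.

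For the total ordering of $\T^s_{i,k|j,k}$ with $s \in \{2,3\}$ and the $Z$-terminality of the minimal tail above $Z$, I will take two elements $W, W' \in \T^s_{i,k|j,k}$ and apply Lemma \ref{2tails} or \ref{3tails} to the meet $W \wedge W'$ (which contains $C_k$, and after tracking the $C_i$- or $C_j$-containment, one of $C_i$ or $C_j$), concluding that the meet is again an $s$-tail of the required kind. The inductive $\prec$-minimality defining the two chains will then force $W \wedge W' \in \{W, W'\}$, proving the total order. A hypothetical strict $Z \prec W$ for a minimal $W$ above $Z$ would produce an intermediate tail lying in one of the chains $\T^s_{i,k}$, $\T^s_{j,k}$ but not in the symmetric difference, contradicting the minimality of $W$.

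Finally, to characterize nonemptiness I will identify case (i) as the situation where there is a unique tail in $\T^s_{i,k} \cup \T^s_{j,k}$ that fails to contain $C_i \cup C_j$, with uniqueness guaranteed by \eqref{diff-rel}; and case (ii) as the situation (only possible for $s = 3$) in which every relevant $3$-tail does contain $C_i \cup C_j$ but the maximal $Z \in \T^2_{i,k|j,k}$ forces, via the $\T^2_{\gamma,\gamma'}$-free requirement and Lemma \ref{maxT3}, the existence of a unique $Z$-terminal $3$-tail in $\T^3_{i,k} \cup \T^3_{j,k}$, whose $Z$-terminality is sharpened by Lemma \ref{free-perf}(ii). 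For the last assertion, every nonempty $\T^s_{i,k|j,k}$ with $s \in \{2,3\}$ produces a tail with a terminal at $C_i \cap C_j$ — directly in case (i), or through the associated $2$-tail $Z$ in case (ii) — so \eqref{diff-rel} will rule out any element of $\T^1_{i,k|j,k}$, which would necessarily also have such a terminal. The hardest part, I expect, will be the analysis of case (ii): pinning down which $3$-tail is forced into $\T^3_{i,k|j,k}$ without having a terminal at $C_i \cap C_j$, and controlling its terminal structure through $Z$ using Lemma \ref{free-perf}(ii) together with Lemma \ref{maxT3}.
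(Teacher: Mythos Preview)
Your plan identifies the right lemmas, but there is a genuine gap in the logical order and in two of the key steps.

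\textbf{The inequality \eqref{diff-rel}.} Your proposed direct contradiction---``forcing $\sigma(0)$ onto both sides of a disconnecting configuration''---does not close. After you reduce (correctly, via Lemma~\ref{free-perf}) to $W\in\T^3_{i,k}$ and $W'\in\T^3_{j,k}$ with $\term_W\cap\term_{W'}=\{R\}\subseteq C_i\cap C_j$, you have $C_i\subseteq W\wedge(W')^c$, $C_j\subseteq W^c\wedge W'$, $C_k\subseteq W\wedge W'$, and $\sigma(0)\in W^c\wedge(W')^c$. All four pieces are nonempty and there is no disconnecting set forcing $\sigma(0)$ into two places at once; this configuration is perfectly consistent. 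The paper resolves it differently: it first proves only the weak form $d_{\T^2_{i,k}\cup\T^2_{j,k},\,C_i\cap C_j}\le 1$ directly, then proves the total ordering of $\T^2_{i,k|j,k}$ using only that weak form, and \emph{then} derives the full \eqref{diff-rel} by applying Lemma~\ref{maxT2} to the $2$-tail $W\cup W'$ to manufacture incomparable elements $X\in\T^2_{i,k}\setminus\T^2_{j,k}$ and $X'\in\T^2_{j,k}\setminus\T^2_{i,k}$, contradicting that total order. So \eqref{diff-rel} is downstream of the $s=2$ total ordering, not upstream as in your plan.

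\textbf{The total ordering.} Your meet argument is incomplete. Lemma~\ref{2tails} (or \ref{3tails}) tells you $W\wedge W'$ is an $s$-tail, but it does not place $W\wedge W'$ in either chain $\T^s_{i,k}$ or $\T^s_{j,k}$, and ``$\prec$-minimality'' does not by itself force $W\wedge W'\in\{W,W'\}$: for that you would need the predecessor of $W$ in its chain to satisfy $W_{t-1}\prec W\wedge W'$, i.e.\ $W_{t-1}\subseteq W'$ and $\term_{W_{t-1}}\cap\term_{W\wedge W'}=\emptyset$, neither of which is automatic. The paper instead builds the symmetric difference explicitly as an alternating chain: writing $\T^2_{i,k}=\{W_{2t}\}_{t\ge0}$ and $\T^2_{j,k}=\{W_{2t+1}\}_{t\ge0}$, it shows inductively that $W_{t-1}\subsetneq W_t$ with $(W_{t-1},W_t)$ terminal until the two chains merge, so that $\T^2_{i,k|j,k}=\{W_0,\dots,W_m\}$. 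The same scheme, with an extra case for the $Z$-terminal $3$-tail of condition~(ii), handles $s=3$.
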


\begin{proof}
Assume that $W$ and $W'$ are tails respectively of $\T_{i,k}$ and $\T_{j,k}$ terminating in $C_i\cap C_j$, and hence $W\ne W'$. To show that \eqref{diff-rel} holds, we may assume that $k_W\ge2$, otherwise $W=W'$, because any $k$-tail with $k\ge 2$ is $Z$-free, for a 1-tail $Z$ of $C$. 
Notice that $(W,W')$ is not perfect, because 
\[
C_i\subseteq W\wedge (W')^c, \; C_j\subseteq W^c\wedge W' \; \text{ and }  \; 
C_k\subseteq W\wedge W'.
\]
Thus, by Lemma \ref{free-perf}, we have $k_W=k_{W'}=3$, and there is a node $R$ of $C$ such that 
\[
\term_W\cap \term_{W'}=C_i\cap C_j=\{R\}.
\]
In the sequel, we will use the following relation that we proved so far
\begin{equation}\label{byproduct}
d_{\T^2_{i,k}\cup\T^2_{j,k} , C_i\cap C_j}\le 1
\end{equation}

We claim that for each $s\in\{2,3\}$, the set $\T^s_{i,k | j,k}$ is a totally ordered set with respect to inclusion and the minimal tail of $\T^s_{i,k | j,k}$ containing a tail $Z$ of $\T^s_{i,k | j,k}$ is $Z$-terminal. Notice that the claim for $s=2$ implies that \eqref{diff-rel} holds: Indeed, since $W\cup W'$ crosses $R$ and $W\wedge W'$ does not contain $R$, these subcurves are 2-tails, by \cite[Lemmas 3.1 and 3.3]{P1} and by the fact that any $k$-tail with $k\ge2$ is $Z$-free, for a 1-tail $Z$ of $C$. In particular, Lemma \ref{maxT2} implies that there are $(W\cup W')$-terminal tails $X\in \T^2_{i,k}$ and $X'\in \T^2_{j,k}$ that are contained in $W\cup W'$. If we write
\[
\term_{W\cup W'}=\{S,S'\}, \text{ for } S\in \term_W \text{ and } S'\in\term_{W'},
\] 
 it follows that 
 \[
 S'\in \term_X, \; S'\not\in X', \; S\in \term_{X'} \text{ and } S\not\in X 
 \]
 Hence $X\not\in\T^2_{j,k}$, $X'\not\in\T^2_{i,k}$; also, $X$ and $X'$ do not contain each other, contradicting the fact that $\T^2_{i,k | j,k}$ is totally ordered.

 We prove the claim for $s=2$. Write 
 \[
 \T^2_{i,k}=\{W_{2t}\}_{t\ge 0} \; \text{ and } \; \T^2_{j,k}=\{W_{2t+1}\}_{t\ge0},
 \]
  where $W_t\prec W_{t+2}$, for each $t\ge0$. 
  If $\T^2_{i,k | j,k}$ is nonempty, then the minimal tails of $\T^2_{i,k}$ and $\T^2_{j,k}$ are different. Hence, using \eqref{byproduct}, exactly one of these tails do not contain $C_i\cup C_j$ (in particular, notice that (i) holds). 
  We may assume that $W_0$ does not contain $C_i\cup C_j$ and $W_0\in\T^2_{i,k | j,k}$. If $\T^2_{j,k}$ is empty, then $\T^2_{i,k}=\{W_0\}$ and we are done. If not, by \eqref{byproduct} we have $C_i\cup C_j\subseteq W_1$, and hence $W_0\subsetneq W_1$, by the minimal property of $W_0$. We have two possibilities. Assume first that $W_0\prec W_1$. Then we have $\#\T^2_{i,k}\ge2$, with $W_2\subseteq W_1$. Since the other inclusion holds by the minimal property of $W_1$, we get $W_1=W_2$, hence $\T^2_{i,k | j,k}=\{W_0\}$ and we are done. Assume now that $(W_0,W_1)$ is terminal. Of course, we have 
  $W_1\in \T^2_{i,k | j,k}$.  If $\T^2_{i,k}=\{W_0\}$, then $\T^2_{j,k}=\{W_1\}$ and we are done; if $\#\T^2_{i,k}\ge2$, then $W_1\subsetneq W_2$, by the minimal property of $W_1$. Iterating the reasoning, we deduce that
  \[
  \T^2_{i,k | j,k}=\{W_0,W_1,\dots, W_m\}, \text{ for some } m\ge0,
  \]
   where $W_{t-1}\subsetneq W_t$ and $(W_{t-1},W_t)$ is terminal, for each $t\in\{1,\dots, m\}$.

 We turn now to the proof of the claim for $s=3$. Arguing as we did for $s=2$, one can prove the claim when (i) holds for $s=3$ (in this case, we can use that \eqref{diff-rel} holds and that $\T^2_{i,k | j,k}$ is empty). Assume that $\T^3_{i,k | j,k}$ is nonempty and (i) does not holds for $s=3$. By the very definition of $\T^3_{i,k}$ and $\T^3_{j,k}$, we see that  there are tails $W_{t_0}\in \T^2_{i,k | j,k}$ and $W'_0\in\T^3_{i,k | j,k}$ such that $(W_{t_0}, W'_0)$ is terminal.
   Since $(W_{t-1},W_t)$ is terminal for each $t\in\{1,\dots,m\}$ and since the set of terminal points of any tail in $\T^3_{i,k}\cup\T^3_{j,k}$ does not contain $C_i\cap C_j$, there is a unique node of $C$ that can be terminal for both $W_{t_0}$ and $W'_0$, and this point belongs to $\term_{W_m}$; this proves the uniqueness of $W'_0$ and that $W_{t_0}=W_m$. We may assume that $W'_0\in\T^3_{i,k}$ (and hence $W_m\in\T^2_{j,k}$). Let $\{W'_t\}_{t<0}$ be the set of tails of $\T^3_{i,k}$ that are strictly contained in $W'_0$, with $W'_{t-1}\prec  W'_t$, for each $t\le0$. Since any tail in $\T^3_{j,k}$ contains $C_i\cup C_j$, we see that $\{W'_t\}_{t<0}$ is also the set of tails of $\T^3_{j,k}$ that are strictly contained in $W'_0$. Let 
   \[
   \{W'_{2t}\}_{t\ge0} \; \text{ and } \; \{W'_{2t+1}\}_{t\ge0}
   \]
    be respectively the remaining set of tails of $\T^3_{i,k}$ and $\T^3_{j,k}$. With the unique exception of $W'_0$, all the tails of these two sets are $(\T^2_{i,k}\cup\T^2_{j,k})$-free. If either $\T^3_{j,k}$ is empty or $W'_{-1}$ is the maximal tail of $\T^3_{j,k}$, then $W'_0$ is the maximal tail of $\T^3_{i,k}$ and we are done. If not, since $W'_{-1}\prec W'_1$, we have $W'_0\subsetneq W'_1$, by the minimal property defining $W'_0$. We can now proceed as in the case $s=2$ and deduce that 
    \[
    \T^3_{i,k | j,k}=\{W'_0,W'_1,\dots,W'_n\}, \text{ for some } n\ge0,
    \]
     where $W'_{t-1}\subsetneq W'_t$ and $(W'_{t-1},W'_t)$ is terminal, for each $t\in\{1,\dots,n\}$. The proof of the claim is complete.
   
We prove now that, for each $s\in\{2,3\}$, the set $\T^s_{i,k | j,k}$ is nonempty if and only if exactly one between (i) and (ii) holds. Using \eqref{diff-rel}, we see that (i) and (ii) do not hold at the same time for $s=3$. We proved the 'only if' part during the proof of the claim. The 'if' part is clear.

Finally, assume that $\T^s_{i,k | j,k}$ is non empty for some $s\in\{2,3\}$. Using (i) and (ii) we see that there is a 2-tail or a 3-tail of $\T_{i,k}\cup \T_{j,k}$ admitting the nodes of $C_i\cap C_j$ as terminal points. Since $\T^1_{i,k | j,k}$ consists at most of a tail with $C_i\cap C_j$ as terminal point, by \eqref{diff-rel} we see that $\T^1_{i,k | j,k}$ must be empty.
\end{proof}

\begin{Rem}\label{Rem1}
For $s$ in $\{1,2,3\}$, any two tails of $\T^s_{i,k}$ and $\T^s_{j,k}$ contain each other. It is clear if one of the two tails is in $\T^s_{i,k}\cap \T^s_{j,k}$, otherwise we use Proposition \ref{Diff}.
\end{Rem}

\begin{Rem}\label{Rem2}
Suppose that $\T^2_{i,k|j,k}$ is nonempty.
Write $\T^2_{i,k | j,k}=\{W_0,\dots,W_m\}$, where $W_{t-1}\subsetneq W_t$ and $(W_{t-1},W_t)$ is terminal. Let $S_1\in\term_{W_0}$ and $S_2\in\term_{W_m}$ be such that $S_1\not\in \term_{W_2}$ and $S_2\not\in \term_{W_{n-1}}$.
We call $S_1$ and $S_2$ \emph{the difference nodes of $\T^2_{i,k | j,k}$}. Up to switching $S_1$ and $S_2$, we have 
\begin{equation}\label{S1S2}
S_1\in C_i\cap C_j \; \text{ and } \; S_2\in \term_{W'_0},
\end{equation}
 where $W'_0$ is the minimal tail of $\T^3_{i,k | j,k}$ (if nonempty). 
For every $t$ in $\{1,\dots,n-1\}$, we have
\begin{equation}\label{term-2}
\term_{W_t}\subseteq \cup_{W\in\T^2_{i,k}}\term_W \; \text{ and } \; \term_{W_t}\subseteq \cup_{W\in\T^2_{j,k}}\term_W.
\end{equation}
 The two inclusions appearing in \eqref{term-2} do not hold at the same time for $W_0$ and $W_m$: The points for which they fail to hold are exactly the difference nodes of $\T^2_{i,k | j,k}$.
\end{Rem}

\subsection{Admissibility conditions}\label{sec5.2}

Throughout the section, we keep the notation of Sections \ref{Not2} and \ref{Abel-maps}. In particular, recall the sheaf $\L_\psi$ introduced in \eqref{Lpsi}.

We say that $\L_\psi$ is \emph{$\psi$-admissible}, 
if the restriction of $\L_\psi$ to every chain of $\psi$-exceptional components has degree $-1$, $0$ or $1$.
The notion of admissibility is employed in Theorem \ref{main2} to show that $\psi_*\L_\psi$ is a torsion-free rank-1 sheaf. For more details on admissible invertible sheaf, we will refer to \cite[Section 5]{CEP}.

\begin{Lem}\label{adm-cond}
The invertible sheaf  $\L_\psi$ is $\psi$-admissible if and only if the following conditions hold, for every $R_1$ in $C_{\gamma_1}\cap C_{\gamma'_1}$ and $R_2$ in $C_{\gamma_2}\cap C_{\gamma'_2}$, where $(\gamma_1,\gamma'_1)$ and $(\gamma_2,\gamma'_2)$ are in $\{1,\dots,p\}^2$, and  for every $(\alpha,\alpha')$ in $\{\gamma_1,\gamma'_1\}^2$ and  $(\beta,\beta')$ in $\{\gamma_2,\gamma'_2\}^2$. 
\begin{itemize}
\item
If $S\not\in\{R_1,R_2\}$,  for some  $S\in C_m\cap C_n$ and $(m,n)\in\{1,\dots,p\}^2$ with $m\ne n$, and if 
$C_{\alpha,\beta,\phi}\cap C_{\alpha',\beta',\phi}\ne\emptyset$ in $\wt \C^2$, then 
\begin{equation}\label{a}
 |\delta(\alpha,\beta,m,n)-\delta(\alpha',\beta',m,n)|\le 1.
\end{equation}
 \item
 If $R_1\ne R_2$, $\{\alpha,\alpha'\}=\{\gamma_1,\gamma'_1\}$ and $\{\beta,\beta'\}=
 \{\gamma_2,\gamma'_2\}$, then
 \begin{equation}\label{b}
  |\delta(\alpha,\beta,\alpha,\alpha')-\delta(\alpha,\beta',\alpha,\alpha')|\le 1;
  \end{equation} 
  \begin{equation}\label{c}
  |\delta(\alpha,\beta,\beta,\beta')-\delta(\alpha',\beta,\beta,\beta')|\le 1;
  \end{equation}
  \begin{equation}\label{d}
  |\delta(\alpha,\beta,\alpha,\alpha')-\delta(\alpha',\beta,\alpha,\alpha')-1|\le 1;
  \end{equation}
    \begin{equation}\label{e}
    |\delta(\alpha,\beta,\beta,\beta')-\delta(\alpha,\beta',\beta,\beta')-1|\le 1;
    \end{equation}
\begin{equation}\label{f}
|\delta(\alpha,\beta,\alpha,\alpha')-\delta(\alpha',\beta',\alpha,\alpha')-1|\le 1,   \text{ if } C_{\alpha,\beta,\phi}\cap C_{\alpha',\beta',\phi}\ne\emptyset;
\end{equation}
\begin{equation}\label{g}
|\delta(\alpha,\beta,\beta,\beta')-\delta(\alpha',\beta',\beta,\beta')-1|\le 1,   \text{ if } C_{\alpha,\beta,\phi}\cap C_{\alpha',\beta',\phi}\ne\emptyset.
\end{equation}
\item
If $R_1=R_2$ and $\{\alpha,\alpha'\}=\{\gamma_1,\gamma'_1\}$, then 
      \begin{equation}\label{h}
 |\delta(\alpha,\alpha,\alpha,\alpha')-\delta(\alpha,\alpha',\alpha,\alpha')-1|\le 1.
\end{equation}
\end{itemize}
\end{Lem}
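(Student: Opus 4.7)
The plan is to enumerate the chains of $\psi$-exceptional components of $\wt\C^3$ and, for each such chain $E$, compute $\deg_E \L_\psi$ directly from the formula \eqref{Lpsi}; the admissibility condition then becomes the requirement that each such degree lies in $\{-1, 0, 1\}$. By the local analysis in Section \ref{Not2}, the singular locus of $\wt\C^2 \times_B \C$ consists of points $(A, S)$ where $A$ is a distinguished point of $\wt\C^2$, so $\phi(A) = (R_1, R_2)$ for reducible nodes $R_1, R_2$ of $C$, and $S$ is a reducible node of $C$, say $S \in C_m \cap C_n$. Over each such $(A, S)$, the exceptional fiber of $\psi$ consists of the length-two chain $E_{S,m} \cup E_{S,n}$; the relevant chains of $\psi$-exceptional components are thus these two single components and their union.

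For any such chain $E$, I decompose $\deg_E \L_\psi$ into three contributions. First, $\deg_E \xi^*\P = 0$, since $\xi$ contracts $E$ to a point of $C$. Second, $\deg_E(\I_{\wt\Delta_1|\wt\C^3} \otimes \I_{\wt\Delta_2|\wt\C^3})$ vanishes when $S \notin \{R_1, R_2\}$, since neither diagonal passes through $(A,S)$, and otherwise contributes $\pm 1$ for each diagonal through $(A,S)$, exactly as recorded in \eqref{diag-type}. Third, the contribution $-\sum_{i,k,m'} \alpha_{i,k,m'}(C_{i,k,m',\psi} \cdot E)$ is controlled by which of the three divisors $C_{\gamma_1,\gamma_2,\phi}$, $C_{\gamma_1,\gamma'_2,\phi}$, $C_{\gamma'_1,\gamma_2,\phi}$ through $A$ (see \eqref{distinguished-type}) actually contain the chain $E$, combined with the choice of $m'$ among $\{m,n\}$ dictated by which exceptional component $E_{S,m}$ or $E_{S,n}$ lies in $E$. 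A direct local computation matches this intersection number to a single difference $\delta(\alpha,\beta,m',n') - \delta(\alpha',\beta',m',n')$ in the notation of \eqref{delta-def}, modulo reflections afforded by \eqref{delta-prop}.

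A case analysis then pairs each chain and each sub-chain with one of the stated inequalities. The chains over $(A, S)$ with $S \notin \{R_1, R_2\}$ yield exactly the inequality \eqref{a}, together with the hypothesis $C_{\alpha,\beta,\phi} \cap C_{\alpha',\beta',\phi} \ne \emptyset$ (which ensures that both divisors meet $E$). When $R_1 \ne R_2$ and $S \in \{R_1, R_2\}$, a single diagonal passes through $(A,S)$; enumerating the three sub-chains ($E_{S,\alpha}$, $E_{S,\alpha'}$ or their union) and the two roles of $S$ produces the six inequalities \eqref{b}--\eqref{g}, where the constant $-1$ appearing in \eqref{d}--\eqref{g} is precisely the diagonal contribution of step two. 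Finally, when $R_1 = R_2 = R$, both diagonals pass through $(A,R)$, and the combined contribution gives the single inequality \eqref{h}.

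The main obstacle is the bookkeeping of which strict transforms $C_{i,k,m',\psi}$ contain $E$ as opposed to merely meeting it transversally, together with the corresponding signs. This reduces to tracking, in each local chart of $\wt\C^2 \times_B \C$ at $(A,S)$, the specific divisors $\phi^{-1}(Z_1 \times Z_2) \times Z_3$ chosen for the blowups composing $\psi$ near that point, and verifying that the several compatible orderings described at the end of Section \ref{Not2} all yield equivalent intersection patterns on $E$. Once this is done, the equivalence of $\psi$-admissibility with the listed system of $\delta$-inequalities follows by collecting the contributions computed above.
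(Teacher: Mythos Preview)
Your approach is correct and is essentially the direct computation underlying \cite[Theorem~6.5]{CEP}, which the paper simply cites: its proof reads in full ``The proof follows from \cite[Theorem~6.5]{CEP}, using the relations \eqref{delta-prop}.'' So you are re-deriving the cited external result rather than invoking it; what you outline is heavier but self-contained.

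One small inaccuracy worth fixing: the singular locus of $\wt\C^2\times_B\C$ is not supported only over distinguished points of $\wt\C^2$. It also lies over points of the double-intersection curves $C_{\alpha,\beta,\phi}\cap C_{\alpha',\beta',\phi}$ where only two components of the special fiber meet; there the local equation is $z_0z_1=uv$ and the $\psi$-exceptional chain over $(A,S)$ has length one rather than two. These length-one chains contribute exactly the instances of \eqref{a} in which $(\alpha,\beta)$ and $(\alpha',\beta')$ differ in a single coordinate. This does not damage your argument, since each such length-one chain is a sub-chain of a length-two chain over an adjacent distinguished point, so the corresponding inequality already appears among those you extract from the distinguished-point analysis; but the assertion about the singular locus should be corrected. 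With that adjustment, your matching of sub-chains with the inequalities \eqref{a}--\eqref{h} (the constant $-1$ in \eqref{d}--\eqref{h} coming from the diagonal contribution, exactly as you say) is the right bookkeeping, and carrying it through recovers the content of the cited theorem.
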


\begin{proof}
The proof follows from \cite[Theorem 6.5]{CEP}, using the relations \eqref{delta-prop}.
\end{proof}

\begin{Lem}\label{adia-modif}
 Let $(i,j,k)$ be in $\{1,\dots,p\}^3$, where $C_i\cap C_j$ is nonempty. Then there is a possibly empty subcurve $Y$ of $C$ with $C_i\subseteq Y$ and $C_j\subseteq Y^c$, and such that  
 \[
 \O_\C(Z_{i,k})|_C\simeq \O_\C(Z_{j,k}-Y)|_C.
 \] 
\end{Lem}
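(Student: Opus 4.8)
The plan is to translate the statement into an identity of line bundles on $C$ and then to read off $Y$ from Proposition \ref{Diff}.

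First I would reduce. By Theorem \ref{main1} we have $\O_\C(Z_{i,k})|_C\simeq\O_\C(-\sum_{Z\in\T_{i,k}}Z)|_C$, and similarly with $j$ in place of $i$, so it is enough to find a subcurve $Y$ with $C_i\subseteq Y$, $C_j\subseteq Y^c$ and
\[
\O_\C(Y)|_C\;\simeq\;\O_\C\Big(\sum_{Z\in\T_{i,k}}Z-\sum_{Z\in\T_{j,k}}Z\Big)\Big|_C .
\]
Since $\T_{i,k}=\T^1_i\sqcup\T^1_k\sqcup\T^2_{i,k}\sqcup\T^3_{i,k}$ and $\T_{j,k}=\T^1_j\sqcup\T^1_k\sqcup\T^2_{j,k}\sqcup\T^3_{j,k}$, the set $\T^1_k$ and every tail common to $\T^s_{i,k}$ and $\T^s_{j,k}$ cancel on the right, which therefore only involves the symmetric differences $\T^1_{i,k|j,k}$, $\T^2_{i,k|j,k}$, $\T^3_{i,k|j,k}$; by the last assertion of Proposition \ref{Diff} either only $\T^1_{i,k|j,k}$ is nonempty, or only $\T^2_{i,k|j,k}$ and/or $\T^3_{i,k|j,k}$ are.

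Next I would record two elementary identities for restriction to $C$. Since $C$ is the (linearly trivial) fibre $\pi^*[0]$ on $\C$, $\O_\C(C)|_C\simeq\O_C$; hence $\O_\C(W^c)|_C\simeq\O_\C(-W)|_C$ for every subcurve $W$, and consequently, for subcurves $W\subseteq W'$, $\O_\C(W'-W)|_C\simeq\O_\C(W'\setminus W)|_C$, where $W'\setminus W$ denotes the reduced subcurve of components of $W'$ not lying in $W$.

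Now the two cases. If $\T^2_{i,k|j,k}=\T^3_{i,k|j,k}=\emptyset$ and $\T^1_{i,k|j,k}=\{W\}$ (if it too is empty the claim is immediate), then $W$ belongs to exactly one of $\T^1_i,\T^1_j$ and $\term_W=C_i\cap C_j$, so $C_i$ and $C_j$ lie on opposite sides of $W$; I would take $Y=W$ if $W\in\T^1_i$ and $Y=W^c$ if $W\in\T^1_j$, the displayed isomorphism and the required inclusions being immediate from the identities above. If instead $\T^2_{i,k|j,k}$ or $\T^3_{i,k|j,k}$ is nonempty, Proposition \ref{Diff} yields, after concatenating these two sets, a chain $U_0\subsetneq U_1\subsetneq\cdots\subsetneq U_L$ with each consecutive pair terminal, whose members alternate between $\T_{i,k}$ and $\T_{j,k}$, and whose least element $U_0$ does not contain $C_i\cup C_j$. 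Then the class on the right of the display is $(-1)^\varepsilon\sum_{t=0}^L(-1)^tU_t$, where $\varepsilon=0$ if $U_0\in\T_{i,k}$ and $\varepsilon=1$ otherwise; grouping consecutive terms via the identities of the previous paragraph — each $U_{2s}-U_{2s-1}$ collapsing to the pairwise disjoint subcurve $V_{2s}:=U_{2s}\setminus U_{2s-1}$, and the leftover $-U_L$, when $L$ is odd, to $U_L^c$ — rewrites $\sum_{t=0}^L(-1)^tU_t$ on $C$ as a single reduced subcurve $Y_0$, namely $U_0\cup V_2\cup\cdots$ (together with $U_L^c$ when $L$ is odd), these pieces being pairwise disjoint. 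I would then set $Y=Y_0$ if $\varepsilon=0$ and $Y=Y_0^c$ if $\varepsilon=1$. It remains to verify $C_i\subseteq Y$ and $C_j\subseteq Y^c$ in each of the four parity patterns, which one does using that $C_i\subseteq U_t$ for $U_t\in\T^2_{i,k}\cup\T^3_{i,k}$ and $C_j\subseteq U_t$ for $U_t\in\T^2_{j,k}\cup\T^3_{j,k}$, together with $C_i\cup C_j\not\subseteq U_0$.

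The hard part is exactly this last verification: one has to juggle the parity of the position of $U_0$ inside the chain, the parity of $L$, and — when $\T^3_{i,k|j,k}$ enters — the extra terminal node isolated in Remark \ref{Rem2}, in order to be certain that the alternating sum really collapses to one reduced subcurve (or to its complement) and that $C_i$ and $C_j$ land on the correct sides of it. The reduction, the cancellations, and the two restriction identities are routine.
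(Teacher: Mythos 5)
Your reduction via Theorem \ref{main1}, the two restriction identities, and the treatment of the case where only $\T^1_{i,k|j,k}$ is nonempty are fine, and up to that point you are essentially following the paper. The gap is the assertion that Proposition \ref{Diff} lets you concatenate $\T^2_{i,k|j,k}$ and $\T^3_{i,k|j,k}$ into one inclusion chain $U_0\subsetneq U_1\subsetneq\cdots\subsetneq U_L$ with consecutive pairs terminal. Proposition \ref{Diff}(ii) only says that the minimal element $W'_0$ of $\T^3_{i,k|j,k}$ is \emph{terminal} with the maximal element $W_m$ of $\T^2_{i,k|j,k}$, i.e.\ they share a terminal node; it does not give $W_m\subseteq W'_0$, and this containment can fail, so no such chain need exist. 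Concretely, let $C$ have components $C_r$ (with $\sigma(0)\in C_r$), $C_i,C_j,C_k,C_a,C_c$, with one node in each of $C_i\cap C_j$, $C_i\cap C_k$, $C_a\cap C_r$, $C_j\cap C_c$, $C_c\cap C_r$, and two nodes in $C_k\cap C_a$. One computes $\T^2_{i,k}=\{C_i\cup C_k\cup C_a\}$, $\T^2_{j,k}=\{C_i\cup C_j\cup C_k\cup C_a\}$, $\T^3_{i,k}=\{C_i\cup C_j\cup C_k\}$, $\T^3_{j,k}=\{C_i\cup C_j\cup C_k\cup C_c\}$. Here $W_m=C_i\cup C_j\cup C_k\cup C_a$ and $W'_0=C_i\cup C_j\cup C_k$, so $W'_0\subsetneq W_m$, and $W'_0$ is incomparable with the minimal element $C_i\cup C_k\cup C_a$ of $\T^2_{i,k|j,k}$: the four tails in the two symmetric differences are not totally ordered, so your ordering $U_0\subsetneq\cdots\subsetneq U_L$ and the telescoping expression $(-1)^\varepsilon\sum_t(-1)^tU_t$ cannot be set up. (The lemma of course still holds there: $Z_{i,k}-Z_{j,k}$ restricts to $\pm(C_j\cup C_c)$, i.e.\ one may take $Y=C_i\cup C_k\cup C_a\cup C_r$.)

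What is true, and what the paper's proof actually uses, is a weaker containment: collapse only the $\T^2$-difference into $X=\sum_{t}(-1)^{m-t}W_t$, which is a 2-tail whose terminal points are the two difference nodes of Remark \ref{Rem2}; since one of these nodes lies in $C_i\cap C_j\subseteq W'_0$ and the other is a terminal point of $W'_0$, Lemma \ref{free-perf}(i) together with $\sigma(0)\in X^c\setminus W'_0$ gives $X\subseteq W'_0$, and this (not $W_m\subseteq W'_0$) is exactly what makes $\pm(Z_{i,k}-Z_{j,k})|_C$ the restriction of plus or minus a reduced subcurve; in the example above $X=C_j\subseteq W'_0$ even though $W_m\not\subseteq W'_0$. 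Separately, the verification you flag as the hard part (which side $C_i$ and $C_j$ land on) can be avoided altogether: the paper first proves, by a quasistability argument, that whenever $\O_\C(Z_{i,k}-Z_{j,k})|_C\simeq\O_\C(-Y)|_C$ for a nonempty subcurve $Y$, the inclusions $C_i\subseteq Y$ and $C_j\subseteq Y^c$ follow automatically, by comparing the $\sigma(0)$-quasistable sheaf $\O_C(2\sigma(0)-Q_i-Q_k)\otimes\O_\C(Z_{i,k})|_C$ with the non-quasistable sheaf $\O_C(2\sigma(0)-Q_i-Q_k)\otimes\O_\C(Z_{j,k})|_C$. So the fix is to replace your chain claim by the containment $X\subseteq W'_0$ (or an equivalent), and you may also drop the parity case analysis in favor of that quasistability trick.
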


\begin{proof}
First, we claim that if there is a nonempty subcurve $Y$ of $C$ such that 
\[
\O_\C(Z_{i,k}-Z_{j,k})|_C\simeq \O_\C(-Y)|_C,
\]
then $C_i\subseteq Y$ and $C_j\subseteq Y^c$. Indeed, for each $m\in\{1,\dots,p\}$ fix a smooth point $Q_m$ of $C$ lying on $C_m$. For each pair $(m,n)\in\{1,\dots,p\}^2$ consider the $\sigma(0)$-quasistable invertible sheaf $N_{m,n}$ on $C$ defined as
\[
N_{m,n}:=\O_C(2\sigma(0)-Q_n-Q_m)\otimes\O_\C(Z_{n,m})|_C.
\]
We have $N_{i,k}=N'\otimes\O_\C(-Y)|_C$, where 
\[
N':=\O_C(2\sigma(0)-Q_i-Q_k)\otimes \O_\C(Z_{j,k})|_C.
\]
Notice that
\[
\beta_{N'}(W)=
\begin{cases} 
\begin{array}{ll}
\beta_{N_{j,k}}(W) & \text{ if either } C_i\cup C_j\subseteq W \text{ or } C_i\cup C_j\subseteq W^c; \\
\beta_{N_{j,k}}(W)+1 & \text{ if } C_i\subseteq W^c \text{ and } C_j\subseteq W; \\
\beta_{N_{j,k}}(W)-1 & \text{ if } C_i\subseteq W \text{ and } C_j\subseteq W^c. 
\end{array}
\end{cases}
\]
Since $Y$ is nonempty, $N'$ is not $\sigma(0)$-quasistable, and hence there is a subcurve $W$ of $C$ such that $\beta_{N'}(W)\le 0$ (where the inequality is strict if $P\not\in W$), with $C_i\subseteq W$ and $C_j\subseteq W^c$. Using that $N_{i,k}=N'\otimes\O_\C(-Y)|_C$ and that $N_{i,k}$ is $\sigma(0)$-quasistable, it follows that $C_i\subseteq Y\wedge W$ and $C_j\subseteq Y^c\wedge W^c$. The proof of the claim is complete.

Suppose that $\T^1_{i,k | j,k}$ is nonempty. Then $\T^1_{i,k | j,k}$ consists of one tail $Y$ terminating in $C_i\cap C_j$. By Lemma \ref{Diff} we have that $\T^s_{i,k | j,k}$ is empty, for each $s\in\{2,3\}$, hence  
\[
\O_\C(\pm(Z_{i,k}-Z_{j,k}))|_C\simeq \O_\C(-Y)|_C.
\]
and hence we are done. Thus, we may assume that $\T^1_{i,k | j, k}$ is empty. 
Write 
\[
\T^2_{i,k | j,k}=\{W_0,\dots,W_m\} \; \text{ and } \; \T^3_{i,k|j,k}=\{W'_0,\dots,W'_n\}
\]
 where $W_{t-1}\subsetneq W_t$, $W'_{t-1}\subsetneq W'_t$, and where $(W_{t-1},W_t)$ and $(W'_{t-1},W'_t)$ are terminal (see Proposition \ref{Diff}). We have 
\[
\O_\C(\pm (Z_{i,k}-Z_{j,k}))|_C\simeq\O_\C\left(\varepsilon\sum_{t=0}^{m} (-1)^{m-t} W_t + \sum_{t=0}^{n} (-1)^{n-t} W'_t \right)|_C,
\]
for some $\varepsilon\in\{-1,1\}$. Since $W_{t-1}\subsetneq W_t$ and $W'_{t-1}\subseteq W'_t$, there are subcurves $X$ and $X'$ of $C$ such that 
\[
  X=\sum_{t=0}^{m} (-1)^{m-t} W_t \; \text{ and } \; X'=\sum_{t=0}^n (-1)^{n-t} W'_t.
\]
If one between $\T^2_{i,k|j,k}$ and $\T^3_{i,k|j,k}$ is empty, then we are done. If not, then it follows from Proposition \ref{Diff} that $W'_0$ is $W_m$-terminal, hence we have $\varepsilon=(-1)^{n+1}$ and
\begin{equation}\label{pm}
\O_\C(\pm (Z_{i,k}-Z_{j,k}))|_C\simeq \O_\C\left((-1)^{n+1}X + \sum_{t=0}^{n-t} (-1)^{n-t} W'_t
\right)|_C.
\end{equation}
Notice that $X$ is a 2-tail admitting the difference nodes $S_1$ and $S_2$ of $\T^2_{i,k|j,k}$ as terminal points. Since $W'_0$ crosses the nodes of the set $C_i\cap C_j$, it follows from \eqref{S1S2} that $S_1$ and $S_2$ are contained in $W'_0$. Since $\sigma(0)$ is not contained in $X$, it follows from  Lemma \ref{free-perf} that $X\subseteq W'_0$. We deduce that the right hand side of Equation \eqref{pm} is isomorphic to $\O_\C(-Y)$, for some subcurve $Y$ of $C$.
\end{proof}

\begin{Thm}\label{main2}
Let $\pi\:\C\ra B$ be a smoothing  of a nodal curve $C$.
Let $\phi\:\wt\C^2\ra \C^2$ and $\psi\:\wt{\C}^3\ra \wt \C^2\times_B\C$ be  good partial desingularizations. Then the invertible sheaf $\L_\psi$ is  $\psi$-admissible. Moreover, 
 $\psi_*\L_\psi$ is a relatively torsion-free, rank-1 sheaf on $\wt\C^2\times_B \C/\wt\C^2$ of relative degree 0, whose formation commutes with base change.
\end{Thm}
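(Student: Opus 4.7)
My plan is to establish the theorem in two logical steps. First, I will verify that $\L_\psi$ is $\psi$-admissible by checking the numerical criterion of Lemma~\ref{adm-cond}. Second, I will deduce the properties of $\psi_*\L_\psi$ (relatively torsion-free, rank-$1$, relative degree $0$, commuting with base change) from the general theory of pushforwards of admissible invertible sheaves along good partial desingularizations developed in \cite[Section~5]{CEP}. Since that second part is essentially formal once admissibility is in hand, the real content of the proof lies in checking the inequalities (a)--(h) of Lemma~\ref{adm-cond}.

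The key translation is provided by Theorem~\ref{main1}: the coefficient $\alpha_{\gamma,\gamma',m}$ counts the tails $Z\in\T_{\gamma,\gamma'}$ containing the component $C_m$. Consequently,
\[
\delta(\gamma,\gamma',m,n)=\#\{Z\in\T_{\gamma,\gamma'}: C_m\subseteq Z\}-\#\{Z\in\T_{\gamma,\gamma'}: C_n\subseteq Z\},
\]
and each admissibility inequality becomes a combinatorial statement about how the tails in the symmetric difference of $\T_{\alpha,\beta}$ and $\T_{\alpha',\beta'}$ separate the components in question. The main tools will be Proposition~\ref{Diff} (which describes these symmetric differences as totally ordered chains of successively terminal tails with a unique extremal pair) together with Lemma~\ref{adia-modif} (which packages the divisor difference $Z_{i,k}-Z_{j,k}$ as $-Y$ for a single subcurve $Y$ with prescribed boundary behaviour). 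When the pairs $(\alpha,\beta)$ and $(\alpha',\beta')$ differ in both coordinates, I will handle the comparison in two stages through an intermediate pair, so that each stage falls under the scope of Proposition~\ref{Diff}.

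For inequality (a), the node $S=C_m\cap C_n$ is assumed distinct from $R_1$ and $R_2$, so it is disjoint from the loci where the relevant tails are forced to terminate; applying Lemma~\ref{adia-modif} then bounds $|\delta(\alpha,\beta,m,n)-\delta(\alpha',\beta',m,n)|$ by the number of components of $Y$ adjacent to $S$, which Proposition~\ref{Diff} controls to be at most $1$. Inequalities (b)--(g), in the case $R_1\ne R_2$, require more delicate analysis using Remarks~\ref{Rem1} and~\ref{Rem2}: the evaluation components $C_m,C_n$ now equal $C_\alpha,C_{\alpha'}$ (or $C_\beta,C_{\beta'}$), and the $-1$ shifts in (d)--(g) correspond precisely to the exceptional component created by the blowup realizing $C_{\alpha,\beta,\phi}\cap C_{\alpha',\beta',\phi}\ne\emptyset$, described by case (ii) of Proposition~\ref{Diff}. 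Inequality (h), the case $R_1=R_2$, is the simplest and reduces to a one-node calculation with the diagonal.

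The main obstacle I anticipate will be bookkeeping rather than conceptual difficulty. Keeping track of signs and of the $-1$ corrections in (d)--(h) will require pairing each inequality with the correct configuration of difference nodes $S_1,S_2$ from Remark~\ref{Rem2}, and verifying via \eqref{S1S2} that the extremal tail $W'_0$ always lies on the side prescribed by Proposition~\ref{Diff}. Lemma~\ref{free-perf} should supply the inclusions needed to close each case, but the combinatorial case analysis is likely to form the longest part of the argument.
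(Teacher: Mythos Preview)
Your overall architecture is exactly the paper's: reduce to the numerical criterion of Lemma~\ref{adm-cond}, translate $\delta$ via Theorem~\ref{main1} into the tail-counting formula, use Lemma~\ref{adia-modif} for the one-coordinate comparisons, and invoke \cite[Proposition~5.2]{CEP} for the formal consequences on $\psi_*\L_\psi$. Conditions \eqref{b}--\eqref{e} and \eqref{h} do indeed fall out of Lemma~\ref{adia-modif} as you indicate (the $-1$ shifts are absorbed because the subcurve $Y$ there satisfies $C_\alpha\subseteq Y$ and $C_{\alpha'}\subseteq Y^c$, forcing the relevant difference to be $0$ or $1$).

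The one place your plan is too optimistic is \eqref{a} when $\{\alpha,\alpha'\}=\{\gamma_1,\gamma'_1\}$ and $\{\beta,\beta'\}=\{\gamma_2,\gamma'_2\}$. The two-stage route through an intermediate pair gives only
\[
|\delta(\alpha,\beta,m,n)-\delta(\alpha',\beta',m,n)|\le |\delta(\alpha,\beta,m,n)-\delta(\alpha,\beta',m,n)|+|\delta(\alpha,\beta',m,n)-\delta(\alpha',\beta',m,n)|\le 2,
\]
and Proposition~\ref{Diff} as stated compares $\T_{i,k}$ with $\T_{j,k}$ (one index varying), so it does not directly control this sum down to $1$. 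The paper closes this by a direct contradiction: assuming tails $W_1\in\T^3_{\alpha,\beta}$ and $W_2\in\T^3_{\alpha',\beta'}$ both terminate at $S$ with opposite orientations, one shows that the $2$-tails $W_1\cup W_2$ and $W_1\wedge W_2$ are forced (via Lemma~\ref{maxT2} and Remark~\ref{Rem1}) into $\T^2_{\alpha,\beta|\alpha,\beta'}$ as distinct non-extremal elements, and then \eqref{term-2} yields the contradiction. This step genuinely goes beyond a formal application of Proposition~\ref{Diff}, so you should expect to reproduce an argument of this shape rather than appeal to the proposition as a black box.
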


\begin{proof} 
Let us check the conditions of Lemma \ref{adm-cond}.  
Let $R_1\in C_{\gamma_1}\cap C_{\gamma'_1}$ and $R_2\in C_{\gamma_2}\cap  C_{\gamma'_2}$, where $(\gamma_1,\gamma'_1),(\gamma_2,\gamma'_2)\in\{1,\dots,p\}^2$. Let 
 $(\alpha,\alpha')\in\{\gamma_1,\gamma'_1\}^2$ and $(\beta,\beta')\in\{\gamma_2,\gamma'_2\}^2$ such that $C_{\alpha,\beta,\phi}\cap C_{\alpha',\beta',\phi}\ne\emptyset$. 
Let $S\in C_m\cap C_n$, for $(m,n)\in\{1,\dots,p\}^2$ and $m\ne n$. We will often use that, for every $(a,b)\in\{1,\dots,p\}^2$, we have 
\begin{equation}\label{deltasum}
\delta(a,b,m,n)=\underset{T\in \term_W}{\sum_{W\in \T_{a,b}}}\epsilon_W,
\end{equation}
where $\epsilon_W=1$ (respectively $\epsilon_W=-1$) if $C_m\subseteq W$ (respectively $C_n\subseteq W$).

Assume that $S\not\in \{R_1,R_2\}$. 
By Lemma \ref{adia-modif},  we see that (\ref{a}) holds if either $\alpha=\alpha'$ or 
$\beta=\beta'$, so we can reduce to the case in which 
\[
\{\alpha,\alpha'\}=\{\gamma_1,\gamma'_1\}\;\text{ and } \;
\{\beta,\beta'\}=\{\gamma_2,\gamma'_2\}.
\]
If $R_1=R_2$, then 
$\{\alpha,\beta\}=\{\gamma_1,\gamma'_1\}=\{\alpha',\beta'\}$,
  (recall that the first blowup performed by $\phi$ is along the diagonal of $\wt\C^2$), hence  (\ref{a}) holds. 
  Moreover, (\ref{a}) holds if $S$ is a separating node of $C$. Indeed, in this case, let  $W$ be a 1-tail of $C$ such that $\term_W=\{S\}$. Since $C_{\gamma_1}\cap C_{\gamma'_1}\ne\emptyset$ and $S\ne R_1$, it follows that $C_{\alpha}\cup C_{\alpha'}$ is contained either in $W$ or  in $W^c$. Similarly,  $C_{\beta}\cup C_{\beta'}$ is contained either in $W$ or  in $W^c$. Since $W$ is $Z$-free, for every tail $Z$ with $k_Z\ge2$, it follows that 
  \[
  \delta(\alpha,\beta,m,n)=\delta(\alpha',\beta',m,n).
  \]
  
Therefore, to prove that \eqref{a} holds, we may assume that $R_1\ne R_2$ and that the subsets of tails of  $\T_{\alpha,\beta}$ and of $\T_{\alpha',\beta'}$ admitting $S$ as terminal point consist  respectively of a tail $W_1$ and of a tail $W_2$ (that are not 1-tails). We may also assume that $W_1\cup W_2$ crosses $S$, otherwise $\delta(\alpha,\beta,m,n)=\delta(\alpha',\beta',m,n)$ and we are done. Notice that
 $(W_1,W_2)$ is not perfect, otherwise $W_1\subseteq W_2^c$, hence 
 \[
 C_{\alpha}\cup C_{\beta}\subseteq W_2^c \; \text{ and } \;  C_{\alpha'}\cup C_{\beta'}\subseteq W_1^c,
 \] 
 from which we would get 
$\term_{W_1}=\term_{W_2}=\{R_1,R_2,S\}$ and $W_2=W_1^c$, which is not possible. Since $(W_1,W_2)$ is not perfect, by  Lemma \ref{free-perf} we can write 
\[
\term_{W_1}=\{S,U_1,U'_1\} \; \text{ and } \;\term_{W_2}=\{S,U_2,U'_2\},
\] 
with 
  $U_1\not\in W_2$, $U'_1\not\in W^c_2$, $U_2\not\in W_1$, $U'_2\not\in W^c_1$ and $\{U_1,U_2\}\cap \{U'_1,U'_2\}=\emptyset$. Notice that $W_1\cup W_2$ and $W_1\wedge W_2$ are 2-tails, with 
  \[
  \term_{W_1\cup W_2}=\{U_1,U_2\} \; \text{ and } \; \term_{W_1\wedge W_2}=\{U'_1,U'_2\}.
  \]

We claim that $W_1\cup W_2$ and $W_1\wedge W_2$ are distinct tails of $\T^2_{\alpha,\beta|\alpha,\beta'}$ and $W_1\wedge W_2$ crosses $R_1$ and $R_2$. Indeed, by Lemma \ref{maxT2} there are $(W_1\cup W_2)$-terminal tails 
\[
X_1\in \T^2_{\alpha,\beta}, \; 
X_2\in \T^2_{\alpha',\beta'}, \; 
X_3\in \T^2_{\alpha,\beta'}, \;
X_4\in \T^2_{\alpha',\beta},
\] 
all contained in $W_1\cup W_2$. Of course, $U_1\not\in \term_{X_1}$ and $U_2\not\in \term_{X_2}$, hence 
\[
U_1\in \term_{X_2},\; U_1\not\in X_1,\; U_2\in \term_{X_1}  \text{ and } U_2\not\in X_2.
\]
 Fix $i\in\{3,4\}$. We have $U_1\in \term_{X_i}$, otherwise $U_1\not\in X_i$ and $U_2\in \term_{X_i}$, which implies that $X_2$ and $X_i$ do not contain each other, contradicting Remark \ref{Rem1}. Analogously, using $X_1$ and $X_i$, it follows that $U_2\in\term_{X_i}$. 
We deduce that 
\[
W_1\cup W_2=X_3=X_4\in \T^2_{\alpha,\beta|\alpha,\beta'}\cap \T^2_{\alpha,\beta|\alpha',\beta}\cap\T^2_{\alpha,\beta'|\alpha',\beta'}\cap \T^2_{\alpha',\beta|\alpha',\beta'}.
\]
 Thus, combining \eqref{diff-rel} with the condition (i) of Proposition \ref{Diff}, we see that 
$C_{\alpha}$, $C_{\alpha'}$, $C_{\beta}$ and $C_{\beta'}$ are contained in $W_1\wedge W_2$. In particular, we can repeat the above argument replacing $W_1\cup W_2$ with $W_1\wedge W_2$ to get  $W_1\wedge W_2\in \T^2_{\alpha,\beta|\alpha,\beta'}$.  Finally, the tails $W_1\cup W_2$ and $W_1\wedge W_2$ are distinct, since 
\[
S\in (W_1\cup W_2)\setminus (W_1\wedge W_2).
\] 

It follows from the claim and Proposition \ref{Diff} that $W_1\wedge W_2$ is a tail of 
$\T^2_{\alpha,\beta|\alpha,\beta'}$ which is neither minimal nor maximal. 
Using \eqref{term-2}, we get 
\[
\term_{W_1\wedge W_2}\subseteq \cup_{W\in \T^2_{\alpha,\beta}}\term_W,
\]
which yields a contradiction because $U'_1\in\term_{W_1}\cap \term_{W_1\wedge W_2}$ and $W_1\in \T^3_{\alpha,\beta}$. 

We see that \eqref{a} always holds. From now on, we may assume that 
\[
\{\alpha,\alpha'\}=\{\gamma_1,\gamma'_1\}\; \text{ and } \; \{\beta,\beta'\}=\{\gamma_2,\gamma'_2\}.
\]
Notice that (\ref{b}) and (\ref{c}) hold by Lemma \ref{adia-modif}. Using \eqref{delta-prop} and again Lemma \ref{adia-modif}, we see that to show \eqref{d} and \eqref{h}, it suffices to prove that
\begin{equation}\label{0}
\delta(\alpha,\beta,\alpha,\alpha')-\delta(\alpha',\beta,\alpha,\alpha')\ge0.
\end{equation}
If $\delta(\alpha,\beta,\alpha,\alpha')<0$, then there is a tail $W$ in $\T_{\alpha,\beta}$ such that $C_{\alpha'}\subseteq W$ and such that $\term_W$ contains $C_{\alpha}\cap C_{\alpha'}$ (hence $C_{\alpha}$ is not contained in $W$). 
The unique possibility is that $W\in \T^1_{\beta}$, with $\alpha\ne \beta$, and that $C_{\alpha}\cap C_{\alpha'}$ is a separating node. We obtain that 
$\delta(\alpha,\beta,\alpha,\alpha')=-1$ and $\delta(\alpha',\beta,\alpha',\alpha)=2$. Similarly, if 
$\delta(\alpha',\beta,\alpha',\alpha)<0$, we obtain that 
 $\delta(\alpha',\beta,\alpha',\alpha)=-1$ and $\delta(\alpha,\beta,\alpha,\alpha')=2$. 
Since
\[
\delta(\alpha,\beta,\alpha,\alpha')-\delta(\alpha',\beta,\alpha,\alpha')=
\delta(\alpha,\beta,\alpha,\alpha')+\delta(\alpha',\beta,\alpha',\alpha),
\]
we see that \eqref{0} holds. One can prove \eqref{e} in a similar fashion.

Finally, again by Lemma \ref{adia-modif}, we have
\[
\delta(\alpha,\beta,\alpha,\alpha')-\delta(\alpha,\beta',\alpha,\alpha')+
\delta(\alpha,\beta',\alpha,\alpha')-\delta(\alpha',\beta',\alpha,\alpha') \le 2.
\]
Therefore, we see that \eqref{f} holds once we show that  
\[
\delta(\alpha,\beta,\alpha,\alpha')-\delta(\alpha',\beta',\alpha,\alpha')\ge0,
\]
and one can prove that this is true with a reasoning similar to the one employed to prove \eqref{0}.  One can prove \eqref{g} in a similar fashion.

The last sentence is consequence of \cite[Proposition 5.2]{CEP}
\end{proof}

\section{The analysis locally around the diagonal}\label{diagonal}
\noindent
We will keep the notation of Sections \ref{Not2} and \ref{Abel-maps}. Consider the map
\[
\alpha^2\circ\phi\col\wt\C^2 \dashrightarrow  \ol{\mathcal J^f}
\]
defined in \eqref{alphaPE} and induced by the relative sheaf $\psi_*\L^{\P,\E}_\psi$ on $\wt\C^2\times_B\C/\wt\C^2$. The goal of this section is to provide conditions ensuring that $\alpha^2\circ\phi$ is defined locally around the locus of $\wt\C^2$ lying over the points $(R,R)$ of $\wt\C^2$, for $R\in \N(C)$. The conditions are always satisfied by the Abel--Jacobi map of $\C/B$ (see Theorem \ref{main3}).

\begin{Lem}\label{compadm} 
Let $C$ be a curve and $\mu\col C(d)\ra C$ be the contraction map, for a positive integer $d$. Let $\E$ be a polarization on $C$ and $P$ be a smooth point of $C(d)$.
Let $\L$ and $\M$ be invertible sheaves on $C(d)$. Let $\mathcal R$ be the 
set of $\mu$-exceptional components. Assume that there is a smoothing $\X\ra B$ of $C(d)$ such that 
\[
\M\otimes\L^{-1}\simeq \O_\X\left(\underset{E\in\mathcal R}{\sum} c_E E \right)|_{C(d)}, \,\,\,  c_E\in\mathbb{Z}.
\]
If $\L$ is $\mu$-admissible and $\M$ is $P$-quasistable with respect to $\mu^*\E$, then $\mu_*\L$ is $\mu(P)$-quasistable with respect to $\E$. 
\end{Lem}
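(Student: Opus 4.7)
The plan is to verify the quasistability inequalities for $\mu_*\L$ subcurve by subcurve on $C$, matching each proper subcurve $Z\subseteq C$ with a suitable lifting $Y$ of $Z$ in $C(d)$ so that $\beta_{\mu_*\L}(Z,\E)$ agrees with $\beta_\M(Y,\mu^*\E)$ and $k_Y=k_Z$, and then invoking the $P$-quasistability of $\M$.

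First, by the $\mu$-admissibility of $\L$ and the structure theory of admissible sheaves on a semistable model (see \cite[Section 5]{CEP}), $\mu_*\L$ is a torsion-free rank-$1$ sheaf on $C$ with $R^1\mu_*\L=0$. Moreover, for every subcurve $Z\subseteq C$ there exists a canonical lifting $Y$ of $Z$ in $C(d)$---obtained from the strict transform of $Z$ by attaching initial segments of the $\mu$-exceptional chains above the nodes of $Z\cap Z^c$, in a way prescribed by the degrees of $\L$ on these chains (all of which lie in $\{-1,0,1\}$)---with the property that $(\mu_*\L)_Z\simeq \mu_*(\L_Y)$. Since $\mu^*\E$ is trivial on every $\mu$-exceptional component and $k_Y=k_Z$ for any such lifting, this yields
\[
\beta_{\mu_*\L}(Z,\E)=\beta_\L(Y,\mu^*\E).
\]

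Next, I would transfer the computation from $\L$ to $\M$ via the hypothesis $\M\ox\L^{-1}\simeq \O_\X\bigl(\sum_E c_E E\bigr)|_{C(d)}$. Because each $\mu$-exceptional component $E$ is a $(-2)$-curve on the regular surface $\X$ meeting its neighbours transversally, the degree of $\O_\X(E)|_{C(d)}$ on any lifting $Y'$ of $Z$ is an intersection number determined solely by the local position of $Y'$ in the chain containing $E$. Hence, by shifting the initial segments of the exceptional chains attached to the strict transform of $Z$, I can modify $Y$ into another lifting $Y'$ of $Z$ whose shift exactly absorbs the contribution of $\sum_E c_E E$, giving
\[
\beta_\L(Y,\mu^*\E)=\beta_\M(Y',\mu^*\E)
\]
while still preserving $k_{Y'}=k_Z$. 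Since $\mu(P)$ must be a smooth point of $C$ for the conclusion to make sense, $P$ lies on a non-exceptional component of $C(d)$, so $P\in Y'$ if and only if $\mu(P)\in Z$, independently of which initial exceptional segments are adjoined.

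Finally, applying the $P$-quasistability of $\M$ with respect to $\mu^*\E$ to the proper subcurve $Y'$ of $C(d)$ yields $0\le \beta_\M(Y',\mu^*\E)\le k_{Y'}$ with the strict inequality falling on the correct side; combined with the two equalities above, this is precisely the $\mu(P)$-quasistability inequality for $\mu_*\L$ at $Z$. Letting $Z$ range over all proper subcurves of $C$ finishes the proof. The main obstacle is the combinatorial bookkeeping in the middle step: one must show that twisting by $\sum_E c_E E$ at the level of sheaves on $C(d)$ corresponds exactly to a change of lifting $Y\leadsto Y'$ at the level of subcurves. This reduces to a chain-by-chain verification on the dual graph of $C(d)$, which is manageable thanks to the $\{-1,0,1\}$ constraint on the exceptional degrees imposed by admissibility.
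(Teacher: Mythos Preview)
Your outline is headed in the right direction, but the middle step—producing a lifting $Y'$ with $\beta_\L(Y,\mu^*\E)=\beta_\M(Y',\mu^*\E)$—is not justified by the admissibility of $\L$ alone. The ``chain-by-chain verification'' you describe requires control not only on the degrees of $\L$ on the exceptional chains but also on the $c_E$'s (equivalently, on the degrees of $\M$ there). Without that, there is no reason the desired $Y'$ should exist: on a single chain $E_1,\dots,E_d$ one computes $\beta_\M(Y_{j'})=\beta_\L(Y_{j'})+c_{j'+1}-c_{j'}$, and for arbitrary integers $c_i$ the target value $\beta_\L(Y_j)$ need not be hit as $j'$ varies. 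The missing ingredient is precisely that $\M$, being $P$-quasistable with respect to $\mu^*\E$, is itself $\mu$-admissible (apply the quasistability inequality to each exceptional subchain). This is the observation the paper makes explicit.

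Once you add it, the combinatorial bookkeeping evaporates: with both $\L$ and $\M$ $\mu$-admissible and differing by an exceptional twist, \cite[Proposition~5.3]{CEP} gives $\mu_*\L\simeq\mu_*\M$, so your lifting $Y'$ is simply the canonical lifting for $\M$, and the quasistability of $\mu_*\L$ follows from that of $\mu_*\M$ via \cite[Proposition~5.2]{CEP}. This is exactly the paper's one-line proof. Your approach, completed in this way, unpacks the content of those two propositions rather than citing them, which is fine—but you should attribute the ``manageability'' to the admissibility of $\M$, not of $\L$.
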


\begin{proof}
Use \cite[Propositions 5.2 and 5.3]{CEP}, observing that an invertible sheaf on $C$ which is $P$-quasistable with respect to $\mu^*\E$ is $\mu$-admissible.
\end{proof}

Recall the identification of $X_A$ with $C(2)$ and of $\psi(X_A)$ with $C$, and of the map $\mu\: C(2)\ra C$ with $\psi|_{X_A}\:X_A\ra \psi(X_A)$. Recall that $\xi$ is the composed map $\xi\:\wt\C^3\stackrel{\psi}{\ra}\wt\C^2\times_B \C\ra\C$, where the second map is the projection onto the last factor.

\begin{Lem}\label{mu*}
Let $A$ be a distinguished point of $\wt\C^2$ in the intersection of the divisors $C_{\gamma_1,\gamma_2,\phi}$, $C_{\gamma_1,\gamma'_2,\phi}$ and $C_{\gamma'_1,\gamma_2,\phi}$ of $\wt\C^2$, for $(\gamma_1,\gamma'_1)$ and $(\gamma_2,\gamma'_2)$ in $\{1,\dots,p\}^2$.  Consider the Cartier divisor $D$ of $\wt\C^3$ given by
\[
D:=-\sum_{m=1}^p \alpha^{\P,\E}_{\gamma_1,\gamma_2,m}(C_{\gamma_1,\gamma_2,m,\psi}+C_{\gamma_1,\gamma'_2,m,\psi}+C_{\gamma'_1,\gamma_2,m,\psi}).
\]
Let $Q_{\gamma_1}$ and $Q_{\gamma_2}$ be smooth points of $\psi(X_A)$ lying on $C_{\gamma_1}$ and $C_{\gamma_2}$. 
Then $\O_{\C^3}(D)$ has degree $0$ on the $\psi|_{X_A}$-exceptional components and the invertible sheaf 
\[
(\psi_*\xi^*\P)|_{\psi(X_A)}(-Q_{\gamma_1}-Q_{\gamma_2})\otimes\psi_*\O_{\wt\C^3}(D)|_{\psi(X_A)}
\]
 is $\sigma(0)$-quasistable with respect to $\E|_{\psi(X_A)}$.
\end{Lem}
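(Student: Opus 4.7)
The plan is to apply Lemma \ref{compadm} to the contraction $\mu := \psi|_{X_A}\col X_A \to \psi(X_A)$, using the identifications $X_A \cong C(2)$ and $\psi(X_A) \cong C$. Set
\[
\L := \xi^*\P|_{X_A}(-Q_{\gamma_1}-Q_{\gamma_2}) \otimes \O_{\wt\C^3}(D)|_{X_A}
\]
and
\[
\M := \mu^*\bigl(\P|_C(-Q_{\gamma_1}-Q_{\gamma_2}) \otimes \O_\C(Z^{\P,\E}_{\gamma_1,\gamma_2})|_C\bigr).
\]
By the defining property of the integers $\alpha^{\P,\E}_{\gamma_1,\gamma_2,m}$ recalled in Section \ref{Abel-maps}, the sheaf inside $\mu^*$ in $\M$ is $\sigma(0)$-quasistable with respect to $\E$; hence $\M$ is $\mu^{-1}(\sigma(0))$-quasistable with respect to $\mu^*\E$, since $\sigma$ avoids the nodes of $C$ and so $\mu^{-1}(\sigma(0))$ is a smooth point on a non-exceptional component of $X_A$. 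Once I check that (a) $\L$ is $\mu$-admissible, and (b) $\L \otimes \M^{-1}$ extends to an invertible sheaf on a smoothing of $X_A$ supported on $\mu$-exceptional components, Lemma \ref{compadm} combined with $\mu_*\O_{X_A} = \O_C$, the projection formula, and the identity $\xi|_{X_A} = \mu$ gives that $\mu_*\L$ is $\sigma(0)$-quasistable with respect to $\E$ and coincides with the sheaf in the statement.

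The core computation is the identification of the divisor class of $\O_{\wt\C^3}(D)|_{X_A}$ on $X_A$. Using the local chart at $(A, S)$ for each node $S$ of $C$ described in Section \ref{Not2}, I would track each strict transform $C_{\alpha,\beta,m,\psi}$ through the blowup $\psi$ and compute its intersection numbers with every non-exceptional component $\wh C_{m'}$ and every $\mu$-exceptional component $E_{S,m'}$ of $X_A$. The goal is to show that the alternating sum defining $D|_{X_A}$ agrees, up to an exceptional divisor of degree $0$ on every exceptional chain, with $\mu^*(Z^{\P,\E}_{\gamma_1,\gamma_2})$. The vanishing on exceptional components is controlled by two observations: first, $\sum_{m=1}^p C_m = \pi^{-1}(0)$ is a fiber class in $\C$ whose pullback by $\xi$ meets each $\mu$-exceptional $\mathbb{P}^1$ with total degree $0$, so summing the strict transforms over $m$ with the coefficients $\alpha^{\P,\E}_{\gamma_1,\gamma_2,m}$ introduces only corrections of degree $0$ on exceptional chains; second, the symmetric appearance in $D$ of the three strict transforms corresponding to the three divisors of $\wt\C^2$ passing through $A$ produces the remaining cancellations on the exceptional $\mathbb{P}^1$s at each node $S$. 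For the extension condition in (b), one pulls back to the $\lambda$-smoothing $\W$ of $X_A$ coming from a slice $\lambda$ through $A$ and combines the local analysis with relation \eqref{diag-type}.

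The main technical obstacle will be the meticulous bookkeeping in the local chart around each $(A, S)$, since the strict transforms $C_{\alpha,\beta,m,\psi}$ depend on the particular order in which the good partial desingularization $\psi$ performs its successive blowups (recall from Section \ref{Not2} that there are six ways to resolve $\wt\C^2 \times_B \C$ locally at $(A,S)$), whereas the conclusion is independent of this order. The cleanest way around this is to argue by linear equivalence on $\wt\C^3$: each sum $\sum_{m=1}^p C_{\alpha,\beta,m,\psi}$ is, up to a $\psi$-exceptional Cartier correction, the pullback by $\xi$ of the special fiber class, so the difference between $D|_{X_A}$ and $\mu^*(Z^{\P,\E}_{\gamma_1,\gamma_2})$ consists entirely of $\psi$-exceptional terms and has vanishing degree on each exceptional chain of $X_A$. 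Putting (a) and (b) together and invoking Lemma \ref{compadm} then concludes the proof.
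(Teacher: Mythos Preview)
Your detour through Lemma~\ref{compadm} is unnecessary and, more importantly, your sketch does not establish the first assertion of the lemma, namely that $\O_{\wt\C^3}(D)$ has degree~$0$ on \emph{each} $\psi|_{X_A}$-exceptional component (not merely on each chain). This stronger statement is what is actually used later, for instance in the proofs of Theorems~\ref{main3} and~\ref{main4}. Your plan only aims at admissibility (degree in $\{-1,0,1\}$ on chains), and your ``exceptional divisor of degree~$0$ on every exceptional chain'' leaves open the possibility of a $(+1,-1)$ split across $E_{S,m}$ and $E_{S,n}$.

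The paper's argument is a direct computation with no appeal to Lemma~\ref{compadm}. Fixing a slice and pulling back to the $\lambda$-smoothing $\W$ of $X_A$, one invokes \cite[Table~1 of Lemma~3.2]{CEP} to read off, for each node $S\in C_m\cap C_n$, exactly how each of the three strict transforms $C_{\gamma_1,\gamma_2,m,\psi}$, $C_{\gamma_1,\gamma'_2,m,\psi}$, $C_{\gamma'_1,\gamma_2,m,\psi}$ meets $X_{A,S}=\wh C_m\cup\wh C_n\cup E_S$. Summing with the common coefficient $\alpha_m:=\alpha^{\P,\E}_{\gamma_1,\gamma_2,m}$, the local divisor has coefficients $-3\alpha_m,\,-(2\alpha_m+\alpha_n),\,-(\alpha_m+2\alpha_n),\,-3\alpha_n$ along $\wh C_m,\,E_{S,m},\,E_{S,n},\,\wh C_n$; this arithmetic progression forces degree~$0$ on each of $E_{S,m}$ and $E_{S,n}$, and gives $\deg_{\wh C_m}=\sum_{n\ne m}\#(C_m\cap C_n)(\alpha_m-\alpha_n)$. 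Since these are precisely the multidegrees of $\O_\C(Z^{\P,\E}_{\gamma_1,\gamma_2})|_C$, the quasistability of the displayed sheaf is then immediate from the defining property of $Z^{\P,\E}_{\gamma_1,\gamma_2}$---no comparison lemma is required. Your ``linear equivalence'' shortcut (writing $\sum_m C_{\alpha,\beta,m,\psi}$ as a fiber class up to exceptional correction) does not bypass this computation: it is exactly those exceptional corrections that carry the content, and to control them componentwise you are back to the table in \cite{CEP}. Finally, your reference to \eqref{diag-type} is misplaced; that relation concerns the diagonal ideals and plays no role here.
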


\begin{proof}
Fix a slice $\lambda\:B\to\wt\C^2$ through $A$, let $\W\ra B$ be the $\lambda$-smoothing of $X_A$ and $\theta\col \W\ra \wt\C^3$ be the induced map 
 (see Diagram \eqref{diagram}). 
Set $\alpha_m:=\alpha^{\P,\E}_{\gamma_1,\gamma_2,m}$.
 
Fix a node $S$ of $C$ such that $S\in C_m\cap C_n$, for $(m,n)\in\{1,\dots,p\}^2$ and $m\ne n$, and  set 
\[
X_{A,S}:=\wh C_m\cup \wh C_n\cup E_S.
\]
  By \cite[Table 1 of Lemma 3.2]{CEP}, there are $(i,k)$, $(i,l)$, $(j,k)$ in $\{1,\dots,p\}^2$ such that 
\[
\{(i,k),(i,l),(j,k)\}=\{(\gamma_1,\gamma_2),(\gamma_1,\gamma'_2),(\gamma'_1,\gamma_2)\}
\]
and satisfying the following properties
\[
\begin{array}{lll}
 C_{i,k,m,\psi}\cap X_{A,S}=\wh C_m\cup E_S & \text{ and } &   C_{i,k,n,\psi}\cap X_{A,S}=\wh C_n; \\
  C_{i,l, m, \psi}\cap X_{A,S}=\wh C_m\cup E_{S,m} &  \text{ and } &   C_{i,l, n, \psi}\cap X_{A,S}=\wh C_n\cup E_{S,n};\\
   C_{j,k, m,\psi}\cap X_{A,S}=\wh C_m & \text{ and } &C_{j,k,n,\psi}\cap X_{A,S}= \wh C_n\cup E_S. 
\end{array}
\]
Therefore, we obtain
\begin{equation}\label{D1D2}
\theta^*\O_{\wt\C^3}(D)|_{X_A}\simeq\O_{\W}(D_1+D_2)|_{X_A}
\end{equation}
where 
\begin{equation}\label{D1}
\begin{array}{lll}
D_1 := & -\alpha_m \wh C_m-\alpha_m (\wh C_m+E_{S,m})-\alpha_m(\wh C_m+E_S) \\
& -\alpha_n \wh C_n-\alpha_n (\wh C_n+E_{S,n})-\alpha_n(\wh C_n+E_S),\\
\end{array}
\end{equation}
and where $D_2$ is not supported on $X_{A,T}$.

It follows from \eqref{D1D2} and \eqref{D1} that $\O_\W(D_1+D_2)$ has degree $0$ on every $\psi$-exceptional component. Moreover, for each $m\in\{1,\dots,p\}$, we have
\[
\deg_{C_m}\O_\W(D_1+D_2)=\sum_{n\ne m} \#(C_m\cap C_n)\cdot (\alpha_m-\alpha_n).
\]
Therefore, $\psi_*\O_{\wt\C^3}(D)|_{\psi(X_A)}$ and $\O_\C(Z_{\gamma_1,\gamma_2}^{\P,\E})|_C$ are invertible sheaves on $C$ with the same degree on irreducible components of $C$, which proves the last sentence.
\end{proof}

\begin{Lem}\label{ext-red} 
If $\L^{\P,\E}_\psi\otimes\O_{X_A}$ is $\psi|_{X_A}$-admissible and its push-forward via $\psi|_{X_A}$ is $\sigma(0)$-quasistable with respect to $\E|_{\psi(X_A)}$, for a distinguished point $A$ of $\wt\C^2$, then the map $\alpha^2_{\P,\E}\circ\phi\col\wt\C^2 \dashrightarrow  \ol{\mathcal J^f}$ 
induced by the relative sheaf $\psi_*\L^{\P,\E}_\psi$ on $\wt\C^2\times_B\C/\wt\C^2$ is defined on an open subset of $\wt\C^2$ containing $A$. If this condition is satisfied for every distinguished point $A\in\phi^{-1}(R_1,R_2)$ of $\wt\C^2$, where $\{R_1,R_2\}\subseteq \N(C)$, then the map $\alpha^2\circ\phi$ is defined everywhere.
\end{Lem}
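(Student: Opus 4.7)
The approach combines Theorem \ref{main2} with the universal property of the fine moduli space $\ol{\mathcal J^f}$.

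For the first assertion, Theorem \ref{main2} (adapted to the pair $(\P,\E)$) produces a relatively torsion-free rank-1 sheaf $\psi_*\L^{\P,\E}_\psi$ on $\wt\C^2\times_B\C/\wt\C^2$ whose formation commutes with base change. Identifying $p_1^{-1}(A)$ with $\psi(X_A)\simeq C$, base-change commutation gives a canonical isomorphism
\[
(\psi_*\L^{\P,\E}_\psi)|_{p_1^{-1}(A)}\simeq \psi|_{X_A*}(\L^{\P,\E}_\psi|_{X_A}).
\]
Since $\ol{\mathcal J^f}$ represents the functor of $\sigma$-quasistable sheaves with respect to $\E$, the sheaf $\psi_*\L^{\P,\E}_\psi$ induces a morphism on the open locus of $\wt\C^2$ over which its restriction to each fiber is $\sigma(0)$-quasistable (openness of fiberwise quasistability being standard for flat families of torsion-free rank-1 sheaves). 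The hypothesis at $A$ places $A$ in this open locus, yielding the first assertion.

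For the second assertion, I would verify fiberwise $\sigma(0)$-quasistability at every $A\in\wt\C^2$ by a case analysis on $\phi(A)$. When $\phi(A)$ lies in the smooth locus of $\C\times_B\C$ over $0\in B$, the fiber $X_A$ is isomorphic to $C$ with no extra $\psi|_{X_A}$-exceptional components, and $\psi|_{X_A*}(\L^{\P,\E}_\psi|_{X_A})$ reduces to $\P|_C(-Q_\gamma-Q_{\gamma'})\otimes\O_\C(Z^{\P,\E}_{\gamma,\gamma'})|_C$, which is $\sigma(0)$-quasistable by the very construction of $Z^{\P,\E}_{\gamma,\gamma'}$. When $\phi(A)=(R_1,R_2)$ with at least one $R_i$ irreducible, or with both $R_i$ reducible but $A$ non-distinguished, $A$ lies on strictly fewer than three exceptional divisors $C_{\gamma,\gamma',\phi}$, so $\psi|_{X_A}$ has fewer exceptional chains than at a distinguished specialization. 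Choosing a slice $\lambda$ through $A$, forming the $\lambda$-smoothing of $X_A$, and using the divisor identity \eqref{diag-type} together with the admissibility afforded by Theorem \ref{main2}, a variant of the calculation in Lemma \ref{mu*} matches the hypotheses of Lemma \ref{compadm} and delivers $\sigma(0)$-quasistability of $\psi|_{X_A*}(\L^{\P,\E}_\psi|_{X_A})$. At the distinguished points over pairs of reducible nodes, the hypothesis is invoked directly.

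The step I expect to carry the most technical weight is the verification in the second case above: the exceptional contributions to $\L^{\P,\E}_\psi|_{X_A}$ depend on which of the three potential divisors $C_{\gamma_1,\gamma_2,\phi},\, C_{\gamma_1,\gamma'_2,\phi},\, C_{\gamma'_1,\gamma_2,\phi}$ actually pass through $A$, and one must redo the bookkeeping underlying Lemma \ref{mu*} for each sub-configuration, checking that after dropping the absent exceptional terms the resulting divisor on $\psi(X_A)\simeq C$ still differs from $\O_\C(Z^{\P,\E}_{\gamma,\gamma'})|_C$ by a divisor supported on $\psi$-exceptional components, so that Lemma \ref{compadm} is applicable. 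The distinguished points absorb exactly the residual configuration where this bookkeeping cannot by itself force quasistability, and for them the hypothesis is used.
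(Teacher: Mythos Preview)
Your first paragraph is essentially the paper's argument: admissibility of $\L^{\P,\E}_\psi|_{X_A}$ gives base-change commutation (the paper cites \cite[Proposition~5.2]{CEP} for this, not Theorem~\ref{main2}, which is stated only for the Abel--Jacobi case), and then openness of quasistability (\cite[Proposition~34]{E01}) yields the open neighbourhood. Your appeal to Theorem~\ref{main2} ``adapted to $(\P,\E)$'' is unnecessary and slightly misleading here, since for general $(\P,\E)$ global admissibility is not proved in the paper; the point is that the lemma's hypothesis already supplies admissibility at $X_A$, which is all that is needed.

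For the second assertion you propose an explicit case analysis over the non-distinguished strata of $\wt\C^2$, redoing the bookkeeping of Lemma~\ref{mu*} in each sub-configuration and invoking Lemma~\ref{compadm}. The paper does not carry this out: it simply cites \cite[Lemma~6.3]{CEP}, which is precisely the reduction-to-distinguished-points statement you are sketching. So your approach is to reprove that external lemma in situ. This is legitimate in principle, but your sketch is not a proof: you have not actually verified the intermediate cases (points over $(R_1,R_2)$ with one $R_i$ irreducible, or non-distinguished points of the exceptional $\mathbb P^1$ over a pair of reducible nodes), and the bookkeeping there---tracking which divisors $C_{\gamma,\gamma',\phi}$ pass through $A$, what the fibre $X_A$ looks like, and how the exceptional contributions to $\L^{\P,\E}_\psi|_{X_A}$ simplify---is exactly the content of \cite[Lemma~6.3]{CEP}. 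If you want a self-contained argument, you must either reproduce that lemma or cite it as the paper does.
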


\begin{proof}
If $\L^{\P,\E}_\psi\otimes\O_{X_A}$ is $\psi|_{X_A}$-admissible, then its formation commutes with base change (see \cite[Proposition 5.2]{CEP}). Assume also that its push-forward via $\psi|_{X_A}$ is $\sigma(0)$-quasistable with respect to $\E|_{\psi(X_A)}$. Let $p_1\col\wt\C^2\times_B\C\ra \wt\C^2$   and $p_2\col\wt\C^2\times_B\C\ra \C$ be the projections onto the first and last factor and let $\sigma'$ be the section of $p_1$ obtained as pull-back of the section $\sigma$ of $\pi\col\C\ra B$. Since quasistability is an open property (see \cite[Proposition 34]{E01}), $\psi_*\L_\psi|_{p_1^{-1}(U)}$ is $\sigma'|_U$-quasistable with respect to $p_2^*\E$, for some open subset $U$ of $\wt\C^2$ containing $A$, and we conclude that $\alpha^2_{\P,\E}\circ \phi$ is defined on $U$. The second statement follows from \cite[Lemma 6.3]{CEP}.
\end{proof}

\begin{Thm}\label{main3}
Let $\pi\:\C\ra B$ be a smoothing  of a nodal curve $C$ with a section $\sigma$ through its smooth locus. Let $C_1,\dots,C_p$ be the irreducible components of $C$. Let $\P$ be a relative invertible sheaf on $\C/B$ of relative degree $2+f$ and $\E$ be a polarization of degree $f$ on $\C/B$. Let $\phi\:\wt\C^2\ra \C^2$ and $\psi\: \wt\C^3\ra\wt\C^2\times_B \C$ be good partial desingularizations.  
Let $R$ be a node of $C$ such that $R\in C_{\gamma}\cap C_{\gamma'}$, for $(\gamma,\gamma')\in \{1,\dots,p\}^2$ and $\gamma\ne \gamma'$ and 
$A\in\phi^{-1}(R,R)$ be a distinguished point of $\wt\C^2$. Assume that the following conditions hold
\begin{itemize}
\item[(i)]
there is a possibly empty subcurve $Y$ of $C$ with $C_{\gamma}\subseteq Y$ and $C_{\gamma'}\subseteq Y^c$,  and such that
\[
\O_\C(Z_{\gamma,\gamma}^{\P,\E})|_C\simeq \O_\C(Z_{\gamma,\gamma'}^{\P,\E}-Y)|_C;
\]
\item[(ii)]
 $\L^{\P,\E}_\psi\otimes \O_{X_A}$ is $\psi|_{X_A}$-admissible.
\end{itemize}
Then the push-forward of $\L^{\P,\E}_\psi\otimes \O_{X_A}$ via $\psi|_{X_A}$ 
 is $\sigma(0)$-quasistable with respect to $\E|_{\psi(X_A)}$. Hence, the map $\alpha^2\circ\phi\col\wt\C^2\dashrightarrow \ol{\mathcal J^f}$ induced by $\psi_*\L^{\P,\E}_\psi$ is defined on an open subset of $\wt\C^2$ containing $A$. 
The conditions (i) and (ii) are satisfied when $\deg_{C_i}\P=\deg_{C_i}\O_\C(2\sigma(B))$ for every $i\in\{1,\dots,p\}$, and $\E$ is a canonical polarization. 
 \end{Thm}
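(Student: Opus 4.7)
\smallskip

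The plan is to deduce the $\sigma(0)$-quasistability of $\psi|_{X_A*}(\L^{\P,\E}_\psi|_{X_A})$ from Lemma~\ref{compadm} by a careful comparison with the $\sigma(0)$-quasistable sheaf produced by Lemma~\ref{mu*} applied in the diagonal setting $R_1=R_2=R$; condition (i) plays the essential role of matching the two natural representatives that arise because $A$ lies in $\phi^{-1}(R,R)$. Lemma~\ref{ext-red} then extends $\alpha^2\circ\phi$ to a neighborhood of $A$, and the canonical-case verification at the end follows from Theorem~\ref{main2} and Lemma~\ref{adia-modif}.

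Set $\mu:=\psi|_{X_A}$. Condition (ii) supplies the $\mu$-admissibility of $\L^{\P,\E}_\psi|_{X_A}$ required by Lemma~\ref{compadm}. Up to exchanging $\gamma$ and $\gamma'$ (which swaps the two distinguished points in $\phi^{-1}(R,R)$), we may assume that $A\in C_{\gamma,\gamma,\phi}\cap C_{\gamma,\gamma',\phi}\cap C_{\gamma',\gamma,\phi}$, so that Lemma~\ref{mu*} applied with $(\gamma_1,\gamma_2)=(\gamma,\gamma)$ produces the $\sigma(0)$-quasistable sheaf
\[
\N_0 := (\psi_*\xi^*\P)|_C(-2Q_\gamma)\otimes\psi_*\O_{\wt\C^3}(D_0)|_C,
\]
with $D_0:=-\sum_m \alpha^{\P,\E}_{\gamma,\gamma,m}(C_{\gamma,\gamma,m,\psi}+C_{\gamma,\gamma',m,\psi}+C_{\gamma',\gamma,m,\psi})$. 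Fix a slice $\lambda\col B\ra\wt\C^2$ through $A$, with $\lambda$-smoothing $\rho\col\W\ra B$ of $X_A$ and induced map $\theta\col\W\ra\wt\C^3$.

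The central technical step is to exhibit a $P$-quasistable invertible sheaf $\M$ on $X_A$ (for a smooth point $P$ lifting $\sigma(0)$) such that $\M\otimes(\L^{\P,\E}_\psi|_{X_A})^{-1}$ is the restriction to $X_A$ of a divisor on $\W$ supported on $\mu$-exceptional components. This rests on three facts: (a) $A\in C_{i,k,\phi}$ precisely when $(i,k)\in\{(\gamma,\gamma),(\gamma,\gamma'),(\gamma',\gamma)\}$, so $C_{i,k,m,\psi}|_{X_A}$ vanishes outside this triple and the exceptional sum defining $\L^{\P,\E}_\psi$ collapses accordingly on $X_A$; (b) equation~\eqref{diag-type} reads $\theta^*(\I_{\wt\Delta_1|\wt\C^3}\otimes\I_{\wt\Delta_2|\wt\C^3})\simeq\O_\W(D_\Delta-\Gamma_1-\Gamma_2)$ with both $\Gamma_i|_{X_A}$ lying on the single $\mu$-exceptional component $E_{R,\gamma}$, giving a combined degree contribution of $-2$ there; (c) condition (i), together with the symmetry $Z^{\P,\E}_{\gamma,\gamma'}=Z^{\P,\E}_{\gamma',\gamma}$ (which yields $\alpha^{\P,\E}_{\gamma,\gamma',m}=\alpha^{\P,\E}_{\gamma',\gamma,m}$), encodes the two differences $\alpha^{\P,\E}_{\gamma,\gamma,m}-\alpha^{\P,\E}_{\gamma,\gamma',m}$ and $\alpha^{\P,\E}_{\gamma,\gamma,m}-\alpha^{\P,\E}_{\gamma',\gamma,m}$ as subcurve-valued divisors; these match exactly the exceptional-component contributions arising from (a) and (b). Lemma~\ref{compadm} then yields the $\sigma(0)$-quasistability of the pushforward, and Lemma~\ref{ext-red} extends $\alpha^2\circ\phi$ to an open neighborhood of $A$.

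For the canonical case at the end, condition (ii) follows directly from Theorem~\ref{main2}: the global $\psi$-admissibility of $\L_\psi$ restricts to $\mu$-admissibility of $\L_\psi|_{X_A}$ because every $\mu$-exceptional chain of $X_A$ is a $\psi$-exceptional chain of $\wt\C^3$. For condition (i), apply Lemma~\ref{adia-modif} with $(i,j,k)=(\gamma',\gamma,\gamma)$, valid since $R\in C_\gamma\cap C_{\gamma'}$: this yields $Y_0\subseteq C$ with $C_{\gamma'}\subseteq Y_0$ and $C_\gamma\subseteq Y_0^c$ such that $\O_\C(Z_{\gamma',\gamma})|_C\simeq\O_\C(Z_{\gamma,\gamma}-Y_0)|_C$; combining with $Z_{\gamma',\gamma}=Z_{\gamma,\gamma'}$ and setting $Y:=Y_0^c$ (noting that $\O_\C(C)|_C$ is trivial) yields (i). The main obstacle is the central technical step: tracking the cancellation between the exceptional-component corrections encoded by (c) and the $-2$-degree contribution of $\Gamma_1+\Gamma_2$ on $E_{R,\gamma}$ from (b), so as to certify that the chosen $\M$ is genuinely $P$-quasistable with respect to $\mu^*\E$.
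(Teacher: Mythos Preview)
Your framework is right—reduce via Lemma~\ref{compadm} to finding a $\mu^*\E$-quasistable sheaf $\M$ on $X_A$ differing from $\L^{\P,\E}_\psi|_{X_A}$ by exceptional twists, then invoke Lemma~\ref{ext-red}—and your canonical-case verification is fine. But the ``central technical step'' is not an obstacle to be noted; it is the entire content of the theorem, and your outline for it has a real gap.

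You propose to certify quasistability of $\M$ by comparison with the single sheaf $\N_0$ on $C$ coming from Lemma~\ref{mu*} at $(\gamma,\gamma)$. That does not work: $\N_0$ lives on $C$, not on $X_A=C(2)$, and there is no reason a pullback or naive lift of a quasistable sheaf on $C$ is quasistable on $C(2)$ for $\mu^*\E$. The paper's argument is different and uses condition~(i) in a way your sketch does not. One writes down the candidate $\wh N$ explicitly (it is $\theta^*\xi^*\P\otimes M(-\wh Y)|_{X_A}\otimes N$ with $\wh Y=L_0^Y\cup E_R$) and then verifies $0\le\beta_{\wh N}(V,\wh\E)\le k_V$ for each connected $V\subseteq X_A$ by hand. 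The key idea is to split $\psi(V)=V'\cup V''$ with $V'=\psi(V)\wedge Y$ and $V''=\psi(V)\wedge Y^c$, and to bound $\beta_{\wh N}$ from below and above using \emph{two} auxiliary $\sigma(0)$-quasistable sheaves on $C$: one built from the $(\gamma,\gamma)$ twist and one from the $(\gamma,\gamma')$ twist. Condition~(i) is exactly what makes both of these available and compatible. Your facts (a)--(c) correctly identify the bookkeeping on exceptional components, but they do not by themselves produce quasistability on $C(2)$; the missing ingredient is this two-sheaf sandwich over the $Y$/$Y^c$ decomposition.
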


\begin{proof}
Recall that there is an identification of $X_A$ with $C(2)$ and of $\psi(X_A)$ with $C$. Abusing notation, we use the same symbol for both $\psi$ and $\psi|_{X_A}$. We also use $\E$ to denote $\E|_{\psi(X_A)}$. 
Fix a slice $\lambda\:B\to\wt\C^2$ through $A$, let $\W\ra B$ be the $\lambda$-smoothing of $X_A$ and $\theta\col \W\ra \wt\C^3$ be the induced map 
 (see Diagram \eqref{diagram}). 
 By \eqref{diag-type}, there is a divisor $D_1$ of $\W$ supported on $\psi$-exceptional components such that 
 \begin{equation}\label{strong2}
  \theta^*\left(\I_{\wt\Delta_1|\wt\C^3}\otimes \I_{\wt\Delta_2|\wt\C^3}\right)\otimes \O_{X_A}\simeq \O_\W(D_1)|_{X_A} \otimes N,
  \end{equation}
  where $N$ is a line bundle on $X_A$ with degree $-1$ on $C_{\gamma}$ and $E_{R,\gamma'}$, and degree $0$ on the remaining components of $X_A$.
Set
\[
M:=\theta^*\O_{\wt\C^3}\left(-\sum_{m=1}^p \alpha^{\P,\E}_{\gamma,\gamma',m}(C_{\gamma,\gamma,m,\psi}+C_{\gamma,\gamma',m,\psi}+C_{\gamma',\gamma,m,\psi})\right)
\]
and
\[
M':=\theta^*\O_{\wt\C^3} \left(-\underset{C_m\subseteq Y}{\sum_{m\in\{1,\dots,p\}}}C_{\gamma,\gamma,m,\psi}\right).
\]
Recall the definition of canonical lifting introduced in Section \ref{Not1} and set
\[
\wh Y:=L_0^Y\cup E_R.
\]
 There is a divisor $D_2$ on $\W/B$ supported on $\psi$-exceptional components such that 
 \[
 M'\simeq\O_\W(D_2)\otimes \O_\W(-\wh Y)
 \]
 Set $D:=D_1+D_2$. Since $\O_\C(Z_{\gamma,\gamma'}^{\P,\E})|_C\simeq \O_\C(Z_{\gamma',\gamma}^{\P,\E})|_C$, by Condition (i) we get
 \begin{equation}\label{thetaL}
\theta^*\L^{\P,\E}_\psi \otimes \O_{X_A} \simeq (\O_\W(D)\otimes \theta^*\xi^*\P\otimes M(-\wh Y))|_{X_A}\otimes N.
\end{equation}
Set $\wh\E:=\xi^*\E|_{X_A}$. 
Since $\L^{\P,\E}_\psi\otimes \O_{X_A}$ is $\psi$-admissible by Condition (ii), it turns out from Lemma \ref{compadm} and from Equation \eqref{thetaL} that to conclude our proof, it suffices to show that the invertible sheaf
\[
\wh N:=(\theta^*\xi^*\P\otimes M(-\wh Y))|_{X_A}\otimes N
\] 
on $X_A$ is $\psi^{-1}(\sigma(0))$-quasistable with respect to $\wh\E$.

Let $V$ be a connected subcurve of $X_A$ with connected complementary subcurve. Consider the (possibly empty) subcurves of $C$ 
\[
V':=\psi(V)\wedge Y \; \text{ and } \; V'':=\psi(V)\wedge Y^c.
\]
We have $\psi(V)=V'\cup V''$. Notice that $\deg_{E_{R,\gamma}} N(-\wh Y)=\deg_{E_{R,\gamma'}}N(-\wh Y)=0$. For every node $S$ of $C$ such that $R\ne S\in C_m\cap C_n\cap Y\cap Y^c$, where $m\ne n$, we have
  \begin{equation}\label{ESh}
 \deg_{E_{S,m}}N(-\wh Y)=
\begin{cases} 
\begin{array}{ll}
-1 & \text{ if } C_m \subseteq Y;\\
0 & \text{ otherwise. }
 \end{array}
 \end{cases}
 \end{equation}
Thus, the degree of $\wh N$ on every chain of $\psi$-exceptional components is $-1$ or $0$ (by Lemma \ref{mu*}, $\theta^*\xi^*\P\otimes M$ has degree $0$ on $\psi$-exceptional components). Therefore, if $V$ is contracted by $\psi$, then $\wh N$ is $\psi^{-1}(\sigma(0))$-quasistable over $V$ with respect to $\wh\E$. We may assume that $V$ and $V^c$ are not contracted by $\psi$. In particular, if we set 
\[
E_V:=\bigcup_{S\in V'\cap V''} E_S,
\]
then $E_V\subseteq V$.
We may also assume that $C_{\gamma'}\subseteq V^c$.  

Since $C_{\gamma'}\subseteq V^c$, the subcurve $\psi(V)$ does not cross $R$, and hence $R\not\in V'\cap V''$. Using \eqref{ESh}, 
 we see that $\deg_{E_V}\wh N=-\# V\cap V''$; since $E_V$ is the union of $\#(V'\cap V'')$ chains of rational curves, we get $\beta_{\wh N}(E_V, \wh\E)=0$. 
Consider the subcurves of $X_A$
\[
V_1:=L_2^{V'}\cup L_2^{V''} \; \text{ and } \;  V_2:=L_0^{V'}\cup L_0^{V''}\cup E_V.
\]
Using that $E_V\subset V$, we have $g_{V_1}=g_{V}=g_{V_2}$ and hence
\[\beta_{\wh N}(V_1, \wh\E)\le\beta_{\wh N}(V, \wh\E)\le \beta_{\wh N}(V_2,  \wh\E).
\]
Recall that there is an identification of $\psi(X_A)$ with $C$. Let $Q_{\gamma}$ and $Q_{\gamma'}$ be smooth points of $C$ lying on $C_{\gamma}$ and $C_{\gamma'}$ and define the invertible sheaves $N'$ and $N''$ on $C$ 
\[
N':=(\psi_*\theta^*\P)|_C(-2Q_{\gamma})\otimes \psi_*(M|_{X_A})\otimes \O_\C(-Y)|_C; \\ 
\]
\[
N'':=(\psi_*\theta^*\P)|_C(-Q_{\gamma}-Q_{\gamma'}) \otimes  \psi_*(M|_{X_A}).
\]
It follows from Lemma \ref{mu*} and Condition (i) that $N'$ and $N''$ are $\sigma(0)$-quasistable invertible sheaves on $C$ with respect to $\E$. 
Since $g_{L_0^{V'}}=g_{L_2^{V'}}=g_{V'}$ and since the degree of $\xi^*\E$ on $\psi$-exceptional components is $0$, by construction we get
\[
\beta_{\wh N}(L_0^{V'},\wh\E)=\beta_{N'}(V', \E) \; \text{ and } \; \beta_{\wh N}(L_2^{V'},\wh\E)=\beta_{N''}(V',\E),
\]
and similarly also
\[
\beta_{\wh N}(L_0^{V''},\wh\E)=\beta_{N''}(V'',\E) \; \text{ and } \;\beta_{\wh N}(L_2^{V''}, \wh\E)=\beta_{N'}(V'',\E). 
\]
Notice that  $\beta_{\wh N}(L_2^{V'}\wedge L_2^{V''},\wh\E)=0$, because $L_2^{V'}\wedge L_2^{V''}=E_V$. Therefore, we get
\[
\begin{array}{lll}
\beta_{\wh N}(V,\wh\E) & \ge &  \beta_{\wh N}(V_1, \wh\E) \\
 & = & \beta_{\wh N}(L_2^{V'},\wh\E)+\beta_{\wh N}(L_2^{V''},\wh\E)-\beta_{\wh N}(L_2^{V'}\wedge L_2^{V''},\wh\E)\\
 & = & \beta_{N''}(V',\E)+\beta_{N'}(V'',\E) \\
 & \ge & 0
\end{array}
\]
where the last inequality is strict if $\sigma(0)\in V$.  We also get
\[
\begin{array}{lll}
\beta_{\wh N}(V,\wh\E) & \le &  \beta_{\wh N}(V_2,\wh\E) \\
 & = &  \beta_{\wh N}(L_0^{V'},\wh\E)+\beta_{\wh N}(L_0^{V''}\cup E_V,\wh\E)-\#(V'\cap V'')\\
 & = & \beta_{\wh N}(L_0^{V'},\wh\E)+\beta_{\wh N}(L_0^{V''},\wh\E)-2\,\#(V'\cap V'')\\
  & = & \beta_{N'}(V',\E)+\beta_{N''}(V'',\E) - 2\,\#(V'\cap V'') \\
  & \le & k_{V'}+k_{V''}-2\,\#(V'\cap V'') \\
  & = & k_{\psi(V)} \\
  & = & k_V
\end{array}
\]
where the last inequality is strict if $\sigma(0)\not\in V$ and the last equality holds because $Z$ is connected. 

The last two sentences follow from Lemmas \ref{ext-red}, \ref{adia-modif} and \ref{main2}.
\end{proof}

\begin{Rem}
We warn the reader that the proof of Theorem \ref{main3} can not easily modify to prove a similar result for more general distinguished points.
In the proof we strongly use the obvious fact that $\O_\C(Z^{\P,\E}_{\gamma,\gamma'}-Z^{\P,\E}_{\gamma',\gamma})|_C$ is trivial. If $R_1$ and $R_2$ are nodes of $C$ such that $R_1\in C_{\gamma_1}\cap C_{\gamma'_1}$ and $R_2\in C_{\gamma_2}\cap C_{\gamma'_2}$, in general 
$\O_\C(Z^{\P,\E}_{\gamma_1,\gamma'_2}-Z^{\P,\E}_{\gamma'_1,\gamma_2})|_C$ is not trivial. Thus, one should be able to describe such an invertible sheaf, possibly giving rise to difficult combinatorial issues. For this reason, in the next sections we will use a different approach.
 \end{Rem}

    \section{On some properties of distinguished points}
    \noindent    
Let $\pi\: \C\ra B$ be a smoothing of a nodal curve $C$ and 
$\phi\:\wt\C^2\ra \C^2$ be a good partial desingularization.          
In this section, we will introduce the notions of quasistable and synchronized distinguished points of $\wt\C^2$ via purely combinatorial conditions on $C$ and $\phi$. The key ingredient in the proof of Theorem \ref{main4} will be the following result: if a distinguished point is quasistable, then it is synchronized. The main goal of this section will be Propositions \ref{PropA} and \ref{PropB}, giving necessary conditions for the proof of this implication.

\subsection{More notation and terminology}\label{Not3} 

Keep the notation of Sections \ref{Not2} and \ref{Abel-maps}. As depicted in Figure 3, we will let $A$ be a distinguished point of $\wt\C^2$, with 
\[
A\in C_{\gamma_1,\gamma_2,\phi}\cap C_{\gamma_1,\gamma'_2,\phi}\cap  
 C_{\gamma'_1,\gamma_2,\phi},
\]
where $(\gamma_1,\gamma'_1)$ and $(\gamma_2,\gamma'_2)$ are in $\{1,\dots,p\}^2$. We will set
 \[
 \P_A:=\{(\gamma_1,\gamma_2),(\gamma_1,\gamma'_2),(\gamma'_1,\gamma_2)\}.
 \]
 Throughout the section, we will assume that 
 \[
 \phi(A)=(R_1,R_2), \text{ with } R_1,R_2\in \N(C) \; \text{ and } \;R_1\ne R_2.
 \]
Notice that $R_1\in C_{\gamma_1}\cap C_{\gamma'_1}$ and $R_2\in C_{\gamma_2}\cap C_{\gamma'_2}$. 
Recall the identification of $X_A$ with $C(2)$ and of $\psi(X_A)$ with $C$, and of the map $\mu\: C(2)\ra C$ with $\psi|_{X_A}\:X_A\ra \psi(X_A)$. 
\[
\begin{xy} <30pt,0pt>:
(-4.5,0.4)*{}="b";
(0,1.4)*{}="c";
(0,0.5)*{\bullet}="d";
"b"+(-0.2,-0.2);"b"+(0.7,0.7)**\dir{-};
"b"+(-0.2,0.2);"b"+(0.7,-0.7)**\dir{-};
"b"+(1.5,0.6);"b"+(0,0.6)**\crv{"b"+(0.7,0.3)};
"b"+(1.5,-0.6);"b"+(0,-0.6)**\crv{"b"+(0.7,-0.3)};
"b"+(0.72,0.2)*{\scriptstyle{E_{R_t,\gamma'_t}}};
"b"+(0.67,-0.2)*{\scriptstyle{E_{R_t,\gamma_t}}};
"b"+(1.8,0.6)*{\scriptstyle{\wh C_{\gamma'_t}}};
"b"+(1.8,-0.6)*{\scriptstyle{\wh C_{\gamma_t}}};
"d"+(-0.8,0);"d"**\dir{-};
"d"+(0,-0.8);"d"**\dir{-};
"d"+(0.6,0.6);"d"**\dir{-};
"d"+(0.2,0)*{\scriptstyle{A}};
"d"+(1,-0.2)*{\scriptstyle{C_{\gamma'_1,\gamma_2,\phi}}};
"d"+(-0.2,0.4)*{\scriptstyle{C_{\gamma_1,\gamma'_2,\phi}}};
"d"+(-0.6,-0.4)*{\scriptstyle{C_{\gamma_1,\gamma_2,\phi}}};
"b"+(1,-1.1)*{C(2)=X_A\subset \wt\C^3};
"d"+(0,-1.2)*{A\in\wt\C^2};
"d"+(-1.8,0)*{\stackrel{p_1\circ\psi}{\lra}};
"d"+(-1.7,-2)*{\text{\bf Figure 3. } \text{The local picture of $A$ and the fiber $X_A=p_1\circ\psi^{-1}(A)$}.}
\end{xy}
\]

For each $t$ in $\{1,2\}$, let $\wh\T^1_{R_t}$ be the nested sets of 1-tails of $X_A=C(2)$ associated to 
 $E_{R_t,\gamma_t}$. We will set 
\[
\wh\T^1_A:=\wh\T^1_{R_1}\sqcup\wh{\T}^1_{R_2}.
\] 
 Let $\wh\T^s_A$ be the nested set of $s$-tails of $X_A=C(2)$ associated to $(E_{R_1,\gamma_1}, E_{R_2,\gamma_2})$, for each $s$ in $\{2,3\}$. 
We will set
\[
\wh\T_A:=\wh\T^1_A\sqcup \wh\T^2_A\sqcup \wh\T^3_A.
\]
Moreover, let $\T^s_{\alpha,\beta}$ be the nested sets of tails of $\psi(X_A)=C$ associated to $(C_\alpha,C_\beta)$, for each $(\alpha,\beta)$ in $\Pa_A$ and $s$ in $\{1,2,3\}$. We will set
 \[
 \begin{array}{lll}
 \T^s_A:=\T^s_{\gamma_1,\gamma_2} \sqcup \T^s_{\gamma_1,\gamma'_2} \sqcup\T^s_{\gamma'_1,\gamma_2} \quad\text{ and }\quad \T_A:=\T^1_A\sqcup \T^2_A\sqcup \T^3_A.
 \end{array}
 \]
 
For each $s$ in $\{2,3\}$, we define $m_s:=\#\wh\T^s_A-1$, and 
we will write 
\[
\wh\T^s_A=\{Y^s_0,\dots,Y^s_{m_s}\}.
\]   
Moreover, we will write
\[
\begin{array}{lll}
\T^s_{\gamma_1,\gamma_2}=\{W^s_0,W^s_3,\dots\}, & 
\T^s_{\gamma_1,\gamma'_2}=\{W^s_1,W^s_4,\dots\}, & \T^s_{\gamma'_1,\gamma_2}=\{W^s_2,W^s_5,\dots\},
\end{array}
\]
when these sets are nonempty. We have 
   $Y^s_{t-1}\prec Y^s_t$ for every $t$ in $\{1,\dots,m_s\}$, and 
 $W^s_{t-3}\prec W^s_t$, for every subset $\{W^s_{t-3},W^s_t\}$ of $\T^s_A$.

    \subsection{Quasistable and synchronized distinguished points}\label{quas-sync-dist}
    Keep the notation of Section \ref{Not3}.  We say that  $A$ is \emph{quasistable} if the following two conditions hold
\[
\begin{array}{l}
\#(\{R_1,R_2\}\cap (\cup_{W\in\T^2 _{\gamma_1,\gamma_2}\cup \T^3_{\gamma_1,\gamma_2}}\term_W))\le 1; \\
\#(\{R_1,R_2\}\cap (\cup_{W\in\T^2_{\gamma'_1,\gamma'_2}\cup \T^3_{\gamma'_1,\gamma'_2}}\term_W))\le 1.
\end{array}
\]

    We say that $A$ is \emph{$s$-synchronized}, for $s\in\{1,2,3\}$,  if the set function sending a tail $Y$ of $C(2)$  to the tail $\mu(Y)$ of $C$ induces a bijection
 \[
 \wh{\T}^s_A\lra \T^s_A.
 \]
  Also, $A$ is  \emph{synchronized} if  
 $A$ is  $s$-synchronized for each $s\in\{1,2,3\}$.      

\smallskip

Let us collect some easy consequences of the definitions.
   If $A$ is $s$-synchronized, then for every node $S$ of $C$ we have
 \begin{equation}\label{dtA}
 d_{\T^s_A,S}= \underset{\wh{S}\in \mu^{-1}(S)}{\sum} d_{\wh{\T}^s_A,\wh{S}}.
\end{equation}
In particular, if $A$ is 2-synchronized, then for every $t$ in $\{0,\dots,m_3\}$ we have
 \begin{equation}\label{ne3}
  d_{\T^2_A,S}\ne 3, \text{ for every } S\in\term_{\mu(Y^3_t)}.
  \end{equation}
  
The following property holds for every $Y$ in $\wh\T^s_A$ and $Y'$ in $\wh\T^{s'}_A$ with $\{s,s'\}=\{2,3\}$
\begin{equation}\label{RtTerm}
\text{ if } C_{\gamma'_t}\subseteq \mu(Y), \text{ for some } t\in \{1,2\}, \text{ then } R_t\not\in \term_{\mu(Y)}\cap \term_{\mu(Y')}.
\end{equation}
 Indeed, suppose that $C_{\gamma'_t}\subseteq \mu(Y)$ and $R_t\in \term_{\mu(Y)}\cap \term_{\mu(Y')}$. By Lemma \ref{free-perf} the pair $(\mu(Y),\mu(Y'))$ is perfect. Since by definition we have $E_{R_t,\gamma_t}\subseteq Y\wedge Y'$, we see that $C_{\gamma'_t}\subseteq \mu(Y')^c$. Thus, the unique possibility is that  $\mu(Y)$ and $\mu(Y')^c$ contain each other, hence $\{R_1,R_2\}$ is contained in $\term_{\mu(Y)}$ and in $\term_{\mu(Y')}$, contradicting the fact that $\{k_{\mu(Y)},k_{\mu(Y')}\}=\{2,3\}$.
 
 Suppose now that $A$ is quasistable and there are $W$ in $\T^2_{\alpha,\beta}$ and $W'$ in $\T^3_{\alpha',\beta'}$ such that  $R_1\in\term_W\cap \term_{W'}$, for $(\alpha,\beta)$ and $(\alpha',\beta')$ in $\P_A$. Then we have
\begin{equation}\label{alphabeta}
\begin{array}{lll}
(\alpha,\beta)=(\gamma_1,\gamma'_2) &  \text{and } &  \term_W=\{R_1,R_2\} \\
(\alpha',\beta')=(\gamma_1,\gamma_2) &   \text{and } & \term_{W'}\cap \{R_1,R_2\}=\{R_1\}.
\end{array}
\end{equation}
Indeed, first notice that $(\alpha,\beta)\ne(\alpha',\beta')$, because $R_1\in\term_W\cap \term_{W'}$, and hence $C_{\gamma_1}\subseteq W\cup W'$. Arguing as in the previous paragraph, we see that $C_{\gamma_1}\subseteq W \cap W'$ and that $W$ and $W'$ contain each other. In particular, we have 
\[
\{(\alpha,\beta),(\alpha',\beta')\}=\{(\gamma_1,\gamma_2),(\gamma_1,\gamma'_2)\}.
\] 
If $W$ crosses $R_2$, then by Lemma \ref{maxT2} there are tails both in 
$\T^2_{\gamma_1,\gamma_2}$ and in $\T^2_{\gamma_1,\gamma'_2}$ admitting $R_1$ as terminal point, which is a contradiction, because $R_1\in\term_{W'}$. Thus, we have $\term_W=\{R_1,R_2\}$. Since $A$ is quasistable, we see that $(\alpha,\beta)=(\gamma_1,\gamma'_2)$; since $W$ and $W'$ contain each other, the tail $W'$ crosses $R_2$.

    \smallskip
    
    Let $s$ be in $\{2,3\}$. Define $\zeta_s$ as follows: If $s=2$, set $\zeta_2:=0$; if $s=3$, we let $\zeta_3$ be the maximal positive integer greater or equal to 3 (if it exists) such that 
     \begin{equation}\label{zeta-cond}
     W^3_{t-3}=W^3_{t-2}=W^3_{t-1}, \text{ for every } t\in\{3,\dots,\zeta_s\}  \text{ with } t\equiv0(3),
     \end{equation}
     and set $\zeta_3:=0$ otherwise. Notice that $\zeta_s\equiv 0(3)$. 
 Fix $(\alpha,\beta)\in \P_A$. We define
\[
t_0:=
\begin{cases}
\begin{array}{ll}
0 & \text{ if } (\alpha,\beta)=(\gamma_1,\gamma_2); \\ 
1 & \text{ if } (\alpha,\beta)=(\gamma_1,\gamma'_2); \\
2 & \text{ if } (\alpha,\beta)=(\gamma'_1,\gamma_2).
\end{array}
\end{cases}
\]
In what follows, if either $\T^s_{\alpha,\beta}=\emptyset$ (and hence $\zeta_s=0$), or $\zeta_s\ge 3$  and $W^s_{\zeta_s+t_0-3}$ is the maximal element of $\T^s_{\alpha,\beta}$,  
then we will set
 \[
 W^s_{\zeta_s+t_0}:=C.
 \]

\begin{Lem}\label{LemB}
Assume that $A$ is quasistable. The following properties hold.
\begin{itemize}
\item
[(i)]
If $d_{\T^2_A,R_1}\ne 0$ and $S$ is a node of $C$ such that $d_{\T^2_A,S}\not\equiv 0(3)$, then either $S\in\{R_1,R_2\}$ or $S$ is the difference node of $\T^2_{\gamma_1,\gamma_2|\gamma'_1,\gamma_2}$ distinct from $R_1$.
\item[(ii)]
If $d_{\T^2_A,R_1}\ne 0$, $d_{\T^3_A,R_1}\ne 0$, then 
\[
\T^2_{\gamma_1,\gamma_2}=\T^2_{\gamma'_1,\gamma_2}=\T^2_{\gamma_1,\gamma'_2}\setminus\{W^2_1\}\;\text{ and } \;\term_{W^2_1}=\{R_1,R_2\},
\]
where $W^2_1$ is the minimal tail of $\T^2_{\gamma_1,\gamma'_2}$.
\end{itemize}
\end{Lem}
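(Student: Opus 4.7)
The plan is to reduce both statements to combinatorial assertions about the chains supplied by Proposition~\ref{Diff}, using relation~\eqref{alphabeta} and the $\T^s$-freeness built into the definitions of the nested sets to activate quasistability. I prove (ii) first, since the structure it exposes informs (i).

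For (ii), the hypotheses $d_{\T^2_A,R_1}\ne 0$ and $d_{\T^3_A,R_1}\ne 0$ supply tails $W\in\T^2_A$ and $W'\in\T^3_A$ with $R_1\in\term_W\cap\term_{W'}$, and applying~\eqref{alphabeta} pins $W\in\T^2_{\gamma_1,\gamma'_2}$ with $\term_W=\{R_1,R_2\}$ together with $W'\in\T^3_{\gamma_1,\gamma_2}$ such that $\term_{W'}\cap\{R_1,R_2\}=\{R_1\}$. The $\prec$-nesting in $\T^2_{\gamma_1,\gamma'_2}$ forces $W=W^2_1$. Next, I would establish $\T^2_{\gamma_1,\gamma'_2|\gamma_1,\gamma_2}=\{W^2_1\}$: by case~(i) of Proposition~\ref{Diff}, $W^2_1$ is the minimum of this chain, since it is the unique tail of the union not containing $C_{\gamma_2}$. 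Any further element $U$ would be $(W^2_1)$-terminal, but the $\T^2_{\gamma_1,\gamma_2}$-freeness of $W'$ rules out $R_1$ from the terminals of $\T^2_{\gamma_1,\gamma_2}$, quasistability on $(\gamma_1,\gamma_2)$ then rules out $R_2$, and $\prec$-nesting in $\T^2_{\gamma_1,\gamma'_2}$ rules out any $W^2_{1+3k}$ with $k\ge 1$; so no such $U$ exists. To upgrade to $\T^2_{\gamma'_1,\gamma_2}=\T^2_{\gamma_1,\gamma_2}$, I apply Proposition~\ref{Diff} to the $\gamma_2$-sharing pair: the symmetric difference would contain a tail $V\in\T^2_{\gamma'_1,\gamma_2}$ with $R_1\in\term_V$. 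If $\term_V=\{R_1,R_2\}$, then $V=(W^2_1)^c$ by matching terminals and containments, and $\sigma(0)\in(W^2_1)^c$ clashes with $\sigma(0)\in V^c$; if $\term_V=\{R_1,T\}$ with $T\ne R_2$, then $V\supseteq C_{\gamma'_1}\cup C_{\gamma'_2}$ and Lemma~\ref{maxT2} feeds a tail into $\T^2_{\gamma'_1,\gamma'_2}$ that, together with a $3$-tail analog of $W'$, contradicts the second quasistability condition.

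For (i), since $S\ne R_1,R_2$ and $d_{\T^2_A,S}\in\{1,2\}$, the node $S$ sits in one of the three symmetric-difference chains $\T^2_{\alpha,\beta|\alpha',\beta'}$ as a terminal of one of its elements. Remark~\ref{Rem2} shows that terminal points of middle elements of such a chain lie in $\bigcup\term$ of both participating sets; hence if $S$ is terminal of a middle element, it appears in at least two of the three sets, and the failure to also appear as a terminal in the third set forces $S$ to be an extremal (non-middle) difference node. Combining $d_{\T^2_A,R_1}\ne 0$ with quasistability then rules out all chains but $\T^2_{\gamma_1,\gamma_2|\gamma'_1,\gamma_2}$ as producers of such a node distinct from $R_1,R_2$, pinning $S$ as the claimed difference node. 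The main obstacle is the $\term_V=\{R_1,T\}$, $T\ne R_2$ subcase of (ii), which forces activation of the second quasistability condition (on $(\gamma'_1,\gamma'_2)$, outside $\P_A$) by locating the correct $3$-tail in $\T^3_{\gamma'_1,\gamma'_2}$ playing the role of $W'$; the enumeration in (i), by contrast, is routine once the middle-element observation is in place.
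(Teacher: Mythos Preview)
Your argument for~(ii) has a fixable gap in step~3, and your argument for~(i) has a more serious one. For~(ii), the subcase $\term_V=\{R_1,T\}$ with $T\ne R_2$ that you single out as the main obstacle is in fact vacuous: once $W'\in\T^3_{\gamma_1,\gamma_2}$ has $R_1\in C_{\gamma_1}\cap C_{\gamma'_1}$ as a terminal, relation~\eqref{diff-rel} (for $(i,j,k)=(\gamma_1,\gamma'_1,\gamma_2)$) forbids any further tail of $\T_{\gamma_1,\gamma_2}\cup\T_{\gamma'_1,\gamma_2}$ from terminating in $C_{\gamma_1}\cap C_{\gamma'_1}$, so no $2$-tail in $\T^2_{\gamma_1,\gamma_2}\cup\T^2_{\gamma'_1,\gamma_2}$ omits $C_{\gamma_1}\cup C_{\gamma'_1}$ and Proposition~\ref{Diff}(i) gives $\T^2_{\gamma_1,\gamma_2}=\T^2_{\gamma'_1,\gamma_2}$ immediately. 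Your proposed detour via the second quasistability condition does not work as written---the required ``$3$-tail analog of $W'$'' in $\T^3_{\gamma'_1,\gamma'_2}$ would have to contain $C_{\gamma'_1}$, which $W'$ does not, and no substitute is produced---but this is moot. The paper proves~(ii) after~(i) by exactly this use of~\eqref{diff-rel}, combined with the relation $\T^2_{\gamma'_1,\gamma_2}=\T^2_{\gamma_1,\gamma'_2}\setminus\{W^2_1\}$ obtained as a byproduct of the proof of~(i).

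For~(i), the assertion that quasistability ``rules out all chains but $\T^2_{\gamma_1,\gamma_2|\gamma'_1,\gamma_2}$ as producers of such a node'' is unproved and not immediate; the argument needs a case split on whether $d_{\T^2_A,R_2}=0$, which you do not make. When $d_{\T^2_A,R_2}=0$, Proposition~\ref{Diff}(i) gives $\T^2_{\gamma_1,\gamma_2}=\T^2_{\gamma_1,\gamma'_2}$ directly (no tail of either can omit $C_{\gamma_2}\cup C_{\gamma'_2}$), leaving only the claimed chain. When $d_{\T^2_A,R_2}\ne 0$, the paper first intersects tails carrying $R_1$ and $R_2$ as terminals to produce a $2$-tail $W$ with $\term_W=\{R_1,R_2\}$, uses quasistability to identify $W=W^2_u$ for some $u\in\{1,2\}$, and then shows $\T^2_{\gamma'_u,\gamma_{3-u}}=\T^2_{\gamma_u,\gamma'_{3-u}}\setminus\{W^2_u\}$; for $S\notin\{R_1,R_2\}$ this collapses $d_{\T^2_A,S}\bmod 3$ to a single pairwise comparison, to which Remark~\ref{Rem2} applies. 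Your middle-element observation from Remark~\ref{Rem2} is correct and needed at the end, but without the structural reduction above the argument does not close.
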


\begin{proof}
Let $S$ be a node of $C$ such that $d_{\T^2_A,S}\not\equiv0(3)$. 
If $d_{\T^2_A,R_1}\ne0$ and $d_{\T^2_A,R_2}=0$, then $S$ is a difference node of $\T^2_{\gamma_1,\gamma_2|\gamma'_1,\gamma_2}$, by Proposition \ref{Diff}(i). If $d_{\T^2_A,R_1}\ne0$ and $d_{\T^2_A,R_2}\ne0$, then there is a 2-tail $W$ with $R_1$ and $R_2$ as terminal points. Indeed, if $W_t$ is a tail of $\T^2_A$ such that $\{R_1,R_2\}\cap \term_{W_t}=\{R_t\}$, for each $t\in\{1,2\}$, then $W_1\wedge W_2$ is a 2-tail as required. Since $A$ is quasistable, we have 
\[
W=W^2_u, \text{ for some } u\in\{1,2\},
\] 
where $W^2_u$ is the minimal tail of $\T^2_{\gamma_u,\gamma'_{3-u}}$.
 Since $W^2_{u+3}$ crosses $R_1$ and $R_2$, we get 
\begin{equation}\label{W2u}
W^2_{3-u}\subseteq W^2_{u+3}.
\end{equation}
If a tail $W'$ of $\T^2_A$ admits $R_u$ as a terminal point, then $C_{\gamma_u}\subseteq W'$, otherwise by Lemma \ref{free-perf} the pair $(W^2_u,W')$ would be terminal, with the unique possibility that  $W^2_u$ and $(W')^c$ contain each other, implying $W^2_u=(W')^c$, a contradiction. Similarly, if $R_{3-u}\in\term_{W'}$, for some $W'\in\T^2_A$, then $C_{\gamma'_{3-u}}\subseteq W'$. Thus, the other inclusion in \eqref{W2u} holds and hence
\begin{equation}\label{minW2u}
\T^2_{\gamma'_u,\gamma_{3-u}}=\T^2_{\gamma_u,\gamma'_{3-u}}\setminus\{W^2_u\},
\end{equation}
which concludes the proof of the first part item (i). 

Suppose now that $d_{\T^2_A,R_1}\ne 0$ and $d_{\T^3_A,R_1}\ne 0$. Let $W\in\T^2_{\alpha,\beta}$ and $W'\in\T^3_{\alpha',\beta'}$ such that $R_1\in\term_W\cap \term_{W'}$, for $(\alpha,\beta)$ and $(\alpha',\beta')$ in $\P_A$. The properties stated in \eqref{alphabeta} hold. Arguing as in the first part of the proof, we get that \eqref{minW2u} holds with $u=1$. Moreover, $W^2_0$ and $W^2_2$ cross both $R_1$ and $R_2$, the tail $W^2_0$ because $(\alpha',\beta')=(\gamma_1,\gamma_2)$ and by \eqref{diff-rel}, and $W^2_2$ by \eqref{minW2u}, and hence
 $\T^2_{\gamma_1,\gamma_2}=\T^2_{\gamma'_1,\gamma_2}$.
\end{proof}

\begin{Lem}\label{LemD}
Assume that $A$ is quasistable and 2-synchronized. Assume that $d_{\T^2_{\gamma_t,\gamma'_{3-t}},R_1}\ne 0$, for some $t$ in $\{1,2\}$, and that $d_{\T^2_A,R_2}=0$. Consider the set
\[
\R=\{W\in\T^3_A : R_1\not\in\term_W \text{ and } R_2\in\term_W\}.
\] 
Assume that at least one of the following properties holds
\begin{itemize}
\item[(i)]
there is $Y\in\wh\T^3_A$ such that 
$R_1\not\in\term_{\mu(Y)}$ and $R_2\in\term_{\mu(Y)}$;
\item[(ii)]
$\R$ is nonempty.
\end{itemize}
Then $\R$ admits a minimal tail $W$, which is the minimal tail of $\T^3_{\gamma_t,\gamma'_{3-t}}$, and if $Y$ is as in (i), then $W\subseteq \mu(Y)$. If $S$ is a node of $C\setminus\{R_1\}$ such that $d_{\T^2_A,S}\not\equiv0(3)$, then
\[
S\in W\;  \text{ and }\; d_{\T^2_{\gamma_t,\gamma'_{3-t}},S}=0.
\]
\end{Lem}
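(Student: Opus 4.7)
The plan is to reduce hypothesis (i) to hypothesis (ii), and then to analyze $\R$ directly under (ii). For the reduction, given $Y \in \wh\T^3_A$ satisfying (i), I would first verify that $\mu(Y)$ is a $3$-tail of $C$ containing $C_{\gamma_t}\cup C_{\gamma'_{3-t}}$ with $\sigma(0) \in \mu(Y)^c$, exploiting that $Y$ contains $E_{R_1,\gamma_1}\cup E_{R_2,\gamma_2}$ and that the terminal-point count is preserved under $\mu$ precisely because $R_1 \notin \term_{\mu(Y)}$ (so the chain at $R_1$ contributes no new boundary) while $R_2 \in \term_{\mu(Y)}$. Applying Lemma \ref{maxT3} to $\mu(Y)$ for the pair $(C_{\gamma_t},C_{\gamma'_{3-t}})$ produces a $\mu(Y)$-terminal tail $W'$. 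Since $R_2 \in \term_{\mu(Y)}$ forces $R_2 \in \term_{W'}$, the assumption $d_{\T^2_A,R_2}=0$ rules out $W' \in \T^2_{\gamma_t,\gamma'_{3-t}}$; hence $W' \in \T^3_{\gamma_t,\gamma'_{3-t}}$ with $W' \subseteq \mu(Y)$. This $W' \in \R$ places us in case (ii), and the inclusion $W' \subseteq \mu(Y)$ will supply the corresponding claim in the conclusion.

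Assuming (ii), pick any $W \in \R$, say $W \in \T^3_{\alpha,\beta}$ with $(\alpha,\beta) \in \P_A$. A direct inspection shows that $R_1 \notin \term_W$ together with $C_\alpha \subseteq W$ (and the fact that $R_1 \in C_{\gamma_1}\cap C_{\gamma'_1}$) forces $C_{\gamma_1}\cup C_{\gamma'_1} \subseteq W$, while $R_2 \in \term_W$ with $C_\beta \subseteq W$ allows exactly one of $C_{\gamma_2},C_{\gamma'_2}$ to lie in $W$. Quasistability of $A$ excludes certain simultaneous $R_1$-terminations across pairs in $\P_A$ (via the arguments used in the proof of Lemma \ref{LemB}), and combined with the asymmetric hypothesis $d_{\T^2_{\gamma_t,\gamma'_{3-t}},R_1}\neq 0$, the only possibility is $(\alpha,\beta)=(\gamma_t,\gamma'_{3-t})$. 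Since $\T^3_{\gamma_t,\gamma'_{3-t}}$ is totally ordered by inclusion, $\R \subseteq \T^3_{\gamma_t,\gamma'_{3-t}}$ is also totally ordered, admitting a minimum $W$; and by the reduction step, this $W$ is contained in $\mu(Y)$ whenever (i) holds. The minimum of $\R$ coincides with the minimum of $\T^3_{\gamma_t,\gamma'_{3-t}}$ because any strictly smaller element of $\T^3_{\gamma_t,\gamma'_{3-t}}$ would still satisfy the $\R$-conditions (the only terminal points it might lose or gain come from $\T^2_{\gamma_t,\gamma'_{3-t}}$, which is ruled out at $R_2$ by $d_{\T^2_A,R_2}=0$).

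For the final claim, let $S \in C \setminus \{R_1\}$ with $d_{\T^2_A,S} \not\equiv 0 \pmod{3}$. By Lemma \ref{LemB}(i) combined with $d_{\T^2_A,R_2}=0$ and $S\ne R_1$, the node $S$ is identified as the unique difference node of $\T^2_{\gamma_1,\gamma_2\mid\gamma'_1,\gamma_2}$ distinct from $R_1$. The equality $d_{\T^2_{\gamma_t,\gamma'_{3-t}},S}=0$ then follows from Remark \ref{Rem2} and the definition of difference node: $S$ terminates tails only among the symmetric difference $\T^2_{\gamma_1,\gamma_2\mid\gamma'_1,\gamma_2}$, and neither of the two families involved equals $\T^2_{\gamma_t,\gamma'_{3-t}}$. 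The main obstacle I expect is verifying $S \in W$; my plan is to use Remark \ref{Rem2} to locate $S$ on the boundary of the maximal tail of $\T^2_{\gamma_1,\gamma_2\mid\gamma'_1,\gamma_2}$, then invoke Proposition \ref{Diff}(ii) (with $W$ playing the role of the minimal element of a nonempty $\T^3$-symmetric difference set attached to this boundary) to conclude that the minimum $W$ of $\T^3_{\gamma_t,\gamma'_{3-t}}$ must engulf $S$. Lemma \ref{free-perf} will likely intervene to confirm the inclusions of terminal points coming from the asymmetric position of $(\gamma_t,\gamma'_{3-t})$ among the three pairs of $\P_A$.
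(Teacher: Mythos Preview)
Your proposal has the logical order reversed, and this creates a genuine gap right at the start of the reduction $(\mathrm{i})\Rightarrow(\mathrm{ii})$. You write that you would ``first verify that $\mu(Y)$ is a $3$-tail of $C$ containing $C_{\gamma_t}\cup C_{\gamma'_{3-t}}$'' and then apply Lemma~\ref{maxT3}. But the containment $C_{\gamma'_{3-t}}\subseteq\mu(Y)$ is not given: since $R_2\in\term_{\mu(Y)}$, exactly one of $C_{\gamma_2},C_{\gamma'_2}$ lies in $\mu(Y)$, and nothing in the hypotheses tells you which one. In the paper this is the last thing proved, not the first: one shows it by contradiction using quasistability and Lemma~\ref{maxT3}, and that application of Lemma~\ref{maxT3} in turn requires knowing that $Z$ is $\T^2_{\gamma_1,\gamma_2}$-free (respectively $\T^2_{\gamma'_1,\gamma'_2}$-free), which only follows once you already know $S\in Z$. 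The same circularity undermines your case~(ii): quasistability constrains only $\T_{\gamma_1,\gamma_2}$ and $\T_{\gamma'_1,\gamma'_2}$, so you cannot rule out, say, $(\alpha,\beta)=(\gamma'_1,\gamma_2)$ for $t=1$ by a direct appeal to it. Your assertion ``$R_2\in\term_{\mu(Y)}$ forces $R_2\in\term_{W'}$'' is also unjustified: Lemma~\ref{maxT3} only gives that $W'$ shares \emph{some} terminal point with $\mu(Y)$, which has three of them.

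The missing engine is the iterative chain argument the paper uses to prove $S\in Z$ \emph{first}. Starting from $X_0:=W'\in\T^2_{\gamma_t,\gamma'_{3-t}}$ (the $2$-tail with $R_1\in\term_{W'}$), one shows $Z\cup X_0$ is a $2$-tail, then uses Lemma~\ref{maxT2} to produce $X_1\in\T^2_{\gamma'_t,\gamma_{3-t}}$ with $X_0\subsetneq X_1\subseteq Z\cup X_0$, and iterates, alternating between the two families, until the new terminal point $T_k$ coincides with the difference node $S$; at that step $T_k\in Z$ by construction. Your proposed route via Proposition~\ref{Diff}(ii) and Remark~\ref{Rem2} does not supply this: those results describe the structure of $\T^s_{i,k|j,k}$ abstractly but give no mechanism for locating $S$ inside an arbitrary $Z$ with $R_2\in\term_Z$. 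Once $S\in Z$ and $d_{\T^2_{\gamma_t,\gamma'_{3-t}},S}=0$ are in hand, the identification $C_{\gamma'_{3-t}}\subseteq Z$ and the application of Lemma~\ref{maxT3} go through as you outlined, yielding the minimal $W\in\T^3_{\gamma_t,\gamma'_{3-t}}$ inside every such $Z$.
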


 \begin{proof}
 Let $W'\in\T^2_{\gamma_t,\gamma'_{3-t}}$ such that $R_1\in\term_{W'}$.
  Since $d_{\T^2_A,R_1}\ne0$ and $d_{\T^2_A,R_2}=0$ and $A$ is quasistable, by Proposition \ref{Diff}(i) we have 
  \begin{equation}\label{equalityT2}
  \T^2_{\gamma_1,\gamma_2}=\T^2_{\gamma_1,\gamma'_2} \ne
  \T^2_{\gamma'_1,\gamma_2}=\T^2_{\gamma'_1,\gamma'_2}.
  \end{equation}  
  There is exactly a node $S$ of $C\setminus\{R_1\}$ such that $d_{\T^2_A,S}\not\equiv0(3)$; such a node is the difference node of $\T^2_{\gamma_1,\gamma'_2|\gamma'_1,\gamma_2}$ distinct form $R_1$.  
In what follows, we denote by $Z$ either a tail $\mu(Y)$, for $Y$ as in (i), or a tail of $\R$.

We claim that $S$ is contained in $Z$ and $d_{\T^2_{\gamma_t,\gamma'_{3-t}},S}=0$. Set $X_0:=W'$. Write 
\[
\term_{X_0}=\{T_0,T_1\}, \text{ where } T_0:=R_1, \text{ and } \; \term_Z=\{R_2,U,V\}.
\] 
 Notice that $Z\wedge X_0$ admits $R_2$ as terminal point,  hence $Z\wedge X_0$
is not a 2-tail, by \eqref{equalityT2}. Moreover, $Z$ is not $X_0$-terminal, otherwise $\term_{X_0}\subseteq Z$, hence $X_0\subseteq Z$ (see  Lemma \ref{free-perf}), a contradiction. Thus, by \cite[Lemma 3.3(iii)]{P1} we see that $Z\cup X_0$ is a 2-tail crossing both $T_0$ and $R_2$. We may assume that $\term_{Z\cup X_0}=\{T_1,U\}$, and that $X_0$ crosses $V$. By Lemma \ref{maxT2}, there is a $(Z\cup X_0)$-terminal tail $X_1$ in $\T^2_{\gamma'_t,\gamma_{3-t}}$ such that $X_1\subseteq Z\cup X_0$. We may assume that $T_1\in \term_{X_1}$, otherwise $U\in\term_{X_1}$ and Proposition \ref{Diff}(i) would imply that   
\[
X_1\in \T^2_{\gamma_1,\gamma_2}\cap \T^2_{\gamma_1,\gamma'_2}\cap \T^2_{\gamma'_1,\gamma_2},
\]
 hence $Z\not\in\T^3_A$. Also, by \eqref{ne3}, we would have $Z\ne\mu(Y)$, for every $Y\in\wh\T^3_A$, a contradiction in any case. 
Write $\term_{X_1}=\{T_1,T_2\}$. Of course, $T_2$ is not in $X_0$, because $X_0\subsetneq X_1$, and hence $T_2$ is in $Z$, because $X_1\subset Z\cup X_0$. If $S$ is equal to $T_2$, then we are done. If not, then $Z$ crosses $T_2$ (otherwise, $d_{\T^2_A,T_2}\not\equiv0(3)$ and $T_2=S$) and there is $X_2\in \T^2_{\gamma_t,\gamma'_{3-t}}$ such that $T_2\in\term_{X_2}$ and 
$X_1\subsetneq X_2$. Write $\term_{X_2}=\{T_2,T_3\}$. 
Arguing as before, $Z\cup X_2$ is a 2-tail crossing both $T_2$ and $R_2$, with $T_3$ and $U$ as terminal points. The claim follows just by iterating the reasoning.

If $t=1$, then $C_{\gamma'_2}\subseteq Z$, otherwise we would have
\[
C_{\gamma_2}\subseteq Z\; \text{ and } \; W'\in\T^2_{\gamma_1,\gamma'_2}=\T^2_{\gamma_1,\gamma_2};
\] 
by the claim and by Lemma \ref{maxT3}, $Z$ would contain a tail of $\T^3_{\gamma_1,\gamma_2}$ with $R_2$ as terminal point, contradicting that $A$ is quasistable. Similarly, if $t=2$, then $C_{\gamma_2}\subseteq Z$, otherwise we would get the same contradiction by noticing that the properties
\[
C_{\gamma'_2}\subseteq Z\; \text{ and } \; W'\in\T^2_{\gamma'_1,\gamma_2}=\T^2_{\gamma'_1,\gamma'_2}
\] 
imply that $Z$ contains a tail of $\T^3_{\gamma'_1,\gamma'_2}$ with $R_2$ as terminal point.

Thus, using the claim and Lemma \ref{maxT3}, we see that $Z$ contains a tail $W$ in $\T^3_{\gamma_t,\gamma'_{3-t}}$. Notice that $W$ admits $R_2$ as terminal point, hence $W$ is the minimal tail of $\T^3_{\gamma_t,\gamma'_{3-t}}$. Since $d_{\T^2_{\gamma_t,\gamma'_{3-t}},R_1}\ne0$, the tail $W$ crosses $R_1$, hence $W\in\R$. Since any $Z$ contains $W$, we see that $W$ is the minimal tail of $\R$ and $W\subseteq \mu(Y)$, for every $Y$ as in (i).
\end{proof}

\begin{Prop}\label{PropA}
Fix $s$ in $\{2,3\}$. Assume that $A$ is quasistable and, if $s=3$, also 2-synchronized. Then there is a permutation $\tau_s$ of $\{0,1,2\}$ such that
\[
W^s_{\zeta_s+\tau_s(0)}\subseteq W^s_{\zeta_s+\tau_s(1)}\subseteq W^s_{\zeta_s+\tau_s(2)}
\]
and satisfying the following properties
\begin{itemize}
\item[(i)]
if $s=2$, or $s=3$ with $d_{\T^2_A,R_1}=d_{\T^2_A,R_2}=0$, then $\tau_s(1)=0$;
\item[(ii)]
if $d_{\T^2_A,R_1}\ne0$ and $d_{\T^3_A,R_1}\ne0$, then $\zeta_3=\tau_3(0)=0$.\end{itemize}
\end{Prop}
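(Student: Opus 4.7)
The plan is to deduce existence of the permutation $\tau_s$ from pairwise comparability of the three tails $W^s_{\zeta_s},W^s_{\zeta_s+1},W^s_{\zeta_s+2}$ and then to pin down their ordering through case analysis driven by quasistability of $A$ and Proposition~\ref{Diff}. Pairs drawn from $\T^s_{\alpha,\beta}$ and $\T^s_{\alpha',\beta'}$ with $(\alpha,\beta),(\alpha',\beta')\in\P_A$ sharing an index are comparable by Remark~\ref{Rem1}; when the convention $W^s_{\zeta_s+t_0}:=C$ applies the comparability is trivial; and comparability of $W^s_{\zeta_s+1}$ with $W^s_{\zeta_s+2}$ will follow once the position of $W^s_{\zeta_s}$ in the chain has been fixed via (i) or (ii).

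For part~(i), I would argue by contradiction. Suppose $W^s_{\zeta_s}$ is strictly smallest: then $W^s_{\zeta_s}$ is the minimal element of the symmetric differences $\T^s_{\gamma_1,\gamma_2|\gamma_1,\gamma'_2}$ and $\T^s_{\gamma_1,\gamma_2|\gamma'_1,\gamma_2}$ starting from index $\zeta_s$. Case~(ii) of Proposition~\ref{Diff} is ruled out: it is unavailable for $s=2$, and for $s=3$ the hypothesis $d_{\T^2_A,R_1}=d_{\T^2_A,R_2}=0$ combined with Proposition~\ref{Diff}(i) forces $\T^2_{\gamma_1,\gamma_2}=\T^2_{\gamma_1,\gamma'_2}=\T^2_{\gamma'_1,\gamma_2}$, leaving the relevant $\T^2$ symmetric differences empty and hence without a maximal tail. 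Case~(i) of Proposition~\ref{Diff} then forces $R_1,R_2\in\term_{W^s_{\zeta_s}}$, violating the first quasistability condition on $\T^2_{\gamma_1,\gamma_2}\cup\T^3_{\gamma_1,\gamma_2}$. If instead $W^s_{\zeta_s}$ is strictly largest, the analogous argument produces $R_2\in\term_{W^s_{\zeta_s+1}}$ and $R_1\in\term_{W^s_{\zeta_s+2}}$; since $W^s_{\zeta_s+1},W^s_{\zeta_s+2}\subsetneq W^s_{\zeta_s}$, the inclusions give $C_{\gamma'_1},C_{\gamma'_2}\subseteq W^s_{\zeta_s}$, and a short case split according to whether $W^s_{\zeta_s+1}$ and $W^s_{\zeta_s+2}$ further lie in $\T^s_{\gamma'_1,\gamma'_2}$ eliminates every possibility: when both do, the second quasistability condition is violated; the remaining sub-cases collide either with the terminal sets or with component memberships forced by the containments above. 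Either way $W^s_{\zeta_s}$ must be the middle, giving $\tau_s(1)=0$.

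For part~(ii), Lemma~\ref{LemB}(ii) together with~\eqref{alphabeta} supplies a tail $W'\in\T^3_{\gamma_1,\gamma_2}$ with $R_1\in\term_{W'}$. Since successive tails in $\T^3_{\gamma_1,\gamma_2}$ satisfy $W^3_{3k}\prec W^3_{3(k+1)}$ with pairwise disjoint terminals, $W'$ must coincide with $W^3_0$: if $W'=W^3_{3k}$ for some $k\ge 1$, then either $C_{\gamma'_1}\subseteq W^3_0\subseteq W'$ against $R_1\in\term_{W'}$, or $R_1\in\term_{W^3_0}$ contradicts disjointness with $\term_{W'}$. Hence $R_1\in\term_{W^3_0}$ and $C_{\gamma'_1}\not\subseteq W^3_0$, while $W^3_2\in\T^3_{\gamma'_1,\gamma_2}$ contains $C_{\gamma'_1}$, so $W^3_0\ne W^3_2$ and therefore $\zeta_3=0$. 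Applying~\eqref{alphabeta} once more rules out $R_1$ as a terminal of any tail in $\T^3_{\gamma_1,\gamma'_2}$, so $C_{\gamma'_1}\subseteq W^3_1$; combining with pairwise comparability and $C_{\gamma'_1}\not\subseteq W^3_0$ yields $W^3_0\subsetneq W^3_1$ and $W^3_0\subsetneq W^3_2$, giving $\tau_3(0)=0$. The hardest step will be the strictly largest case of part~(i), where several configurations are individually compatible with quasistability and one must invoke simultaneously the second quasistability condition at the opposite corner $(\gamma'_1,\gamma'_2)$ and careful tracking of component containments to eliminate each.
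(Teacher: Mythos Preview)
Your plan tracks the paper's opening move (Remark~\ref{Rem1} for comparability of $W^s_{\zeta_s}$ with each of $W^s_{\zeta_s+1},W^s_{\zeta_s+2}$) and your treatment of the ``strictly smallest'' branch of part~(i) matches the paper's. However, there are two genuine gaps.

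\textbf{Missing cases for the existence of the chain.} The proposition asserts the chain $W^s_{\zeta_s+\tau_s(0)}\subseteq W^s_{\zeta_s+\tau_s(1)}\subseteq W^s_{\zeta_s+\tau_s(2)}$ exists in \emph{all} cases; items~(i) and~(ii) are extra constraints that kick in only under their respective hypotheses. You treat only the situations covered by~(i) and~(ii), leaving out $s=3$ with $d_{\T^2_A,R_1}\ne 0$ and $d_{\T^3_A,R_1}=0$ (and the symmetric variants). In the paper these are handled separately: when also $d_{\T^3_A,R_2}=0$, Lemma~\ref{LemB}(i) forces two of the three nested sets $\T^3_{\alpha,\beta}$ to agree away from $\{R_1,R_2\}$, so two of the three tails coincide; when $d_{\T^3_A,R_2}\ne 0$ (and $d_{\T^2_A,R_2}=0$), the argument goes through Lemma~\ref{LemD}, which produces the minimal tail of $\T^3_{\gamma_t,\gamma'_{3-t}}$ and pins down the relative positions case by case. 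Your proposal does not mention Lemma~\ref{LemD} at all, and your claim that ``comparability of $W^s_{\zeta_s+1}$ with $W^s_{\zeta_s+2}$ will follow once the position of $W^s_{\zeta_s}$ in the chain has been fixed via~(i) or~(ii)'' is circular here, since neither~(i) nor~(ii) applies.

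\textbf{Comparability of $W^3_1$ and $W^3_2$ in part~(ii).} You correctly deduce $W^3_0\subsetneq W^3_1$ and $W^3_0\subsetneq W^3_2$, giving $\tau_3(0)=0$. But this does not yield a total order: $W^3_1\in\T^3_{\gamma_1,\gamma'_2}$ and $W^3_2\in\T^3_{\gamma'_1,\gamma_2}$ share no index, so Remark~\ref{Rem1} is unavailable, and ``$W^3_0$ smallest'' does not force $W^3_1$ and $W^3_2$ to be comparable. The paper closes this by invoking Lemma~\ref{LemB}(ii) to get $d_{\T^2_A,S}\equiv 0\ (3)$ for all nodes $S\notin\{R_1,R_2\}$, and then showing that both $W^3_1$ and $W^3_2$ cross $R_1$ and $R_2$, which forces $W^3_1=W^3_2$. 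You need an argument of this kind.

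A smaller point: in the ``strictly largest'' branch of~(i), the paper does not case-split on membership in $\T^s_{\gamma'_1,\gamma'_2}$; instead it shows each $W^s_t$ crosses $R_t$ (ruling out the alternatives via perfectness and Lemma~\ref{maxT2}), and then applies Lemmas~\ref{maxT2}/\ref{maxT3} to produce tails in $\T^s_{\gamma'_1,\gamma'_2}$ (not necessarily equal to $W^s_1,W^s_2$) with $R_1,R_2$ as terminal points. Your sketch may also work, but as written it is not detailed enough to check.
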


\begin{proof}
By Remark \ref{Rem1}, for every $t\in\{1,2\}$ one of the following inclusions holds 
\begin{equation}\label{containing}
\begin{array}{lll}
 W^s_{\zeta_s+t}\subseteq W^s_{\zeta_s} \; \text{ or } \; W^s_{\zeta_s}\subseteq W^s_{\zeta_s+t}. 
\end{array}
\end{equation}

Suppose that $s=2$, or $s=3$ with $d_{\T^2_A,R_1}=d_{\T^2_A,R_2}=0$. 
If $s=3$, using that $A$ is quasistable, we have 
\begin{equation}\label{nodiff}
\T^2_{\gamma_1,\gamma_2}=\T^2_{\gamma_1,\gamma'_2}=\T^2_{\gamma'_1\gamma_2}=\T^2_{\gamma'_1,\gamma'_2}.
\end{equation}
In particular, $\zeta_s=0$.
If $W^s_t\subseteq W^s_0\subseteq W^s_{3-t}$, for some $t\in\{1,2\}$, of course we are done. Thus, by \eqref{containing}, we may assume that one of the following properties hold 
\begin{equation}\label{eitheror}
W^s_0\subseteq W^s_1\wedge W^s_2\; \text{ or } \; W^s_1\cup W^s_2\subseteq W^s_0.
\end{equation}
 If $W^s_0=W^s_t$, for some $t\in\{1,2\}$, again we are done, hence we may also assume that $W^s_0\ne W^s_t$, for every $t\in\{1,2\}$. 
 
 If the left hand side of \eqref{eitheror} holds, by Lemma \ref{Diff}(i) we have that $W^s_0$ admits $\{R_1,R_2\}$ as terminal points, contradicting the fact that $A$ is quasistable. 
 If the right hand side of \eqref{eitheror} holds, again by Lemma \ref{Diff}(i) we have  
 \[
 R_1\in\term_{W^s_2}\; \text{ and } \; R_2\in\term_{W^s_1}. 
 \] 
 For every $t\in\{1,2\}$, the tail $W^s_t$ crosses $R_t$, otherwise either $(W^s_1,W^s_2)$ is perfect, implying that  $W^s_1\subseteq (W^s_2)^c$ and hence $s=3$, or $s=3$ and $W^s_1\wedge W^s_2$ is nonemtpy. In both cases, $s=3$ and $W^s_1\cup W^s_2$ is a 2-tail, hence, by Lemma \ref{maxT2} we would have a $(W^s_1\cup W^s_2)$-terminal tail in $\T^2_A$, and using \eqref{nodiff} we would get a contradiction. Since $W^s_t$ crosses $R_t$ and contains $R_{3-t}$ as terminal point for every $t\in\{1,2\}$, it follows from \eqref{nodiff} and from Lemmas \ref{maxT2} and \ref{maxT3} that there are tails in $\T^s_{\gamma'_1,\gamma'_2}$ containing $R_1$ and $R_2$ as terminal points, which is not possible because $A$ is quasistable.

Suppose now that $s=3$ and $d_{\T^2_A,R_1}\ne0$, with $R_1\in \term_W$, for $W\in\T^2_A$. By Lemma \ref{LemB}(i) there is at most one node $S$ of $C\setminus\{R_1,R_2\}$ such that $d_{\T^2_A,S}\not\equiv0(3)$. In particular, there are distinct pairs $(\alpha_1,\beta_1),(\alpha_2,\beta_2)\in\P_A$ such that $d_{\T^2_{\alpha_1,\beta_1},S}=d_{\T^2_{\alpha_2,\beta_2},S}$, for every node $S\in C\setminus\{R_1,R_2\}$. If $d_{\T^3_A,R_1}=d_{\T^3_A,R_2}=0$, we see that 
\[
W^s_{\zeta_s+t_1}=W^s_{\zeta_s+t_2}, \text{ for distinct } t_1,t_2\in\{0,1,2\},
\]
and combining this equality with \eqref{containing} we are done. If $d_{\T^3_A,R_1}\ne0$, then $\zeta_s=0$. By Lemma \ref{LemB}(ii) we have $d_{\T^2_A,S}\equiv 0(3)$, for every node $S\in C\setminus\{R_1,R_2\}$. Let $W'\in\T^3_A$ be such that $R_1\in\term_{W'}$. By \eqref{alphabeta} we have 
 \[
 \begin{array}{lll}
 W\in\T^2_{\gamma_1,\gamma'_2} &  \text{ and } &  \term_{W}=\{R_1,R_2\}; \\ 
 W'=W^s_0\in\T^3_{\gamma_1,\gamma_2} &  \text{ and} &   \term_{W'}\cap \{R_1,R_2\}=\{R_1\}.
 \end{array}
 \] 
 Thus, $W^s_1$ crosses $R_1$ and $R_2$; also, using Lemma \ref{Diff}(i), we see that
 $W^s_0\subseteq W^s_2$ and $R_1\not\in\term_{W^s_2}$, hence $W^s_2$ crosses $R_1$ and $R_2$ as well. We deduce that $W^s_1=W^s_2$ and, using again  \eqref{containing}, we are done. From now on, we may assume $s=3$ and
 \[
 d_{\T^2_A,R_1}\ne0, \; d_{\T^3_A,R_2}\ne 0\; \text{ and } \; d_{\T^3_A,R_1}=d_{\T^2_A,R_2}=0,
 \] 
and hence that $\T^2_{\gamma_1,\gamma_2}=\T^2_{\gamma_1,\gamma'_2}$.
 
 We now use Equation \eqref{containing}, Remark \ref{Rem2} and Lemma \ref{LemD}.  Suppose $d_{\T^2_{\gamma_1,\gamma'_2},R_1}\ne0$. We have $R_2\in\term_{W^s_1}$,  hence $W^s_1\subseteq W^s_0$, and also $d_{\T^2_{\gamma_1,\gamma_2},S}=d_{\T^2_{\gamma_1,\gamma'_2},S}=0$, for every node $S\in C\setminus\{R_1\}$ such that $d_{\T^2_A,S}\not\equiv0(3)$. Thus, if $W^s_2$ crosses $R_2$, then $W^s_0\subseteq W^s_2$. If $R_2\in\term_{W^s_2}$, then $W^s_1\subseteq W^s_2$ (recall that $W^s_1$ is the minimal tail of the set $\R$ in Lemma \ref{LemD}).
 Suppose $d_{\T^2_{\gamma'_1,\gamma_2},R_1}\ne0$. We have $R_2\in\term_{W^s_2}$. 
  If $R_2\in \term_{W^s_t}$, for some 
 $t\in\{0,1\}$, then we have $W^s_2\subseteq W^s_t\subseteq W^s_{1-t}$. If $W^s_0$ and $W^s_1$ cross $R_2$, then, using that $\T^2_{\gamma_1,\gamma_2}=\T^2_{\gamma_1,\gamma'_2}$, we have $W^s_0=W^s_1$. 
\end{proof}

\begin{Lem}\label{LemC}
Fix $s\in\{2,3\}$. 
Assume that $A$ is quasistable and, if $s=3$, also 2-synchronized.
If $Y$ is a tail in $\wh\T^s_A$, then $\mu(Y)$ contains a tail of $\T^s_A$.
\end{Lem}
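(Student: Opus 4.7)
The plan is to reduce the problem to Lemmas \ref{maxT2} and \ref{maxT3} by proving that $\mu(Y)$ is a tail of $C$ containing $C_\alpha\cup C_\beta$ for some $(\alpha,\beta)\in\P_A$. First I would verify that $\mu(Y)$ is itself a tail of $C$: both $\mu(Y)$ and its complementary subcurve are connected, as continuous images of the connected subcurves $Y$ and $Y^c$ under $\mu$ (using that nodes lying in the intersection of two $\mu(Y)^c$-components stay in $\mu(Y)^c$), and a count of transitions along each exceptional chain $E_R$ gives $k_{\mu(Y)}=k_Y=s$. Moreover, $\sigma(0)\in\mu(Y)^c$ since $\sigma(0)\in Y^c$.

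Next I would establish the dichotomy that for each $t\in\{1,2\}$, either $\wh C_{\gamma_t}\subseteq Y$ or $\wh C_{\gamma'_t}\subseteq Y$. The exceptional component $E_{R_t,\gamma_t}\subseteq Y$ meets only $\wh C_{\gamma_t}$ and $E_{R_t,\gamma'_t}$ in $C(2)$; if $\wh C_{\gamma_t}\not\subseteq Y$, then connectivity of $Y$ together with the bound $k_Y\le 3$ propagates along the chain $E_{R_t}$ to force $E_{R_t,\gamma'_t}\subseteq Y$ and then $\wh C_{\gamma'_t}\subseteq Y$. Thus $C_\alpha\cup C_\beta\subseteq\mu(Y)$ for some $\alpha\in\{\gamma_1,\gamma'_1\}$ and $\beta\in\{\gamma_2,\gamma'_2\}$.

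When $(\alpha,\beta)\in\P_A$, I would apply Lemma \ref{maxT2} (if $s=2$) or Lemma \ref{maxT3} (if $s=3$) to the $s$-tail $\mu(Y)$ with the pair $(\alpha,\beta)$, obtaining a tail $W\in\T^s_{\alpha,\beta}\subseteq\T^s_A$ with $W\subseteq\mu(Y)$. In the $s=3$ subcase where Lemma \ref{maxT3} only yields a $\mu(Y)$-terminal tail $W'\in\T^2_{\alpha,\beta}$ rather than one in $\T^3_{\alpha,\beta}$, I would use that $A$ is 2-synchronized (so $W'$ lifts uniquely to $\wh W'\in\wh\T^2_A$ with $\mu(\wh W')=W'$) together with the ordering of $\T^s_A$ described in Proposition \ref{PropA} to exhibit a genuine $\T^3_A$-tail sitting inside $\mu(Y)$ and sharing a terminal with it.

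The main obstacle is the ``bad'' subcase $(\alpha,\beta)=(\gamma'_1,\gamma'_2)$, in which $C_{\gamma_1}\cup C_{\gamma_2}\subseteq\mu(Y)^c$, both $R_1$ and $R_2$ belong to $\term_{\mu(Y)}$, and no pair of $\P_A$ fits inside $\mu(Y)$. Applying Lemma \ref{maxT2}/\ref{maxT3} with $(\gamma'_1,\gamma'_2)$ produces a $\mu(Y)$-terminal $W\in\T^2_{\gamma'_1,\gamma'_2}\cup\T^3_{\gamma'_1,\gamma'_2}$ with $\term_W\cap\{R_1,R_2\}\ne\emptyset$; the subcase $\{R_1,R_2\}\subseteq\term_W$ directly violates the second quasistability condition. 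To exclude the remaining possibility I would exploit the $C(2)$-analog of Lemma \ref{2tails} (uniqueness of the minimal $s$-tail in $\wh\T^s_A$): if the bad $Y$ were in $\wh\T^s_A$, then quasistability applied symmetrically on the $(\gamma_1,\gamma_2)$ side produces a competing $s$-tail $Y'$ of $C(2)$ with $\wh C_{\gamma_1}\cup\wh C_{\gamma_2}\subseteq Y'$ whose wedge $Y\wedge Y'$ would be a strictly smaller valid $s$-tail containing $E_{R_1,\gamma_1}\cup E_{R_2,\gamma_2}$, contradicting the minimality defining $Y$'s position in the nested chain $\wh\T^s_A$. Carrying out this minimality argument cleanly --- and in particular checking it for $s=3$ with the $\wh\T^2_A$-freeness constraint that enters Lemma \ref{3tails} --- is the delicate combinatorial crux.
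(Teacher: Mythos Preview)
Your outline is on the right track for $s=2$ and for the easy subcases of $s=3$, but steps 4 and 5 are genuine gaps, not mere details.

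\textbf{Step 5 (the ``bad'' case).} Your minimality argument does not work as stated. Quasistability is a \emph{negative} constraint (at most one of $R_1,R_2$ is terminal for tails in $\T^2_{\gamma'_1,\gamma'_2}\cup\T^3_{\gamma'_1,\gamma'_2}$); it does not construct a ``competing'' $Y'$. Even if such a $Y'$ existed with $\wh C_{\gamma_1}\cup\wh C_{\gamma_2}\subseteq Y'$, there is no reason $E_{R_t,\gamma_t}\subseteq Y'$, so $Y\wedge Y'$ need not contain $E_{R_1,\gamma_1}\cup E_{R_2,\gamma_2}$ and the minimality contradiction collapses. The paper instead disposes of this case directly: for $s=2$ (and for the subcase $d_{\T^2_A,R_1}=d_{\T^2_A,R_2}=0$ when $s=3$) it argues that the bad configuration forces both $R_1$ and $R_2$ to be terminal for tails of $\T^2_{\gamma'_1,\gamma'_2}\cup\T^3_{\gamma'_1,\gamma'_2}$, violating the second quasistability inequality. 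For the remaining $s=3$ subcases the paper does \emph{not} argue by exclusion of the bad case at all; rather, it proves positively that $\mu(Y)$ crosses $R_2$ using property \eqref{RtTerm}.

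\textbf{Step 4 (switching from a $\T^2$-tail to a $\T^3$-tail).} Invoking Proposition~\ref{PropA} here is a red herring: that proposition concerns the tails $W^s_{\zeta_s+t}$ indexed by $t\in\{0,1,2\}$, not an arbitrary $\mu(Y)$-terminal 2-tail produced by Lemma~\ref{maxT3}. The paper's route is entirely different. It organizes the $s=3$ argument by cases on whether $\mu(Y)$ crosses $R_1$ and $R_2$ and on the values $d_{\T^2_A,R_t}$, and then shows in each case that $\mu(Y)$ is $\T^2_{\alpha,\beta}$-free for a suitable $(\alpha,\beta)\in\P_A$, so that Lemma~\ref{maxT3} yields a $\T^3$-tail directly. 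The tools are \eqref{dtA}, \eqref{ne3}, \eqref{RtTerm}, Lemma~\ref{LemB}, and, crucially, Lemma~\ref{LemD}; the last handles the cases where $R_2\in\term_{\mu(Y)}$, $R_1\notin\term_{\mu(Y)}$, and $d_{\T^2_A,R_1}\ne 0$, by producing the minimal tail of $\R\subseteq\T^3_A$ inside $\mu(Y)$. None of these results appear in your outline, and without them there is no mechanism to guarantee $\T^2_{\alpha,\beta}$-freeness of $\mu(Y)$.
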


\begin{proof}
If $s=2$, then $\mu(Y)$ is a 2-tails and, using that $A$ is quasistable, we have $C_{\alpha}\cup C_\beta\subseteq \mu(Y)$, for some $(\alpha,\beta)\in\P_A$. By Lemma \ref{maxT2}, $\mu(Y)$ contains a tail in $\T^2_A$.

Thus, we may assume $s=3$. 
If $\mu(Y)$ crosses $R_1$ and $R_2$, then we are done, because $\mu(Y)$ is $\T^2_{\alpha,\beta}$-free for some $(\alpha,\beta)$ in $\Pa_A$  (just combine \eqref{ne3} and Lemma \ref{LemB}(i)). If $d_{\T^2_A,R_1}=d_{\T^2_A,R_2}=0$, then the fact that  $A$ is quasistable implies that
     \[
     \T^2_{\gamma_1,\gamma_2}=\T^2_{\gamma_1,\gamma'_2}=\T^2_{\gamma'_1,\gamma_2}=\T^2_{\gamma'_1,\gamma'_2},
     \]
hence $\mu(Y)$ is $(\T^2_A\cup \T^2_{\gamma'_1,\gamma'_2})$-free (see \eqref{dtA} and \eqref{ne3}). Using that $A$ is quasistable, $\mu(Y)$ contains $C_\alpha\cup C_\beta$, for some $(\alpha,\beta)\in\P_A$, then $\mu(Y)$ contains a tail of $\T^3_A$. 

Suppose that $R_1\in \term_W\cap \term_{\mu(Y)}$, for some $W\in\T^2_A$. Since $A$ is 2-synchronized, $W$ is the image via $\mu$ of a tail in $\wh\T^2_A$. Using \eqref{RtTerm} and the definition of $\wh\T^s_A$, for every $Y'\in\wh\T^2_A$ such that $R_1\in\term_{\mu(Y')}$, we have
\[
C_{\gamma_1}\subseteq \mu(Y)\wedge \mu(Y') \text{ and } E_{R_1,\gamma_1}\subseteq Y\wedge Y' ,
\]
and hence $d_{\T^2_A,R_1}\leq1$ and $d_{\T^2_A,R_1}=1$ (see also \eqref{dtA}). 
Since $d_{\T^2_A,R_1}=1$  and $C_{\gamma_1}\subseteq W$, using that $A$ is quasistable we have
$d_{\T^2_{\gamma_1,\gamma_2},R_1}=d_{\T^2_{\gamma'_1,\gamma_2},R_1}=0$, and
\[ 
W=W^2_1\in\T^2_{\gamma_1,\gamma'_2},\; \text{ with } \; \term_{W^2_1}=\{R_1,R_2\}.
\]
  By \eqref{diff-rel}, we get $d_{\T^2_{\gamma_1,\gamma_2},R_2}=0$ and we see that 
  the minimal tail of $\T^2_{\gamma_1,\gamma_2}$ is $W^2_1$-free.
  By Proposition \ref{Diff}, we deduce that
 \[
 \T^2_{\gamma_1,\gamma_2}=\T^2_{\gamma'_1,\gamma_2}=\T^2_{\gamma_1,\gamma'_2}\setminus\{W^2_1\}.
 \]
As a consequence of \eqref{ne3}, we have that $\mu(Y)$ is $\T^2_{\gamma_1,\gamma_2}$-free. 
 Using again \eqref{RtTerm}, we obtain that $\mu(Y)$ crosses $R_2$, then $C_{\gamma_2}\subseteq \mu(Y)$.  By Lemma \ref{maxT3}, $\mu(Y)$ contains a tail in $\T^3_{\gamma_1,\gamma_2}$, and we are done.
 
In the remaining cases, we are done by Lemma \ref{LemD}. 
 \end{proof}

    \begin{Lem}\label{LemA}
    Assume that $A$ is quasistable and 2-synchronized. If $\zeta_3\ge3$, then $\#\wh\T^3_A\ge\zeta_3$ and, for every $t$ in $\{3,\dots,\zeta_s\}$ with $t\equiv 0 (3)$, we have
    \[
    \mu(Y^3_{t-3})=\mu(Y^3_{t-2})=\mu(Y^3_{t-1})=W^3_{t-3}=W^3_{t-2}=W^3_{t-1}.
    \]
    \end{Lem}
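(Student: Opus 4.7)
The plan is to argue by induction on $t\in\{3,6,\ldots,\zeta_3\}$, stepping by 3. For each such $t$, set $W:=W^3_{t-3}=W^3_{t-2}=W^3_{t-1}$. By the definition of $\zeta_3$, $W$ lies in $\T^3_{\gamma_1,\gamma_2}\cap\T^3_{\gamma_1,\gamma'_2}\cap\T^3_{\gamma'_1,\gamma_2}$, hence contains $C_{\gamma_1}\cup C_{\gamma'_1}\cup C_{\gamma_2}\cup C_{\gamma'_2}$, is $\T^2_A$-free, and satisfies $\sigma(0)\in W^c$. In particular $R_1,R_2$ are interior nodes of $W$, so the three terminal nodes $T_1,T_2,T_3$ of $W$ are distinct from $R_1,R_2$, and the full exceptional chains $E_{R_1},E_{R_2}$ sit in the interior of any $W$-lifting to $X_A=C(2)$.

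For the base case $t=3$, I would construct three $\prec$-nested $W$-liftings $M_0\prec M_1\prec M_2$ in $X_A$ that are 3-tails. At each terminal $T_i\in C_{\alpha_i}\cap C_{\beta_i}$ of $W$ (with $C_{\alpha_i}\subseteq W$), the chain $E_{T_i}=E_{T_i,\alpha_i}\cup E_{T_i,\beta_i}$ provides three successive ``stopping points'' for a tail lifting $W$: the nodes $\wh C_{\alpha_i}\cap E_{T_i,\alpha_i}$, $E_{T_i,\alpha_i}\cap E_{T_i,\beta_i}$, and $E_{T_i,\beta_i}\cap\wh C_{\beta_i}$. Let $M_j$ (for $j\in\{0,1,2\}$) be the subcurve whose terminal near each $T_i$ is the $j$-th such node; concretely, $M_j$ is the union of the strict transforms $\wh C_\delta$ with $C_\delta\subseteq W$, the full interior exceptional chains of $W$, and the first $j$ components of each $E_{T_i}$. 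Each $M_j$ is a connected 3-tail of $X_A$ with connected complement and $\mu(M_j)=W$, contains $E_{R_1,\gamma_1}$ and $E_{R_2,\gamma_2}$, has $\sigma(0)\notin M_j$, and, using that $A$ is 2-synchronized and $W$ is $\T^2_A$-free, is $\wh\T^2_A$-free. Hence $M_0,M_1,M_2\in\wh\T^3_A$.

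I identify them with $Y^3_0,Y^3_1,Y^3_2$ via Lemma \ref{LemC}: any $Y\in\wh\T^3_A$ has $\mu(Y)$ containing some $W^3_k\in\T^3_A$, so minimality of $W$ in $\T^3_A$ together with minimality of $Y^3_0$ forces $Y^3_0=M_0$. For $Y^3_1$, the condition $Y^3_0\prec Y^3_1$ forbids $Y^3_1$ from sharing terminals with $M_0$ (situated at $\wh C_{\alpha_i}\cap E_{T_i,\alpha_i}$); combined with the minimality defining $Y^3_1$ and Lemma \ref{LemC} ruling out strictly larger elements of $\T^3_A$ as candidate $\mu$-images, I obtain $Y^3_1=M_1$. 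An analogous argument applied to the terminals of $M_1$ yields $Y^3_2=M_2$.

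For the inductive step from $t'<t$ (both multiples of 3) to $t$, assume $Y^3_{t'-1}=M_2^{(\text{prev})}$, the top lifting of $W^{(\text{prev})}:=W^3_{t'-3}$ from the previous stage. Its terminals lie at $E_{T_i,\beta_i}\cap\wh C_{\beta_i}$ for each terminal $T_i$ of $W^{(\text{prev})}$, so any $\prec$-successor must include $\wh C_{\beta_i}$ at each such $T_i$; by Lemma \ref{LemC} and the minimality defining $Y^3_{t-3}$, its $\mu$-image equals the current $W=W^3_{t-3}$, and $Y^3_{t-3}=M_0$ for this $W$. The base-case argument then recycles to give $Y^3_{t-2}=M_1$ and $Y^3_{t-1}=M_2$. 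The main technical obstacle is this transition: ruling out an ``intermediate'' 3-tail of $X_A$ strictly between $M_2^{(\text{prev})}$ and $M_0$ in the $\prec$-order whose $\mu$-image is a subcurve of $C$ not belonging to $\T^3_A$. Handling this will require Lemma \ref{LemC} to force any such intermediate $\mu$-image to contain a tail of $\T^3_A$, then minimality of $W$ in $\T^3_A$ subject to $W^{(\text{prev})}\prec W$ to conclude the intermediate image must equal $W$, contradicting intermediacy.
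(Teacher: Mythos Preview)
Your approach is essentially the paper's: build the canonical $W$-liftings, check they are $\wh\T^2_A$-free (via 2-synchronization and $W$ being $\T^2_A$-free), use them as upper bounds for the $Y^3_t$, and use Lemma~\ref{LemC} for the lower bound $W\subseteq\mu(Y^3_t)$. The paper packages the same idea slightly differently: it assembles the whole chain
\[
L_0^{W_0}\prec L_1^{W_1}\prec L_2^{W_2}\prec L_0^{W_3}\prec\cdots\prec L_2^{W_{\zeta-1}},
\]
reads off $\#\wh\T^3_A\ge\zeta$ and $Y_t\subseteq L_{b_t}^{W_t}$ (hence $\mu(Y_t)\subseteq W_t$) in one stroke, and then runs a minimal-counterexample argument together with Lemma~\ref{LemC} for the reverse inclusion.

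There is one genuine soft spot in your write-up. You repeatedly assert $Y^3_j=M_j$, not merely $\mu(Y^3_j)=W$, and then use the \emph{exact} location of the terminals of $Y^3_0$ (at $\wh C_{\alpha_i}\cap E_{T_i,\alpha_i}$) to constrain $Y^3_1$. But from minimality and Lemma~\ref{LemC} you only get $Y^3_0\subseteq M_0$ and $\mu(Y^3_0)=W$; the equality $Y^3_0=M_0$ is not yet justified, and in any case is stronger than the lemma requires. The fix is exactly what the paper does implicitly: from $Y^3_0\subseteq M_0$ and $M_0\prec M_1$ one checks $\term_{Y^3_0}\cap\term_{M_1}=\emptyset$ (any terminal of $Y^3_0$ either lies in $\term_{M_0}$, which is disjoint from $\term_{M_1}$, or is interior to $M_0\subsetneq M_1$), so $Y^3_0\prec M_1$ and hence $Y^3_1\subseteq M_1$; iterate. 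You never need to pin down $Y^3_j$ as a specific lifting. With this adjustment, your base case, your inductive transition, and your use of Lemma~\ref{LemC} to exclude intermediate $\mu$-images all go through, and coincide with the paper's argument.
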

    
   \begin{proof}
 Set $W_{\cdot}:=W^3_{\cdot}$, $Y_{\cdot}:=Y^3_{\cdot}$ and $\zeta:=\zeta_3$.
  By Equations \eqref{dtA} and \eqref{zeta-cond}, the canonical liftings of $W_t$ are $\wh \T^2_A$-free for every $t<\zeta$, and fit in a chain
   \[
   L_0^{W_0}\prec L_1^{W_1}\prec L_2^{W_2}\prec L_0^{W_3}\prec \cdots \prec L_1^{W_{\zeta-2}}\prec L_2^{W_{\zeta-1}}.
   \]
  We see that $\#\wh\T^3_A\ge \zeta$ and, for every $t\in\{0,\dots,\zeta-1\}$, there is $i\in\{0,1,2\}$ such that $Y_t\subseteq L_i^{W_t}$, hence $\mu(Y_t)\subseteq W_t$.  
       By contradiction, assume that $\mu(Y_u)\neq W_u$ for some $u<\zeta$, and let $u$ be the minimum for which this condition holds.  
       If $u>0$, we have  
       $W_{u-1}=\mu(Y_{u-1})\subseteq\mu(Y_u)\subseteq W_u$, then by \eqref{zeta-cond} we get $u\equiv 0(3)$ and hence
      \begin{equation}\label{equalities}
      W_{u-3}=W_{u-2}=W_{u-1}=\mu(Y_{u-3})=\mu(Y_{u-2})=\mu(Y_{u-1})\prec \mu(Y_u). 
      \end{equation}
  
  To get a contradiction, it suffices that $W_{u+j}\subseteq\mu(Y_u)$, for some $j\in\{0,1,2\}$, because $W_{u+j}=W_u$ for every $j\in\{0,1,2\}$. Indeed, if $\mu(Y_u)$ crosses $R_1$ and $R_2$, then it is $\T^2_{\alpha,\beta}$-free for every $(\alpha,\beta)\in\Pa_A$ (use \eqref{zeta-cond} and that $A$ is 2-synchronized), and hence we are done by \eqref{equalities}. On the other hand, if $R_1\in\term_{\mu(Y_u)}$, then $u=0$; we need only show that $\mu(Y)$ contains a tail of $\T^3_A$, which is true by Lemma \ref{LemC}.
       \end{proof}

\begin{Prop}\label{PropB}
 Fix $s$ in $\{2,3\}$ and $t$ in $\{0,1,2\}$. Assume $A$ quasistable and, if $s=3$, also 2-synchronized. Let $\tau_s$ be as in Proposition \ref{PropA}.  If  $\#\wh\T^s_A>\zeta_s+t$ and $\mu(Y^s_{\zeta_s+t'})=W^s_{\zeta_s+\tau_s(t')}$ for each $t'$ in $\{0,\dots,t-1\}$, then $W^s_{\zeta_s+\tau_s(t)}\subseteq \mu(Y^s_{\zeta_s+t})\ne C$.
\end{Prop}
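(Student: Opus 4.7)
The plan is to prove both conclusions by induction on $t$, combining Lemma~\ref{LemC} with Proposition~\ref{PropA}, and, when $\zeta_s>0$, with Lemma~\ref{LemA}. I work throughout with $Y:=Y^s_{\zeta_s+t}$.

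First I would verify that $\mu(Y)\neq C$. The hypothesis $\#\wh\T^s_A>\zeta_s+t$ forces $Y$ to be a proper $s$-tail of $X_A$, and the construction of $\wh\T^s_A$ places $\sigma(0)\in Y^c$. Since $\sigma(0)$ lies on the strict transform of the component of $C$ containing it, it is a non-$\mu$-exceptional point of $X_A$; therefore $\sigma(0)\in\mu(Y^c)\setminus\mu(Y)$ and $\mu(Y)\subsetneq C$. Next, Lemma~\ref{LemC} applied to $Y\in\wh\T^s_A$ produces a tail $W=W^s_m\in\T^s_A$ with $W\subseteq\mu(Y)$.

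The remaining task is to show $W^s_{\zeta_s+\tau_s(t)}\subseteq\mu(Y)$, for which I would distinguish three subcases. In the subcase $t=0$ and $\zeta_s=0$, Proposition~\ref{PropA} identifies $W^s_{\tau_s(0)}$ as the minimum of the three minimal tails $W^s_0,W^s_1,W^s_2$; by Remark~\ref{Rem1} every tail $W\in\T^s_A$ contains at least one of these, hence contains $W^s_{\tau_s(0)}$. In the subcase $t\geq 1$, the induction hypothesis yields $\mu(Y^s_{\zeta_s+t-1})=W^s_{\zeta_s+\tau_s(t-1)}$, and the relation $Y^s_{\zeta_s+t-1}\prec Y$ extends $Y^s_{\zeta_s+t-1}$ across nodes of $X_A$ disjoint from its terminal set, adding non-exceptional components and forcing $\mu(Y)$ to strictly exceed $W^s_{\zeta_s+\tau_s(t-1)}$. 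The subcase $t=0$ and $\zeta_s\geq 3$ is analogous, using $\mu(Y^s_{\zeta_s-1})=W^s_{\zeta_s-1}$ from Lemma~\ref{LemA}. In these latter two subcases I would then use the total ordering of $\T^s_A$ from Remark~\ref{Rem1} together with the chain $W^s_{\zeta_s+\tau_s(0)}\subseteq W^s_{\zeta_s+\tau_s(1)}\subseteq W^s_{\zeta_s+\tau_s(2)}$ of Proposition~\ref{PropA} to identify $W^s_{\zeta_s+\tau_s(t)}$ as the immediate successor of the previously matched tail and conclude $W^s_{\zeta_s+\tau_s(t)}\subseteq\mu(Y)$.

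The main obstacle will be rigorously upgrading the tail $W$ supplied by Lemma~\ref{LemC} to one that strictly exceeds the previously matched tail, thereby ruling out the degenerate possibility $W=W^s_{\zeta_s+\tau_s(t-1)}$. I expect this will require reapplying Lemma~\ref{maxT2} or Lemma~\ref{maxT3} directly to $\mu(Y)$ with the improved knowledge that $\mu(Y)$ crosses at least one node outside $\term_{W^s_{\zeta_s+\tau_s(t-1)}}$, then invoking Lemmas~\ref{LemB} and~\ref{LemD} to pin down the terminal sets of tails in each $\T^s_{\alpha,\beta}$, together with the precise definition of $\zeta_s$ via~\eqref{zeta-cond}, to ensure that no tail of $\T^s_A$ sits strictly between the previously matched tail and $W^s_{\zeta_s+\tau_s(t)}$.
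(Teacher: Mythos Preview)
Your base case $t=\zeta_s=0$ matches the paper's: Lemma~\ref{LemC} produces some $W\in\T^s_A$ inside $\mu(Y^s_0)$, and the chain of Proposition~\ref{PropA} forces $W^s_{\tau_s(0)}\subseteq W$. The argument for $\mu(Y)\ne C$ via $\sigma(0)$ is also fine.

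The gap is in the inductive step. Your claim that $Y^s_{\zeta_s+t-1}\prec Y^s_{\zeta_s+t}$ ``adds non-exceptional components and forces $\mu(Y)$ to strictly exceed $W^s_{\zeta_s+\tau_s(t-1)}$'' is not justified: nothing prevents $Y^s_{\zeta_s+t}\setminus Y^s_{\zeta_s+t-1}$ from consisting solely of $\mu$-exceptional components, in which case $\mu(Y^s_{\zeta_s+t})=\mu(Y^s_{\zeta_s+t-1})$. Even granting strict containment of images, Lemma~\ref{LemC} would only hand you \emph{some} tail of $\T^s_A$ inside $\mu(Y^s_{\zeta_s+t})$, possibly again $W^s_{\tau_s(0)}$; there is no ``total ordering of $\T^s_A$'' to invoke (Remark~\ref{Rem1} compares only pairs $\T^s_{i,k}$, $\T^s_{j,k}$ differing in one index), and the chain in Proposition~\ref{PropA} does not say that $W^s_{\zeta_s+\tau_s(t)}$ is the immediate successor of $W^s_{\zeta_s+\tau_s(t-1)}$ inside the full $\T^s_A$. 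Your closing paragraph correctly identifies this as the obstacle but does not supply the missing mechanism.

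The paper does not try to upgrade the output of Lemma~\ref{LemC}. Instead, for $t\ge 1$ it works directly with $\mu(Y^s_{\zeta_s+t})$ and determines, via the counting identities \eqref{dtA} and \eqref{ne3}, for how many pairs $(\alpha,\beta)\in\P_A$ the tail $\mu(Y^s_{\zeta_s+t})$ is $\T^2_{\alpha,\beta}$-free. This is done by a case analysis on whether $W:=\mu(Y^s_{\zeta_s})=W^s_{\zeta_s+\tau_s(0)}$ crosses $R_1$ and $R_2$, and on the values $d_{\T^2_A,R_i}$, $d_{\T^3_A,R_i}$, using Lemmas~\ref{LemB} and~\ref{LemD} to locate the unique node $S\notin\{R_1,R_2\}$ with $d_{\T^2_A,S}\not\equiv 0\ (3)$ and to decide whether $\mu(Y^s_{\zeta_s+t})$ crosses $S$. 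Once $\mu(Y^s_{\zeta_s+t})$ is shown to be $\T^2_{\alpha,\beta}$-free for the right pair(s), Lemmas~\ref{maxT2}/\ref{maxT3} together with $W^s_{\zeta_s+\tau_s(t-1)}\prec\mu(Y^s_{\zeta_s+t})$ yield $W^s_{\zeta_s+\tau_s(t)}\subseteq\mu(Y^s_{\zeta_s+t})$ directly. In short, the inductive step hinges on controlling $\T^2$-freeness of $\mu(Y^s_{\zeta_s+t})$, not on bootstrapping Lemma~\ref{LemC}.
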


\begin{proof}
Recall that by Lemma \ref{PropA} we have
\[
W^s_{\zeta_s+\tau_s(0)}\subseteq W^s_{\zeta_s+\tau_s(1)}\subseteq W^s_{\zeta_s+\tau_s(2)}.
\]
If $t=\zeta_s=0$ and $\wh\T^s_A\ne\emptyset$, we need only show that $\mu(Y^s_0)$ contains a tail of $\T^s_A$, which is true by Lemma \ref{LemC}.
We have two remaining cases. 

\smallskip

\emph{\bf Case 1:} $\zeta_s\ge 3$ (hence $s=3$). 
  By Lemma \ref{LemA}, we have $\#\wh\T^s_A\ge \zeta_s$ and
  \[
  \mu(Y^s_{\zeta_s-3})=\mu(Y^s_{\zeta_s-2})=\mu(Y^s_{\zeta_s-1})=W^s_{\zeta_s-3}=W^s_{\zeta_s-2}=W^s_{\zeta_s-1}.
  \]
 We see that  $W^s_{\zeta_s-t_1}\prec \mu(Y^s_{t_2})$ and $\mu(Y^s_{t_2})$ crosses $R_1$ and $R_2$, for every $t_1\in\{0,1,2\}$ and for every $Y^s_{t_2}\in \T^s_A$ with $t_2\ge \zeta_s$.
 
 Suppose $\#\wh\T^s_A>\zeta_s$. Since $W_{\zeta_s+\tau_s(0)}$ crosses $R_1$ and $R_2$ and is not contained in $\cap_{(\alpha,\beta)\in\P_A} \T^s_{\alpha,\beta}$, by Remark \ref{Rem2} 
 there is a terminal point $S$ of  $W_{\zeta_s+\tau(0)}$ such that $S\in C\setminus\{R_1,R_2\}$ and $d_{\T^2_A,S}\not\equiv0(3)$. By Lemma \ref{LemB}(i), the last two properties uniquely determined  $S$. Using \eqref{ne3} we see that $\mu(Y^s_{\zeta_s})$ is $\T^2_{\alpha,\beta}$-free for some $(\alpha,\beta)\in\P_A$, hence it contains $W^s_{\zeta_s+\tau_s(0)}$, concluding the case $t=0$.

 Suppose $\#\T^s_A>\zeta_s+1$ and $\mu(Y^s_{\zeta_s})=W_{\zeta_s+\tau(0)}$. Using \eqref{dtA}, we see that either $d_{\T^2_A,S}=1$ and hence $\mu(Y^s_{\zeta_s+1})$ is $\T^2_{\alpha,\beta}$-free for two distinct pair $(\alpha,\beta)$ in $\P_A$, or $d_{\T^2_A,S}=2$ and hence $\mu(Y^s_{\zeta_s+1})$ crosses $S$ and it is $\T^2_A$-free. In any cases, we have 
 \begin{equation}\label{Y1}
 W_{\zeta_s+\tau(0)}\subseteq W_{\zeta_s+\tau(1)}\subseteq\mu(Y^s_{\zeta_s+1})\ne C
 \end{equation}
 
 Finally, suppose $\#\T^s_A>\zeta_s+2$ and $\mu(Y^s_{\zeta_s})=W_{\zeta_s+\tau(0)}$. Using again \eqref{dtA}, we get that $\mu(Y^s_{\zeta_s+2})$ crosses $S$, and hence it is $\T^2_A$-free, from which we get
 \begin{equation}\label{Y2}
  W_{\zeta_s+\tau(0)}\subseteq W_{\zeta_s+\tau(1)}\subseteq W_{\zeta_s+\tau(2)}\subseteq\mu(Y^s_{\zeta_s+2})\ne C.
 \end{equation}

\emph{\bf Case 2:} $t>\zeta_s=0$. By the given hypothesis, we have $\mu(Y^s_0)=W:=W_{\tau(0)}$.  Suppose that $\#\T^s_A>t$. 
If $\mu(Y^s_0)$ crosses $R_1$ and $R_2$, we have two cases. In the first case, we have $s=2$, hence $\mu(Y^s_0)$ contains a tail of $\T^2_{\alpha,\beta}$, for every $(\alpha,\beta)\in\P_A$. In the second case, we have $s=3$, and we can conclude arguing exactly as in the last two paragraphs of Case 1. Therefore, we may assume that  $\mu(Y^s_0)$, and hence $W$, admits a terminal point in the set $\{R_1,R_2\}$. 

Notice that $C_{\gamma'_u}\subseteq W$, for some $u\in\{1,2\}$, otherwise we would have 
\[
W\in \T^s_{\gamma_1,\gamma_2}\; \text{ and } \; \{R_1,R_2\}\subseteq \term_W,
\]
 contradicting the fact that $A$ is quasistable. In particular, since $E_{R_u,\gamma_u}\subseteq Y^s_0$, there are $t+1$ distinct pairs $(\alpha,\beta)$ in $\P_A$ such that $C_\alpha\cup C_\beta\subseteq \mu(Y^s_t)$. 
If either $s=2$, or $s=3$ with $d_{\T^2_A,R_1}=d_{\T^2_A,R_2}=0$, we see that $\mu(Y^s_t)$ contains $t+1$ tails of $\T^2_A$, hence Equations \eqref{Y1} and \eqref{Y2} hold respectively when $t=1$ and $t=2$, and for $\zeta_s=0$. Therefore, we may also assume that $s=3$ and $d_{\T^2_A,R_1}\ne0$.

Suppose $R_1\in\term_W$. By Lemma \ref{PropA}(ii), we have $W\in\T^3_{\gamma_1,\gamma_2}$. Since $A$ is quasistable and $W=\mu(Y^s_0)$, we see that $\mu(Y^s_0)$ crosses $R_2$. We now use \eqref{dtA} and Lemma \ref{LemB}(ii): We deduce that $\mu(Y^s_t)$ crosses $R_1$ and $R_2$ and $d_{\T^2_A,S}\equiv0(3)$, for every node $S\in C\setminus\{R_1,R_2\}$, hence that $\mu(Y^s_t)$ is $\T^2_A$-free. Again, we see that Equations \eqref{Y1} and \eqref{Y2} hold respectively when $t=1$ and $t=2$, and for $\zeta_s=0$. 

Suppose $\term_W\cap \{R_1,R_2\}=\{R_2\}$ and $d_{\T^2_A,R_2}=0$. By Lemma \ref{Diff}(i), we have 
\begin{equation}\label{=}
\T^2_{\gamma_1,\gamma_2}=\T^2_{\gamma_1,\gamma'_2}.
\end{equation} By Lemmas \ref{LemB}(i) and \ref{LemD} there is a unique node $S\in C\setminus\{R_1,R_2\}$ such that $d_{\T^2_A,S}\not\equiv0(3)$ and $S\in W$.
 If $S\not\in \term_W$, then $\mu(Y^s_t)$ is $\T^2_A$-free,  and we conclude as in the previous paragraph. Thus, we may assume that $S\in\term_W$.  
If either $d_{\T^2_A,S}=2$ or $t=2$, then $\mu(Y^s_t)$ crosses $S$ and hence it is $\T^2_A$-free (we use again \eqref{dtA}), and  we are done. 
If $d_{\T^2_A,S}=t=1$, then Equation \eqref{=} implies that $d_{\T^2_{\gamma'_1,\gamma_2},S}\ne0$, hence by Lemma \ref{LemD} we get $W\in\T^3_{\gamma_1,\gamma'_2}$.
Since 
\[
E_{R_2,\gamma_2}\subseteq Y^s_0 \; \text{ and } \; W=\mu(Y^s_0),
\]
 it follows that $\mu(Y^s_1)$ crosses $R_1$ and $R_2$. Since $d_{\T^2_A,S}=1$, we conclude that $\mu(Y_1)$ is $\T^2_{\alpha,\beta}$-free for two distinct pairs $(\alpha,\beta)$ in $\P_A$, and hence Equation \eqref{Y1} holds. 
\end{proof}

  \section{The resolution of the degree-2 Abel-Jacobi map}
  \noindent
Let $\pi\: \C\ra B$ be a smoothing of a nodal curve $C$ and  
$\phi\:\wt\C^2\ra \C^2$ be a good partial desingularization.     
From now on, we only consider the Abel--Jacobi map of $\C/B$
\[
\alpha^2\:\C^2\dashrightarrow \J
\] 
introduced in \eqref{AJ}. Our goal is to find a resolution of $\alpha^2$ (see Theorem \ref{main5}). 
The strategy consists in finding the $\phi$'s for which the map $\alpha^2\circ\phi\col\wt\C^2\dashrightarrow\J$ induced by $\psi_*\L_\psi$ is defined everywhere. Thanks to Lemma \ref{ext-red} and Theorem \ref{main3},  we need only deal with distinguished points of $\wt\C^2$ lying over points of $\C^2$ consisting of pairs of distinct reducible nodes of $C$.  
To produce a map $\phi$ achieving our goal, we provide a combinatorial characterization of the blowups of $\C^2$ for which $\alpha^2\circ\phi$ is defined on an open subset of $\wt\C^2$ containing such a distinguished point (see Theorem \ref{main4}).
 
Throughout this section, we will keep the notation of Section \ref{Not3}.
    
    \subsection{Comparing properties of distinguished points}
    
   In this subsection, we will prove the key result stating that if a distinguished point is quasistable, then it is synchronized (see Proposition \ref{qs->s}). Actually, the two notions turn out to be equivalent (see Theorem \ref{main4}). 
    
\begin{Lem}\label{1sync}
The distinguished point $A$ is 1-synchronized.
\end{Lem}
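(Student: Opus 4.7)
\emph{Plan.} The statement amounts to establishing a bijection $\wh\T^1_A\to\T^1_A$ induced by $Y\mapsto\mu(Y)$, with $\mu\col X_A=C(2)\to C$ the contraction. My plan is to handle $\wh\T^1_{R_1}$ and $\wh\T^1_{R_2}$ separately and then assemble the bijection, using the disjoint-union decompositions $\wh\T^1_A=\wh\T^1_{R_1}\sqcup\wh\T^1_{R_2}$ and $\T^1_A=\T^1_{\gamma_1,\gamma_2}\sqcup\T^1_{\gamma_1,\gamma'_2}\sqcup\T^1_{\gamma'_1,\gamma_2}$.

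First, I would exploit that by the analog of \cite[Lemma 4.3]{CE} applied to the nodal curve $X_A$, each $\wh\T^1_{R_t}$ is totally ordered under $\prec$. For $Y\in\wh\T^1_{R_t}$, the unique terminal point $P$ of $Y$ lies either on the exceptional chain $\wh C_{\gamma_t}$--$E_{R_t,\gamma_t}$--$E_{R_t,\gamma'_t}$--$\wh C_{\gamma'_t}$ over $R_t$, or on the exceptional chain over some other node $S$ of $C$ (in which case the whole $R_t$-chain is forced into $Y$ by the condition $E_{R_t,\gamma_t}\subseteq Y$). A direct case analysis on the position of $P$ shows that $\mu(Y)$ is a $1$-tail of $C$ containing at least one of $C_{\gamma_t}$, $C_{\gamma'_t}$ with $\sigma(0)\notin\mu(Y)$, so $\mu(Y)$ belongs to $\T^1_{\gamma_t}\cup\T^1_{\gamma'_t}$ and hence appears in $\T^1_A$.

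Next, I would describe the inverse explicitly. For a $1$-tail $W$ of $C$ appearing in some summand of $\T^1_A$, the canonical liftings $L_0^W, L_1^W$ (see Section \ref{Not1}), possibly enlarged by the interior exceptional components of $\mu^{-1}(W)$, provide the preimages in $\wh\T^1_{R_t}$: when $\term_W=\{R_t\}$, one takes $L_1^W$ (or its complementary construction when $C_{\gamma'_t}\subseteq W$); when $\term_W=\{S\}$ for $S\neq R_t$, the chain over $R_t$ is interior to $\mu^{-1}(W)$, and one selects the appropriate lift at the boundary $S$. Because $\mu$ preserves the order $\prec$, matching the totally ordered chains on both sides yields the bijection on each piece, which assembles to the desired bijection $\wh\T^1_A\to\T^1_A$.

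The main obstacle is the multiplicity bookkeeping imposed by the disjoint-union conventions: a $1$-tail $W$ simultaneously containing several of $C_{\gamma_1}, C_{\gamma'_1}, C_{\gamma_2}, C_{\gamma'_2}$ is counted in $\T^1_A$ with multiplicity equal to the number of such containments, while the same $1$-tail of $X_A$ can lie simultaneously in $\wh\T^1_{R_1}$ and $\wh\T^1_{R_2}$, doubling its contribution to $\wh\T^1_A$. Verifying that these multiplicities match on both sides, together with the fact that each matched pair is labelled coherently with its summand, is the delicate point; it uses $R_1\neq R_2$ together with the structure of the canonical liftings in $C(2)$.
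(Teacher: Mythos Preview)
Your approach is essentially the paper's: handle $\wh\T^1_{R_1}$ and $\wh\T^1_{R_2}$ separately, realize the bijection via canonical liftings, and split into cases according to whether $R_t$ is a separating node. The paper carries this out by showing directly that $\mu$ gives a bijection $\wh\T^1_{R_t}\to\T^1_{\gamma_t}\sqcup\T^1_{\gamma_t}\sqcup\T^1_{\gamma'_t}$ for each $t$: when $R_t$ is not separating one has $\T^1_{\gamma_t}=\T^1_{\gamma'_t}$ and the three canonical liftings of each $Z\in\T^1_{\gamma_t}$ fill out $\wh\T^1_{R_t}$; when $R_t$ is separating, the unique $W$ with $\term_W=\{R_t\}$ and $\sigma(0)\in W^c$ contributes two liftings (namely $L_1^W,L_2^W$) if $C_{\gamma_t}\subseteq W$ and only one ($L_2^W$) if $C_{\gamma'_t}\subseteq W$, matching the multiplicity of $W$ on the right side.

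There is one concrete error in your bookkeeping. You assert that a $1$-tail $W$ is counted in $\T^1_A$ ``with multiplicity equal to the number of [components among $C_{\gamma_1},C_{\gamma'_1},C_{\gamma_2},C_{\gamma'_2}$] contained in $W$''. That is false: since $\T^1_A=\T^1_{\gamma_1,\gamma_2}\sqcup\T^1_{\gamma_1,\gamma'_2}\sqcup\T^1_{\gamma'_1,\gamma_2}$ and $\T^1_{\alpha,\beta}=\T^1_\alpha\sqcup\T^1_\beta$, the pieces $\T^1_{\gamma_1}$ and $\T^1_{\gamma_2}$ each occur \emph{twice}, while $\T^1_{\gamma'_1}$ and $\T^1_{\gamma'_2}$ occur once. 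The correct multiplicity of $W$ is therefore
\[
2\,[C_{\gamma_1}\subseteq W]+[C_{\gamma'_1}\subseteq W]+2\,[C_{\gamma_2}\subseteq W]+[C_{\gamma'_2}\subseteq W].
\]
For instance, if $R_1$ is separating, $\term_W=\{R_1\}$, $C_{\gamma_1}\subseteq W$, and $R_2\in W^c$, then $W$ has multiplicity $2$ in $\T^1_A$, not $1$. Once you replace your multiplicity rule by this weighted count (which is exactly what the paper's piecewise matching encodes), your argument goes through and coincides with the paper's.
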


\begin{proof}
For each $t\in\{1,2\}$, write $\wh\T^1_{R_t}=\{Y_{t,0},Y_{t,1},\dots\}$, where $Y_{t,0}\prec Y_{t,1}\prec \cdots$. We have
\[
\T^1_A=\T^1_{\gamma_1}\sqcup \T^1_{\gamma_2}\sqcup \T^1_{\gamma_1}\sqcup\T^1_{\gamma'_2}\sqcup\T^2_{\gamma'_1}\sqcup\T^1_{\gamma'_2}.
\]
 If $R_t$ is not a separating node, then $\T^1_{\gamma_t}=\T^1_{\gamma'_t}$, and $\wh{\T}^1_{R_t}$ consists of the canonical liftings of the tails in $\T^1_{\gamma_t}$. Assume that $R_t$ is a separating node and let $W$ be the 1-tail of $C$ terminating in $R_t$ and containing $\sigma(0)$. Then $\T^1_{\gamma_t}$ and $\T^1_{\gamma'_t}$ differ exactly by $W$. Let $\S$ be the subset of $\T^1_{\gamma_t}\cup \T^1_{\gamma'_t}$ of the tails different from $W$. The canonical liftings of the tails in $\S$ forms exactly the subset $\wh\S$ of $\wh\T^1_{R_t}$ consisting of the tails whose image via $\mu\: C(2)\ra C$ crosses $R_t$. 
 If $W$ is in $\T^1_{\gamma_t}$, then $\wh\T^1_{R_t}\setminus\wh\S=\{Y_{t,0},Y_{t,1}\}=\{L_1^W, L_2^W\}$; if $W$ is in $\T^1_{\gamma'_t}$, then $\wh\T^1_{R_t}\setminus\wh\S=\{Y_{t,0}\}=\{L_2^W\}$. 
\end{proof}

\begin{Prop}\label{qs->s}
    If $A$ is quasistable, then $A$ is synchronized. 
\end{Prop}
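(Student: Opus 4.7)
The plan is to use Lemma~\ref{1sync} to dispose of $1$-synchronization, and then prove $2$- and $3$-synchronization by inductively identifying $\mu(Y^s_t)$ with the correct element $W^s_{f_s(t)}\in\T^s_A$, where $f_s$ is a block-permuted indexing built from the permutation $\tau_s$ of Proposition~\ref{PropA}. The two halves of each inductive step come from different sources: Proposition~\ref{PropB} gives the forward containment $W^s_{f_s(t)}\subseteq\mu(Y^s_t)$, while the reverse $\mu(Y^s_t)\subseteq W^s_{f_s(t)}$ is forced by the minimality condition defining $Y^s_t$ in $\wh\T^s_A$ once a suitable canonical lifting of $W^s_{f_s(t)}$ in $X_A=C(2)$ is exhibited.

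For $s=2$ (so $\zeta_2=0$), I would proceed by induction on $t\geq 0$. Proposition~\ref{PropA} gives a permutation $\tau_2$ of $\{0,1,2\}$ with $\tau_2(1)=0$ and $W^2_{\tau_2(0)}\subseteq W^2_0\subseteq W^2_{\tau_2(2)}$, which fixes the first block of the assignment $f_2$; subsequent blocks are obtained either by iterating the same propositions after passing to the residual nested sets, or by noting that once the minimal triple of $\T^2_A$ is matched, the monotone structure of $\T^2_{\alpha,\beta}$ inside $\T^2_A$ leaves no ambiguity. At step $t$, the inductive hypothesis and Proposition~\ref{PropB} deliver $W^2_{f_2(t)}\subseteq\mu(Y^2_t)\ne C$; for the reverse containment I would select the canonical lifting $L\in\{L^W_0,L^W_1,L^W_2\}$ of $W:=W^2_{f_2(t)}$ in $X_A$ that is a $2$-tail containing $E_{R_1,\gamma_1}$ and $E_{R_2,\gamma_2}$ and strictly $\prec$-extending $Y^2_{t-1}$; minimality of $Y^2_t$ in $\wh\T^2_A$ then forces $Y^2_t\subseteq L$, giving $\mu(Y^2_t)\subseteq W$.

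For $s=3$ I would use the $2$-synchronization just proved as input, so that Proposition~\ref{PropB} and Lemma~\ref{LemA} both become applicable. The initial range $t<\zeta_3$ is handled directly by Lemma~\ref{LemA}, which identifies $\mu(Y^3_t)=W^3_t$. From $t\geq\zeta_3$ onward the scheme of the $s=2$ case repeats verbatim: Propositions~\ref{PropA}, \ref{PropB} and Lemma~\ref{LemC} yield the forward containment and the non-triviality $\mu(Y^3_t)\ne C$, while the canonical lifting argument supplies the reverse.

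The hard part will be justifying the reverse containment $\mu(Y^s_t)\subseteq W^s_{f_s(t)}$, namely the construction of a canonical lifting $L^W_i\subset X_A$ satisfying simultaneously: (a) $L^W_i$ is a genuine $s$-tail of $X_A$; (b) $L^W_i$ is $\wh\T^2_A$-free when $s=3$ (so that it is eligible for comparison against $\wh\T^3_A$); (c) $L^W_i$ strictly $\prec$-extends the previously identified $Y^s_{t-1}$; (d) $L^W_i$ contains both $E_{R_1,\gamma_1}$ and $E_{R_2,\gamma_2}$. The correct index $i\in\{0,1,2\}$ depends on how $W$ meets the distinguished nodes $R_1,R_2$ — precisely the combinatorial data controlled by the quasistability hypothesis through Lemmas~\ref{LemB} and~\ref{LemD} together with the nesting inclusions of Proposition~\ref{PropA}. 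Once such a lifting is in hand the minimality of $Y^s_t$ closes the induction, and the order-preserving bijection $\wh\T^s_A\to\T^s_A$ is established.
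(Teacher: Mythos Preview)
Your overall architecture matches the paper's: dispose of $s=1$ by Lemma~\ref{1sync}, then for $s\in\{2,3\}$ inductively match $\mu(Y^s_t)$ with $W^s_{f_s(t)}$, using Proposition~\ref{PropB} for the forward inclusion and a lifting-plus-minimality argument for the reverse. But two of the steps you treat as routine are in fact the substance of the proof.

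First, Proposition~\ref{PropB} is stated only for $t\in\{0,1,2\}$, i.e.\ for the first block after $\zeta_s$; you cannot invoke it ``at step $t$'' for arbitrary $t$. For $t>\zeta_s+2$ the paper replaces it by a freeness statement (its condition~(P2)): the tails $\mu(Y^3_t)$ and $W^3_t$ with $t>\zeta_3+2$ are $\T^2_A$-free. This is what guarantees that the canonical liftings $L^{W_{v_t}}_i$ are $\wh\T^2_A$-free, hence eligible for comparison with $\wh\T^3_A$, and it requires its own verification (the paper's Step~3, using Lemmas~\ref{LemB} and~\ref{LemD}). Your suggestion of ``iterating the same propositions after passing to the residual nested sets'' does not work, because the relations among the three $\T^s_{\alpha,\beta}$ are not translation-invariant.

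Second, and more seriously, for the first block the lifting you need is \emph{not} one of the canonical $W$-liftings $L^W_0,L^W_1,L^W_2$. When $W=W^s_{\zeta_s+\tau_s(t)}$ has a terminal point in $\{R_1,R_2\}$, no canonical lifting contains both $E_{R_1,\gamma_1}$ and $E_{R_2,\gamma_2}$ (your condition~(d)), so your recipe ``select the canonical lifting $L\in\{L^W_0,L^W_1,L^W_2\}$'' fails outright. The paper's Step~2 constructs the correct liftings as canonical liftings adjoined with carefully chosen exceptional chains, e.g.\ $L_0^{W_{\tau(0)}}\cup E_{R_{\tau(0)},\gamma_{\tau(0)}}\cup E_{R_{3-\tau(0)}}$, and for $s=3$ also with $E_{S,m}$ or $E_S$ for the difference node $S$. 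Which pieces to adjoin is governed by a case analysis on the vanishing of $d_{\T^2_A,R_i}$ and $d_{\T^3_A,R_i}$; this is precisely where Lemmas~\ref{LemB} and~\ref{LemD} do their real work, and it is the heart of the argument rather than a detail.

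Finally, you also need surjectivity of $\mu\colon\wh\T^s_A\to\T^s_A$: once the $\mu(Y^s_t)$ are identified, one must still show that every $W^s_t\ne C$ is hit. The paper handles this separately (end of its Step~1) by exhibiting a chain of canonical liftings of $W_{t_0}$ beyond $Y_{t_0-3}$ and deriving a contradiction with the assumed maximality of $m_s$.
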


\begin{proof}
Throughout the proof, fix $s\in\{2,3\}$. When no confusion may arise, we set $W_{\cdot}:=W^s_{\cdot}$ and $Y_{\cdot}:=Y^s_{\cdot}$. Let $\zeta:=\zeta_s$  be as before Lemma \ref{LemB} and $\tau:=\tau_s$ be the permutation of $\{0,1,2\}$ of Proposition \ref{PropA} such that
   \begin{equation}\label{chain}
   W_{\zeta+\tau(0)}\subseteq W_{\zeta_s+\tau(1)}\subseteq W_{\zeta+\tau(2)}.
   \end{equation}
      Let $u$ be the maximal number in $\{0,1,2\}$ (if it exists) for which $W_{\zeta_s+\tau_s(u)}\ne C$, otherwise put $u:=-1$. Recall that we set $m_s:=\#\wh\T^s_A-1$. 

    By Lemma \ref{1sync}, we need only show that $A$ is $s$-synchronized. 
     We prove at the same time the cases $s=2$ and $s=3$; thus, during the proof for $s=3$, we may assume that $A$ is 2-synchonized.

   \smallskip
   
   \emph{\bf Step 1}.  
      We claim that $A$ is $s$-synchronized if the following properties hold
   \begin{itemize}
     \item[(P1)]
       $m_s\ge \zeta+u$ and, if $u\ge0$, then 
$\mu(Y^s_{\zeta_s+t})=W^s_{\zeta_s+\tau_s(t)}$, for every $t\in\{0,\dots u\}$;
   \item[(P2)]
  $\{\mu(Y^3_t)\}_{t>\zeta_3+2}$ and $\{W^3_t : W^3_t\ne C\}_{t>\zeta_3+2}$
 are $\T^2_A$-free.
 \end{itemize}
      
By Proposition \ref{PropB}, we see that Condition (P1) implies that, if $u\le 1$, then $m_s=\zeta+u$. 
   If $\mu$ is the set function sending a tail $Y$ of $C(2)$ to the tail $\mu(Y)$ of $C$, we need only show that we have a bijection
     \[
  \mu\:\{Y_t\}_{t\ge \zeta}\lra \{W_t : W_t\ne C\}_{t\ge \zeta}.
  \]
   Indeed, this implies that $A$ is $s$-synchronized when $\zeta=0$ and, by Lemma \ref{LemA}, also that $A$ is $s$-synchronized when $\zeta\ge3$. 
 For an integer $t\ge0$, write 
   $t=3a_t+b_t$, for integers $a_t\ge0$ and $b_t\in\{0,1,2\}$, and set $v_t:=3a_t+\tau(b_t)$.

We begin by proving that $\mu(Y_t)=W_{v_t}$,  for every $t$ in $\{\zeta,\dots,m_s\}$. We proceed by induction. We may assume that $u\ge0$, otherwise $m_s=\zeta-1$, and we have nothing to prove.
The statement holds for  $t$ in $\{\zeta,\dots,\zeta+u\}$ by Condition (P1). We may assume that $u=2$, otherwise $m_s=\zeta+u$ and we are done. 
   Let $t$ be in $\{\zeta+3,\dots, m_s\}$. By Condition (P2), the tail $\mu(Y_t)$ is $\T^2_A$-free. Since we have 
\[
   \mu(Y_{t-3})=W_{v_t-3}\prec \mu(Y_t)
\]
  (the equality by induction), it follows that $W_{v_t-3}\prec W_{v_t}\subseteq \mu(Y_t)$. We also have
   \begin{equation}\label{Due}
   Y_{t-3}\prec L_0^{W_{v_t}}\prec L_1^{W_{v_t}}\prec L_2^{W_{v_t}}.
   \end{equation}
    Notice that $v_t\ge \zeta+3$, 
   because $\zeta\equiv 0(3)$.  It follows from Condition (P2) that, if $s=3$, the canonical $W_{v_t}$-liftings are $\wh{\T}^2_A$-free, since in this case $A$ is 2-synchronized. 
  By \eqref{Due} we get  $Y_t\subseteq L_2^{W_{v_t}}$, and we deduce that $\mu(Y_t)\subseteq W_{v_t}$ and hence $\mu(Y_t)=W_{v_t}$.  
    
Up to switching the $W_t$'s with the $W_{v_t}$'s, we can assume that $\mu(Y_t)=W_t$, for every $t\in\{\zeta,\dots,m_s\}$.  By contradiction, suppose that $W_{t_0}\ne C$ is not in the image of $\mu$, for some $t_0>m_s$, with $t_0$ the minimum integer satisfying this condition. Notice that $t_0>\zeta+2$, otherwise $u\le 1$ and $W_{t_0}=C$.
    Thus, we have $t_0>2$ and $\mu(Y_{t_0-3})=W_{t_0-3}\prec W_{t_0}$ (the equality by the minimal property of $t_0$), hence 
     \[
     Y_{t_0-3}\prec L_0^{W_{t_0}}\prec L_1^{W_{t_0}}\prec L_2^{W_{t_0}}.
     \]
By Condition (P2),  the canonical liftings of $W_{t_0}$ are $\wh\T^2_A$-free for $s=3$ (we use again that $A$ is 2-synchronized for $s=3$), hence $m_s\geq t_0$, which is a contradiction.

   \smallskip
   
   \emph{\bf Step 2}. 
     We show that Condition (P1) holds.   We may assume $W_{\zeta_s+\tau(0)}\ne C$, otherwise we are done, because by Proposition \ref{PropB} we would have $\#\wh\T^s_A=\zeta$.
     
         Suppose first $s=2$, or $s=3$ with $d_{\T^2_A,R_1}=d_{\T^2_A,R_2}=0$ (hence $\zeta=0)$. 
   Define
   \[
   Y:=L_0^{W_{\tau(0)}}\cup E_{R_{\tau(0)},\gamma_{\tau(0)}}\cup E_{R_{3-\tau(0)}}, 
   \]
   \[
Y':=L_1^{W_{\tau(1)}}\cup E_{R_1} \cup E_{R_2}   \; \text{ and } \;   Y'':=
L_2^{W_{\tau(2)}}. 
\]
By Proposition \ref{PropA}(i), we have $\tau(1)=0$. In the sequel, we often use Proposition \ref{PropB} without mentioning. Since $Y$ is a proper subcurve, we have $\T^s_A\ne\emptyset$, with $Y_0\subseteq Y$,  hence $\mu(Y_0)\subseteq W_{\tau(0)}$ and $\mu(Y_0)=W_{\tau(0)}$. 
We may assume $W_{\tau(1)}\ne C$, otherwise $\wh\T^s_A=\{Y^s_0\}$, and we are done. 
 Since $Y\prec Y'$, we have $\#\wh\T^s_A>1$, with $Y^s_1\subseteq Y$, hence $\mu(Y^s_1)\subseteq W_{\tau(1)}$ and $\mu(Y^s_1)=W_{\tau(1)}$. 
We may assume $W_{\tau(2)}\ne C$, otherwise $\wh\T^s_A=\{Y^s_0,Y^s_1\}$, and we are done. Since $Y'\prec Y''$, we have $\#\wh\T^s_A>2$, with $Y^s_2\subseteq Y''$, hence $\mu(Y^s_2)\subseteq W_{\tau(2)}$ and $\mu(Y^s_2)=W_{\tau(2)}$.

Suppose now $s=3$, $d_{\T^2_A,R_1}\ne0$ and $d_{\T^3_A,R_1}\ne 0$. By Proposition \ref{PropA}(ii), we have $\zeta=\tau(0)=0$. It follows from \eqref{chain} that $R_1$ is a terminal point of $W_0$ and hence, since $A$ is quasistable, we see that $W_0$, and hence $W_{\tau(1)}$, crosses $R_2$. Define 
  \[
  Y:=L_0^{W_0}\cup E_{R_1}, \; Y':=L_1^{W_{\tau(1)}} \; \text{ and } Y'':=L_2^{W_{\tau(2)}}.
  \]
   By \eqref{dtA} and Lemma \ref{LemB}(ii), the tails $Y$, $Y'$ and $Y''$ are $\wh{\T}^2_A$-free, and $Y\prec Y'\prec Y''$. We can conclude as in the previous paragraph using Proposition \ref{PropB}.
   
  Finally, suppose $s=3$ and that the following properties hold
   \[
   d_{\T^2_A,R_1}\ne0,\; d_{\T^3_A,R_1}=0 \; \text{ and that } \; d_{\T^3_A,R_2}\ne 0 \Rightarrow d_{\T^2_A,R_2}=0.
   \]
   The tail $W_{\zeta+\tau(0)}$ is not contained in $\cap_{(\alpha,\beta)\in\Pa_A}\T^3_{\alpha,\beta}$ 
 by the definition of $\zeta$ and by \eqref{chain}. 
  There is exactly one node $S\in C\setminus\{R_1,R_2\}$ such that $d_{\T^2_A,S}\not\equiv0(3)$. Indeed, if $d_{\T^3_A,R_2}\ne0$, then $d_{\T^2_A,R_2}=0$ and $S$ is the difference node of $\T^2_{\gamma_1,\gamma_2|\gamma'_1,\gamma_2}$ distinct from $R_1$. On the other hand, if $d_{\T^3_A,R_2}=0$, then by Remark \ref{Rem2} there is a node $S\in C\setminus\{R_1,R_2\}$ such that $S\in\term_{W_{\zeta+\tau(0)}}$ and $d_{\T^2_A,S}\not\equiv0(3)$ and $S$ is uniquely determined by Lemma \ref{LemB}(i). 
 In any case, by Lemma \ref{LemD} we have $S\in W_{\zeta+\tau(0)}$. Let $m\in\{1,\dots,p\}^2$ be such that $S\in C_m$ and $C_m\subseteq W_{\zeta+\tau(0)}$, and define  
    \[
    Y:=
  \begin{cases}
  \begin{array}{ll}
     L_0^{W_{\zeta+\tau(0)}}\cup E_{R_2,\gamma_2}\cup E_{S,m} & \text{if } \tau(0)\ne1\text{ and }d_{\T^2_A,S}=1;\\
  L_0^{W_{\zeta+\tau(0)}}\cup E_{R_2,\gamma_2}\cup  E_S   & \text{if } \tau(0)\ne1\text{ and } d_{\T^2_A,S}=2;\\
  L_0^{W_{\zeta+\tau(0)}}\cup E_{R_2}\cup E_{S,m}  
& \text{if } \tau(0)=1 \text{ and }d_{\T^2_A,S}=1;\\
L_0^{W_{\zeta+\tau(0)}}\cup E_{R_2}\cup E_S & \text{if } \tau(0)=1 \text{ and }d_{\T^2_A,S}=2;
   \end{array}
  \end{cases}
  \]
  \[
  Y':=L_1^{W_{\zeta+\tau(1)}}\cup E_{R_2}\cup E_S \quad \text{ and } \quad Y'':=L_2^{W_{\zeta+\tau(2)}}.
\]
   By \eqref{dtA}, the tails $Y,$ $Y'$ and $Y''$ are $\wh\T^2_A$-free. Since $Y\prec Y'\prec Y''$, we can conclude as before using Lemma \ref{PropB}.
     
  \smallskip
  
 \emph{\bf Step 3}.   We show that Condition (P2) holds. We are in the case $s=3$, hence we may assume that $A$ is 2-synchronized. 
   Let $S$ be a node of $C$ such that $d_{\T^2_A,S}\ne0$. We need to prove that $S$ is not a terminal point of a tail in the sets 
   \[
   \{\mu(Y^3_t)\}_{t>\zeta+2} \; \text{ and } \{W_t : W_t\ne C\}_{t>\zeta+2}.
   \]
    If $d_{\T^2_A,S}=3$, this is obvious for the second set, and by \eqref{ne3} also for the first one. Thus, we may assume $d_{\T^2_A,S}\not\equiv 0(3)$ and, by Lemma \ref{Diff}(i), also that $d_{\T^2_A,R_1}\ne0$.
     Set $W:=W_{\zeta+\tau(0)}$. We can assume $W\ne C$ (i.e. $u\ge0$), otherwise the above sets are empty by Proposition \ref{PropB}, and we have nothing to prove.  
    Since   
\[
       \mu(Y_{\zeta+t})=W_{\zeta+\tau(t)}\subseteq W_{\zeta+\tau(t+1)} \text{ for every } t\in\{0,\dots u\},
\]
 we see that $W\prec \mu(Y_{t_1})\wedge W_{t_2}$, for every $t_1>\zeta+2$ and $t_2>\zeta+2$. Thus, it suffices that $S\in W$. Of course, we may assume that $S\in C\setminus\{R_1,R_2\}$; 
 it follows from Lemma \ref{LemB}(i) that $S$ is uniquely determined by the condition $d_{\T^2_A,S}\not\equiv0(3)$.
    
If $d_{\T^3_A,R_1}=d_{\T^3_A,R_2}=0$, then, since $W$ is not in the intersection $\cap_{(\alpha,\beta)\in\P_A} \T^3_{\alpha,\beta}$, 
we see that $W$ is the minimal tail of $\T^3_{\alpha,\beta | \alpha',\beta'}$, for some $(\alpha,\beta)$ and $(\alpha',\beta')$ in $\P_A$, hence by Remark \ref{Rem2} we have $S\in \term_{W}\subseteq W$. If $d_{\T^3_A,R_1}\ne 0$, then $S$ is in $\{R_1,R_2\}$ by Lemma \ref{LemB}(ii), hence $S\in W$. If $d_{\T^2_A,R_2}=d_{\T^3_A,R_1}=0$ and $d_{\T^3_A,R_2}\neq0$, then $R_2\in\term_W$, hence by Lemma \ref{LemD} we have $S\in W$.
   \end{proof}

\subsection{From the local to the global resolution}
\noindent
Let $\pi\col \C\ra B$ be a smoothing of a nodal curve $C$. 
We are ready to state and prove the main results of the paper. First, in Theorem \ref{main4}, we will give a combinatorial  criterion for the existence of a local resolution for the degree-2 Abel--Jacobi map $\alpha^2$ of $\C/B$. Second, in Theorem \ref{main5}, we will show that the blowup $\C^2$ along products of 2-tails and 3-tails of $C$ fulfills the local criterion and hence gives rise to a global algebraic resolution of $\alpha^2$.

\begin{Thm}\label{main4}
Let $\pi\:\C\ra B$ be  a smoothing  of a nodal curve $C$ with a section $\sigma$ through its smooth locus. Let $\phi\:\wt\C^2\ra \C^2$  and $\psi\:\wt\C^3\ra\wt \C^2\times_B\C$ be  good partial desingularizations. Let $R_1,R_2$ be in $\N(C)$, with $R_1\ne R_2$. Let $A_1$ and $A_2$ be the distinguished point of $\wt\C^2$ in $\phi^{-1}(R_1,R_2)$. 
The following properties are equivalent 
\begin{itemize}
\item[(i)]
 $A$ is quasistable, for every $A$ in $\{A_1,A_2\}$;
\item[(ii)]
$A$ is synchronized, for every $A$ in $\{A_1,A_2\}$;
\item[(iii)]
  $\psi|_{X_A*}(\L_\psi|_{X_A})$ is $\sigma(0)$-quasistable, for every $A$ in $\{A_2,A_2\}$;
 \item[(iv)]
 $\psi|_{X_A*}(\L_\psi|_{X_A})$ is simple, for every $A$ in $\{A_1,A_2\}$;
\item[(v)]
the map $\alpha^2\circ\phi\col\wt\C^2\dra \J$ induced by $\psi_*\L_\psi$ is defined on an open subset of $\wt\C^2$ containing $\{A_1, A_2\}$.
     \end{itemize}
\end{Thm}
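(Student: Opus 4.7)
The plan is to establish the cycle (i) $\Rightarrow$ (ii) $\Rightarrow$ (iii) $\Rightarrow$ (iv) $\Rightarrow$ (i), together with the equivalence (iii) $\Leftrightarrow$ (v). The implication (i) $\Rightarrow$ (ii) is exactly Proposition \ref{qs->s}. For the converse direction (ii) $\Rightarrow$ (i), needed implicitly later, I would unwind the definitions: synchronization forces that every tail $W \in \T^2_{\gamma_1,\gamma_2} \cup \T^3_{\gamma_1,\gamma_2}$ with $R_t \in \term_W$ lifts to a tail $Y$ of $X_A$ with $E_{R_t,\gamma_t}$ as an exceptional terminal component, and a straightforward check shows that having both $R_1$ and $R_2$ in the terminal loci would force the existence of two such lifts whose structure contradicts the nested chain defining $\wh\T^s_A$.

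The main computational step is (ii) $\Rightarrow$ (iii). My strategy is to mimic the proof of Theorem \ref{main3}: fix a slice $\lambda\colon B\to\wt\C^2$ through $A$, form the $\lambda$-smoothing $\W\to B$ of $X_A$ with induced map $\theta\colon \W\to\wt\C^3$, and use \eqref{diag-type} to write $\theta^*(\I_{\wt\Delta_1}\otimes\I_{\wt\Delta_2})|_{X_A}$ as $\O_\W(D)|_{X_A}\otimes N$ for a divisor $D$ on $\psi|_{X_A}$-exceptional components and a line bundle $N$ of controlled bidegree. The synchronization hypothesis $\wh\T^s_A \cong \T^s_A$ for $s\in\{1,2,3\}$, combined with Theorem \ref{main1}, is exactly what permits a matching between the twist divisor $-\sum\alpha_{i,k,m}^{\P,\E}C_{i,k,m,\psi}$ appearing in $\L_\psi$ and explicit combinations of canonical liftings $L_j^W$ of tails $W \in \T_{\gamma_i,\gamma_k}$ on the three relevant divisors meeting at $A$. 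Once this identification is made, Lemma \ref{compadm} reduces quasistability of the pushforward to quasistability on $X_A$ of the relevant admissible twist, and the inequalities $0 < \beta_{\wh N}(V) \le k_V$ on connected subcurves $V$ follow via the same canonical-lifting trick used in Theorem \ref{main3}, comparing $V$ to $L_0^{\psi(V)} \cup E_V$ and $L_2^{\psi(V)}$ and invoking Lemma \ref{mu*}.

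The implication (iii) $\Rightarrow$ (iv) is general: any $\sigma(0)$-quasistable torsion-free rank-1 sheaf on a nodal curve is simple (see the discussion after Theorem \ref{thmB}). The delicate step (iv) $\Rightarrow$ (i) I would argue by contrapositive: if $A$ fails the quasistability inequality, say $\{R_1,R_2\} \subseteq \bigcup_{W\in \T^2_{\gamma_1,\gamma_2}\cup \T^3_{\gamma_1,\gamma_2}}\term_W$, then the divisor $D$ appearing in $\theta^*\L_\psi|_{X_A}$ forces the pushforward $\psi|_{X_A*}(\L_\psi|_{X_A})$ to fail to be invertible along a set of nodes of $C$ containing (or forming) a disconnecting subset, so the sheaf acquires a non-scalar endomorphism. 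Combinatorially, the two terminal tails at $R_1$ and $R_2$ in the $(\gamma_1,\gamma_2)$-chain yield, on the exceptional chains $E_{R_1}$ and $E_{R_2}$, simultaneous degree-drops that cannot be reabsorbed into invertibility at both nodes, exhibiting the non-simplicity. Finally, (iii) $\Rightarrow$ (v) is Lemma \ref{ext-red}, and (v) $\Rightarrow$ (iii) is immediate because $\J$ is the fine moduli scheme of $\sigma$-quasistable sheaves, so any extension of the rational map $\alpha^2\circ\phi$ at $A$ is classified by a $\sigma(0)$-quasistable fibre sheaf, which (up to base-change-invariance from Theorem \ref{main2}) is $\psi|_{X_A*}(\L_\psi|_{X_A})$.

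The main obstacle I expect is step (ii) $\Rightarrow$ (iii): the translation between the combinatorics of $\wh\T^s_A$ on $X_A$ and $\T^s_A$ on $C$ must be made precise enough to control the twist divisor $D$ on every $\psi$-exceptional chain simultaneously, and the quasistability inequalities must be checked against subcurves $V$ of $X_A$ that may cross several exceptional components at once. The secondary obstacle is (iv) $\Rightarrow$ (i), where one must exhibit explicit non-simplicity purely from the failure of the combinatorial quasistability condition, and the delicacy lies in identifying the disconnecting nodes in the non-invertibility locus of the pushforward directly from the terminal-point structure of the offending tails in $\T^2_A \cup \T^3_A$.
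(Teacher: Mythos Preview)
Your overall cycle and the easy implications match the paper. The implication (i) $\Rightarrow$ (ii) is indeed Proposition \ref{qs->s}; (iii) $\Rightarrow$ (iv) is \cite[Proposition 1]{E01}; and (iii) $\Leftrightarrow$ (v) follows from Theorem \ref{main2} (admissibility and base change), Lemma \ref{ext-red}, and the fine-moduli property of $\J$, exactly as you say. Your aside on (ii) $\Rightarrow$ (i) is superfluous: the cycle closes without it.

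For (ii) $\Rightarrow$ (iii) you are working harder than necessary. The paper does not redo the $\beta$-inequality estimates of Theorem \ref{main3}. Instead it observes that synchronization means the bijection $\wh\T_A\to\T_A$ lets you apply Theorem \ref{main1} \emph{directly to the curve $X_A=C(2)$}: the nested set $\wh\T_A$ is by definition the nested set for $(E_{R_1,\gamma_1},E_{R_2,\gamma_2})$ on $C(2)$, so $\theta^*\xi^*\P|_{X_A}(-Q_{\gamma_1}-Q_{\gamma_2})\otimes\O_\W(-\sum_{Y\in\wh\T_A}Y)|_{X_A}$ is already $\psi^{-1}(\sigma(0))$-quasistable. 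Since each $Y\in\wh\T_A$ is a lifting of $\mu(Y)\in\T_A$, this differs from $\theta^*\M|_{X_A}$ only by a divisor $D_2$ supported on exceptional components, and Lemma \ref{compadm} plus Theorem \ref{main2} finish. Your proposed route via canonical-lifting sandwiching of arbitrary subcurves $V$ might be made to work, but it is substantially more intricate and you would have to handle subcurves of $X_A$ that straddle both $E_{R_1}$ and $E_{R_2}$, a case with no analogue in Theorem \ref{main3}.

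Your sketch of (iv) $\Rightarrow$ (i) has a genuine gap. You propose to exhibit non-invertibility of the pushforward at $R_1$ and $R_2$, but the set $\{R_1,R_2\}$ need not disconnect $C$, so this alone does not give non-simplicity. The paper's argument is different and more delicate: assuming $A$ is not quasistable, it uses Lemma \ref{adia-modif} to produce subcurves $Y_1,Y_2$ of $C$ with $\O_\C(Z_{\gamma_1,\gamma_2})|_C\simeq\O_\C(Z_{\gamma'_1,\gamma_2}-Y_1)|_C$ and $\O_\C(Z_{\gamma_1,\gamma_2})|_C\simeq\O_\C(Z_{\gamma_1,\gamma'_2}-Y_2)|_C$, then modifies these to $Y'_1,Y'_2$ and sets $Y:=Y'_1\cup Y'_2$. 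The point is that $\term_Y$ \emph{is} disconnecting (trivially, since $Y$ is a proper subcurve), and a direct degree computation using Lemma \ref{mu*} and \eqref{First-Rel} shows that on every maximal exceptional chain over a node of $\term_Y$ the sheaf $\L_\psi$ has a component of nonzero degree. Hence the pushforward fails to be invertible exactly along the disconnecting set $\term_Y$. The construction of $Y$ from the structure of $\T^2_{\gamma_1,\gamma_2|\gamma'_1,\gamma_2}$ and $\T^2_{\gamma_1,\gamma_2|\gamma_1,\gamma'_2}$ via Lemma \ref{adia-modif} is the missing idea in your proposal.
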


    \begin{proof} 
By Proposition \ref{qs->s}, item (i) implies item (ii). Using \cite[Proposition 1]{E01}, item (iii) implies item (iv). By Theorem \ref{main2}, $\L_\psi|_{X_A}$ is $\psi|_{X_A}$-admissible and its formation commutes with base change: By Lemma \ref{ext-red} and by the fact that $\J$ is a fine moduli scheme (see Section \ref{Jacobian}), items (iii) and (v) are equivalent.
    
   From now on, we will denote by $A$ one of the two distinguished points in $\{A_1,A_2\}$. 
 Recall the identification of $X_A$ with $C(2)$ and of $\psi(X_A)$ with $C$, and the identification of the contraction map $\mu\: C(2)\ra C$ with $\psi|_{X_A}\: X_A\ra \psi(X_A)$. As usual, we let $(\gamma_1,\gamma_2)$ and $(\gamma'_1,\gamma'_2)$ in $\{1,\dots,p\}^2$ be such that
  \[
  A\in C_{\gamma_1,\gamma_2,\phi}\cap C_{\gamma_1,\gamma'_2,\phi}\cap C_{\gamma'_1,\gamma_2,\phi}.
  \]
Also, we fix a slice $\lambda\:B\to\wt\C^2$ of $\wt\C^2$ through $A$. We let $\W\ra B$ be the $\lambda$-smoothing of $X_A$ and $\theta\col \W\ra \wt\C^3$ be the induced map 
 (see Diagram \eqref{diagram}). 
 By \eqref{diag-type}, we have
 \begin{equation}\label{First-Rel}
  \theta^*\left(\I_{\wt\Delta_1|\wt\C^3}\otimes \I_{\wt\Delta_2|\wt\C^3}\right)\simeq  \O_{\W}(D_1-\Gamma_1-\Gamma_2),
  \end{equation}
  where $\Gamma_1$ and $\Gamma_2$ are relative Cartier divisors on $\W/B$ intersecting transversally $X_A$ respectively in $E_{R_1,\gamma_1}$ and $E_{R_2,\gamma_2}$, and $D_1$ is 
  a Cartier divisor of $\W$ supported on $\psi|_{X_A}$-exceptional components. We set 
  \[
 \M:=\O_{\wt\C^3}\left(-\sum_{1\le i,k\le p}\sum_{m\in\{1,\dots,p\}} \alpha_{i,k,m} C_{i,k,m,\psi}\right).
  \]
Fix smooth points $Q_{\gamma_1}$ and $Q_{\gamma_2}$ of $X_A$ lying respectively on $E_{R_1,\gamma_1}$ and $E_{R_2,\gamma_2}$.

  We prove that item (ii) implies item (iii). Suppose that $A$ is synchronized. Then the set function sending a tail $Y$ of $C(2)$ to the tail $\mu(Y)$ of $C$ induces a bijection
  \[
  \wh\T_A\lra \T_A.
  \]
  By the definition of $\wh\T_A$ and by Theorem \ref{main1} this means that there is a Cartier divisor $D_2$ of $\W$ supported on $\psi|_{X_A}$-exceptional components such that 
\[
\theta^*\xi^*\P|_{X_A}(-Q_{\gamma_1}-Q_{\gamma_2})\otimes\theta^*\M(D_2)|_{X_A}
\]
  is a $\psi^{-1}(\sigma(0))$-quasistable invertible sheaf on $X_A$. Since \eqref{First-Rel} implies the relation
  \[
  \theta^*\L_\psi|_{X_A}\simeq \O_\W(D_1-D_2)|_{X_A}\otimes\theta^*\xi^*\P|_{X_A}(-Q_{\gamma_1}-Q_{\gamma_2})\otimes\theta^*\M(D_2)|_{X_A},
\]
 combining Lemmas  \ref{main2} and \ref{compadm}, we see that $\psi|_{X_A*}(\L_\psi|_{X_A})$ is $\sigma(0)$-quasistable. 
  
  We prove that item (iv) implies item (i). Suppose $A$ is not quasistable. Then $A_1$ and $A_2$ both are not quasistable. Up to switching $A$ with the other point in $\{A_1,A_2\}$, we may assume that there are tails in $\T^2_{\gamma_1,\gamma_2}$ containing $R_1$ and $R_2$ as terminal points. By Lemma \ref{adia-modif}, there are subcurves $Y_1$ and $Y_2$ of $C$ such that 
  \[
  \O_\C(Z_{\gamma_1,\gamma_2})|_C\simeq\O_\C(Z_{\gamma'_1,\gamma_2}-Y_1)|_C 
  \; \text{ and } \; 
  \O_\C(Z_{\gamma_1,\gamma_2})|_C\simeq\O_\C(Z_{\gamma_1,\gamma'_2}-Y_2)|_C
  \]
  and such that
  \[
  C_{\gamma_1}\subseteq Y_1, \; C_{\gamma'_1}\subseteq Y_1^c,\;   
   C_{\gamma_2}\subseteq Y_2 \; \text{ and } \; C_{\gamma'_2}\subseteq Y_2^c.
  \]
  Fix $t\in\{1,2\}$. The subcurve $Y_t$ is obtained by the rule
  \[
  Y_t=\sum_{Z\in\T_{\gamma_1,\gamma_2}} Z - \sum_{Z\in\T_{\gamma_{3-t},\gamma'_t}}Z, 
  \]
  and $\T^s_{\gamma_1,\gamma_2|\gamma_{3-t}\gamma'_t}$ is a totally ordered set with respect to inclusion for each $s$ in $\{1,2,3\}$ (see Proposition \ref{Diff}). Therefore, 
 if $R_{3-t}$ is a terminal point of $Y_t$, we have 
  \[
  C_{\gamma_{3-t}}\subseteq Y_t \; \text{ and } \; C_{\gamma'_{3-t}}\subseteq Y_t^c.
  \] 
For each $t\in\{1,2\}$, we define
  \[
 Y'_t:=
 \begin{cases}
 \begin{array}{ll}
 Y^c_t  & \text{if } Y^c_t \text{ crosses } R_{3-t}; \\
  Y_t &  \text{otherwise}.
  \end{array}
 \end{cases}
 \]
We define the (nonempty) subcurve $Y$ of $C$ 
\[
Y:=Y'_1\cup Y'_2.
\]
 
 We claim that for every maximal chain $E$ of $\psi|_{X_A}$-exceptional components lying over a node of $Y\wedge Y^c$, there is a component of $E$ over which the invertible sheaf $\L_\psi$ has nonzero degree. Indeed, if we put 
  \[
  D:=-\sum_{m=1}^p\alpha_{\gamma_1,\gamma_2,m}(C_{\gamma_1,\gamma_2,m,\psi}+C_{\gamma_1,\gamma'_2,m,\psi}+C_{\gamma'_1,\gamma_2,m,\psi}),
  \]
  then $\theta^*\L_\psi$ is obtained by the pull-back via $\theta$ of the invertible sheaf 
   \[
\xi^*\P\otimes\I_{\wt\Delta_1|\wt\C^3}\otimes \I_{\wt\Delta_2|\wt\C^3} \otimes\O_{\wt\C^3}\left(D+\sum_{C_m\subseteq Y_2}C_{\gamma_1,\gamma'_2,m,\psi}+\sum_{C_m\subseteq Y_1}C_{\gamma'_1,\gamma_2,m,\psi}\right).
 \]
 By Lemma \ref{mu*} the degree of $\O_{\wt\C^3}(D)$ on every $\psi|_{X_A}$-exceptional component is 0. Let $S$ be a node in $Y\wedge Y^c\setminus\{R_1,R_2\}$. Since $\I_{\wt\Delta_1|\wt\C^3}\otimes \I_{\wt\Delta_2|\wt\C^3}$ has degree 0 on each component of $E_S$, we see that $\L_\psi$ has nonzero degree on at least one component of $E_S$. Finally, consider $R_t$, for some $t\in\{1,2\}$. It is easy to check that if $R_t\in\term_Y$, then $R_t\in\term_{Y_1}\cap \term_{Y_2}$ and $C_{\gamma_t}\subseteq Y$.
  Thus,  using \eqref{First-Rel}, we see that there is a Cartier divisor $D_3$ of $\W$ supported on the $\psi|_{X_A}$-exceptional components $E_{R_t,\gamma_t}$ and $E_{R_t,\gamma'_t}$ such that
 \[
\deg_{E_{R_t,\gamma_t}} \theta^*\L_\psi(D_3)=1 \; \text{ and } \deg_{E_{R_t,\gamma'_t}} \theta^*\L_\psi(D_3)=0.
 \]
By Theorem \ref{main2}, $\L_\psi$ is $\psi$-admissible, hence one of the following properties holds
 \[
 \deg_{E_{R_t,\gamma_t}}\L_\psi=\deg_{E_{R_t,\gamma'_t}}\L_\psi+1=0
 \; \text{ or }  \;
 \deg_{E_{R_t,\gamma_t}}\L_\psi+1=\deg_{E_{R_t,\gamma'_t}}\L_\psi-1=0.
 \]
 The proof of the claim is complete.
 
 The claim implies that $\psi|_{X_A*}(\L_\psi|_{X_A})$ is non invertible at $Y\wedge Y^c$. Since $Y\wedge Y^c$ forms a disconnecting sets of nodes of $C$, the sheaf $\psi|_{X_A*}(\L_\psi|_{X_A})$ is not simple. 
    \end{proof}

Let $\S$ be an ordered set of Weil divisors of $\C^2$ given by
\[
\S=\{W_{1,1}\times W_{1,2},\dots,W_{N,1}\times W_{N,2}\},
\]
where $W_{i,j}$ is a subcurve of $C$. We consider the blowup sequence
\[
\phi_\S\col \wh{\C}^2_\S:=\wh\C^2_N\stackrel{\phi_N}{\lra}\cdots \stackrel{\phi_2}{\lra}\wh\C^2_1\stackrel{\phi_1}{\lra}\wh\C^2_0\stackrel{\phi_0}{\lra}\C^2,
\]
where $\phi_0$ is the blowup of $\C^2$ along the diagonal subscheme of $\C^2$, and where $\phi_i$ is the blowup of $\wh\C^2_{i-1}$ along the strict transform of $W_{i,1}\times W_{i,2}$ via the composed map $\phi_0\circ\cdots\circ\phi_{i-1}$.  

We say that $\phi_\S$ is \emph{symmetric} if for every $i$ in $\{1,\dots,N\}$ there is $j$ in $\{1,\dots,N\}$ (possibly with $i=j$) such that
\[
W_{i,1}=W_{j,2} \; \text{ and } \; W_{i,2}=W_{j,1}.
\]

We say that \emph{$\phi_\S$ resolves the degree-2 Abel--Jacobi map of $\C/B$} if the composed rational map 
\[
\alpha^2\circ\phi_\S\col\wh\C^2_\S\dra \J
\]
is a morphism, where $\alpha^2\col\C^2\dashrightarrow \J$ is the degree-2 Abel--Jacobi map of $\C/B$.

\begin{Thm}\label{main5}
 Let $\pi\:\C\ra B$  be a smoothing of a nodal curve $C$ with a section $\sigma$ through its smooth locus. Let $C_1,\dots,C_p$ be the irreducible components of $C$. If $\S$ is an ordered set of Weil divisors of $\C^2$ given by products of type $W\times W$, for tails $W$ of $C$ with $k_W\in\{1,2,3\}$, then $\phi_\S$ is independent of the ordering of the Weil divisors appearing in $\S$. Consider the set of Weil divisors of $\C^2$
\[
  \T=\{W\times W : W \text{ is a tail of $C$, with } k_W\in\{2,3\}\}.
  \]
 Then $\phi_\T$ is symmetric and resolves the degree-2 Abel--Jacobi map of $\C/B$. 
\end{Thm}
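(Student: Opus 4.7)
The plan is to dispatch the three assertions of the theorem in turn, leveraging Theorem \ref{main4} for the local criterion, Theorem \ref{main3} for the diagonal locus, and the combinatorics of nested 2-tails and 3-tails from Section \ref{Nestedsets}.

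The independence of the ordering can be checked locally via the analysis of $\C^2$ carried out in Section \ref{Not2}. Fix a pair of nodes $(R_1,R_2)$ of $C$. Given two distinct tails $W,W'$ of $C$ with $k_W,k_{W'}\in\{1,2,3\}$, the divisors $W\times W$ and $W'\times W'$ either miss $(R_1,R_2)$, or they cut out centers in $\wh\O_{\C^2,(R_1,R_2)}$ that are either disjoint (so their blowups commute trivially) or else transverse regular subschemes whose blowups commute by the standard universal property. A short case analysis based on whether $R_i\in W\wedge W'$, $R_i\in W^c\wedge (W')^c$, etc., reduces to these two situations, and independence of ordering follows.

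The symmetry of $\phi_\T$ is immediate, since each element of $\T$ is of the form $W\times W$ and already invariant under swapping the two factors (one can take $j=i$ in the definition).

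For the resolution statement, by Lemma \ref{ext-red} it suffices to verify that $\alpha^2\circ\phi_\T$ is defined on an open neighbourhood of every distinguished point of $\wh\C^2_\T$. Theorem \ref{main3} disposes of distinguished points in $\phi_\T^{-1}(R,R)$ for $R\in\N(C)$, so we are reduced to treating a distinguished point $A$ with $\phi_\T(A)=(R_1,R_2)$ where $R_1,R_2$ are distinct reducible nodes. By Theorem \ref{main4}, it is enough to prove that every such $A$ is quasistable. The plan is then to argue by contradiction: if the quasistability condition fails at $A\in C_{\gamma_1,\gamma_2,\phi_\T}\cap C_{\gamma_1,\gamma'_2,\phi_\T}\cap C_{\gamma'_1,\gamma_2,\phi_\T}$, say with $R_1$ a terminal point of some $W\in\T^2_{\gamma_1,\gamma_2}\cup\T^3_{\gamma_1,\gamma_2}$ and $R_2$ a terminal point of some $W'$ in the same set, one uses Lemmas \ref{2tails}, \ref{3tails}, \ref{maxT2}, and \ref{maxT3} to extract a tail $W''$ with $\{R_1,R_2\}\subseteq\term_{W''}$ and $k_{W''}\in\{2,3\}$. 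Then $W''\times W''\in\T$, so the corresponding blowup $\phi_i$ occurs in the chain defining $\phi_\T$; the local picture of Figure 1 shows that after $\phi_i$ the triple of divisors meeting at the two distinguished points in $\phi_i^{-1}(R_1,R_2)$ is reorganized so that the configuration witnessing non-quasistability no longer occurs, contradicting the choice of $A$. An analogous argument disposes of a failure of the second inequality in the definition of quasistability.

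The principal obstacle is the final combinatorial step: one must track how the strict transforms $C_{\gamma,\gamma',\phi_\T}$ sit with respect to one another at each of the two distinguished points of $\phi_\T^{-1}(R_1,R_2)$, and verify that after blowing up all products $W\times W$ with $W$ a 2-tail or 3-tail of $C$, no remaining distinguished point can violate either quasistability inequality. This requires a careful iterative analysis along the blowup chain, distinguishing cases by which of $R_1,R_2$ appear as terminal points of the various nested tails in $\T^2_{\alpha,\beta}$ and $\T^3_{\alpha,\beta}$ for the relevant $(\alpha,\beta)\in\P_A$, and exploiting the totally ordered structure of these nested sets established in Proposition \ref{Diff}.
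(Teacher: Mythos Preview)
Your overall shape is right, but there is a genuine gap in the resolution argument. You apply Lemma \ref{ext-red} and Theorems \ref{main2}, \ref{main3}, \ref{main4} directly to $\phi_\T$, treating $\wh\C^2_\T$ as if it had distinguished points over every pair $(R_1,R_2)$ of reducible nodes. But $\phi_\T$ is in general \emph{not} a good partial desingularization: it only desingularizes $\C^2$ at $(R_1,R_2)$ when some $2$- or $3$-tail $W$ satisfies $\{R_1,R_2\}\subseteq\term_W$. At pairs of reducible nodes for which no such tail exists, $\wh\C^2_\T$ is still singular, there are no distinguished points in the sense of Section \ref{Not2}, and the entire apparatus (the sheaf $\L_\psi$, admissibility, Theorem \ref{main4}) is unavailable. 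Your contradiction argument --- extract $W''$ with $\{R_1,R_2\}\subseteq\term_{W''}$ --- presupposes exactly the thing that can fail.

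The paper closes this gap in two steps. First, it proves the quasistability claim for distinguished points of an \emph{arbitrary} good partial desingularization $\phi$ that factors as $\phi=\phi_\T\circ\eta$; the argument you sketch (take $W\wedge W'$, note $C_{\gamma_1}\cup C_{\gamma_2}\subseteq W\wedge W'$ and $C_{\gamma'_1}\cup C_{\gamma'_2}\subseteq(W\wedge W')^c$, so $\phi_\T$ and hence $\phi$ is locally the blowup along $C_{\gamma_1}\times C_{\gamma_2}$, forcing $A$ into one of the two admissible triple intersections) then shows $\alpha^2\circ\phi$ is a morphism. Second, to descend to $\phi_\T$ at a still-singular point $(R_1,R_2)$, the paper takes the \emph{two} possible local completions $\phi_1^{(R_1,R_2)}$ and $\phi_2^{(R_1,R_2)}$ (blowing up along $C_{\gamma_1}\times C_{\gamma_2}$ versus $C_{\gamma_1}\times C_{\gamma'_2}$); since both give morphisms by the first step, one argues as in \cite[Theorem 6.1]{CEP} that both factor through a common morphism on a neighborhood of $(R_1,R_2)$ in $\wh\C^2_\T$. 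Your proposal is missing this descent, and without it the proof does not go through.

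A smaller remark: your independence-of-ordering paragraph is too vague to be a proof. The paper's argument is short and specific: if the blowups along $W_1\times W_1$ and $W_2\times W_2$ differ locally at $(R_1,R_2)$ with $R_1\ne R_2$, then $\{R_1,R_2\}\subseteq\term_{W_1}\cap\term_{W_2}$ forces $(W_1,W_2)$ to be perfect by Lemma \ref{free-perf}(ii), yet the two blowups being along $C_{\gamma_1}\times C_{\gamma_2}$ and $C_{\gamma_1}\times C_{\gamma'_2}$ respectively places $C_{\gamma_2}$ in $W_1\wedge W_2^c$ and $C_{\gamma'_2}$ in $W_1^c\wedge W_2$, contradicting perfectness.
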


    \begin{proof}
     For every tail $W$ of $C$ with $k_W\in\{1,2,3\}$, the blowup of $\C^2$ at $W\times W$ is an isomorphism away from the locus of the points $(R_1,R_2)$, with $\{R_1,R_2\}\subseteq \term_W$. 
    Recall that the first blowup appearing in the sequence $\phi_\S$ is the blowup along the diagonal of $\C^2$. Thus, if $\S'$ is obtained by a permutation of the tails of $\S$ and if $\phi_{\S}$ is distinct from $\phi_{\S'}$,  then there are tails $W_1$ and $W_2$ of $C$ such that $k_{W_i}\in\{2,3\}$ and such that
   \[
     \{R_1,R_2\}\subseteq \term_{W_1}\cap \term_{W_2} \text{ with } R_1\ne R_2,
    \] 
   and where  the blowups $\eta_1$ and $\eta_2$ of $\C^2$ respectively along $W_1\times W_1$ and $W_2\times W_2$ are distinct locally around $(R_1,R_2)$. In particular, we see that
   $(W_1, W_2)$ is perfect. Let $(\gamma_1,\gamma'_1)$ and $(\gamma_2,\gamma'_2)$ in $\{1,\dots,p\}^2$ be such that
      \begin{equation}\label{usual}
   R_1\in C_{\gamma_1}\cap C_{\gamma'_1} \;\text{ and } \; R_2\in C_{\gamma_2}\cap C_{\gamma'_2}.
   \end{equation}
   Then, locally around the point $(R_1,R_2)$, we may assume that $\eta_1$ is the blowup of $\C^2$ along $C_{\gamma_1}\times C_{\gamma_2}$ and $\eta_2$ the one along $C_{\gamma_1}\times C_{\gamma'_2}$.  We see that \[
   C_{\gamma_1}\subseteq W_1\wedge W_2,\; C_{\gamma_2}\subseteq W_1\wedge W_2^c\; \text{ and } C_{\gamma'_2}\subseteq W_1^c\wedge W_2,
   \]
    and hence $(W_1,W_2)$ can not be perfect, yielding a contradiction.
    
    The fact that $\phi_\T$ is symmetric is clear. Let us show that $\phi_\T$ resolves the degree-2 Abel map of $\C/B$. 
    Let 
 $\phi\col\wt\C^2\ra\C^2$ be \emph{any} good partial desingularization such that 
$\phi=\phi_\T\circ\eta$, for some $\eta\col\wt\C^2\ra \wh\C^2_\T$.  

We claim that if $A\in\phi^{-1}(R_1,R_2)$ is a distinguished point of $\wt\C^2$, with  $R_1\ne R_2$ and $\{R_1,R_2\}\subseteq \N(C)$, then $A$ is quasistable. Indeed, let 
 $(\gamma_1,\gamma'_1)$ and $(\gamma_2,\gamma'_2)$ in $\{1,\dots,p\}^2$ be such that \eqref{usual} holds. Suppose that   
    \begin{equation}\label{12}
    R_1\in \term_W\;  \text{ and } \; R_2\in \term_{W'}, \text{ for }\{W,W'\}\subseteq \T^2_{\gamma_1,\gamma_2}\cup \T^3_{\gamma_1,\gamma_2}.
     \end{equation} 
     Notice that, if $W\ne W'$, then $W$ and $W'$ do not contain each other and hence, up to switching $W$ and $W'$, we have $W\in \T^2_{\gamma_1,\gamma_2}$ and $W'\in\T^3_{\gamma_1,\gamma_2}$ and $(W,W')$ is free. Using \cite[Lemma 3.3(iii)]{P1} in the case $W\ne W'$, we get in any case
          \[
     (W\wedge W') \times (W\wedge W') \in \T, 
     \]
      with
      \[
C_{\gamma_1}\cup C_{\gamma_2}\subseteq W\wedge W'        \text{ and } C_{\gamma'_1}\cup C_{\gamma'_2}\subseteq (W\wedge W')^c.
     \] 
    This implies that $\phi_\T$ is, locally around $(R_1,R_2)$, isomorphic to the blowup of $\C^2$ along $C_{\gamma_1}\times C_{\gamma_2}$, and one of the following conditions holds
\begin{equation}\label{34}
A\in  C_{\gamma_1,\gamma'_2,\phi}\cap C_{\gamma_1,\gamma_2,\phi}\cap C_{\gamma'_1,\gamma'_2,\phi}\; \text{ or } 
A\in  C_{\gamma'_1,\gamma_2,\phi}\cap C_{\gamma'_1,\gamma'_2,\phi}\cap C_{\gamma_1,\gamma_2,\phi},
\end{equation}
     and hence $A$ is quasistable. 

Let $\psi\col\wt\C^3\ra \wt\C^2\times_B\C$ be \emph{any} good partial desingularization. By the claim, Lemma \ref{ext-red} and Theorems \ref{main2}, \ref{main3} and \ref{main4}, we see that the rational map $\alpha^2\circ\phi\col\wt\C^2\dashrightarrow \J$ induced by the relative sheaf $\psi_*\L_\psi$ on $\wt\C^2\times_B\C/\wt\C^2$ is a morphism. 

In particular, the rational map 
\[
\alpha^2\circ\phi_\T\col\wh\C^2_\T\dashrightarrow \J
\]
is defined away from the locus of points of $\wh\C^2_\T$ lying isomorphically over points $(R_1,R_2)$ of $\C^2$, with $\{R_1,R_2\}\subseteq \N(C)$ and $R_1\ne R_2$. Abusing notation, let $(R_1,R_2)$ be such a point of $\wh\C^2_\T$ and assume that \eqref{usual} holds. Let 
\[
\phi^{(R_1,R_2)}_1\col \X\stackrel{\eta_1}{\ra}\wh\C^2_\T\stackrel{\phi_\T}{\ra}  \C^2 \; \text{ and } \; \phi^{(R_1,R_2)}_2\col\Y\stackrel{\eta_2}{\ra}\wh\C^2_\T\stackrel{\phi_\T}{\ra}  \C^2
\]
be good partial desingularizations, where, locally around $(R_1,R_2)$, the maps $\eta_1$ and $\eta_2$ are the blowups of $\wh\C^2_\T$ respectively along the strict transforms of $C_{\gamma_1}\times C_{\gamma_2}$ and $C_{\gamma_1}\times C_{\gamma'_2}$ via $\phi_\T$. By what we have already shown, the maps 
\[
\alpha^2\circ\phi^{(R_1,R_2)}_1\col\X\dra \J\; \text{ and } 
\alpha^2\circ\phi^{(R_1,R_2)}_1\col\Y\dra \J
\]
are morphisms. Thus, arguing as in the proof of \cite[Theorem 6.1]{CEP},  both maps factor through a morphism $U\ra \J$ defined on a Zariski neighborhood $U$ of  $\wh\C^2_\T$ containing $(R_1,R_2)$ and agreeing with $\alpha^2\circ\phi_\T$ away from $(R_1,R_2)$, hence $\alpha^2\circ\phi_\T$ is a morphism.
\end{proof}

    \begin{Rem}
Assume that $\phi_\S$ resolves the degree-2 Abel--Jacobi map of $\C/B$. We say that $\phi_\S$  is \emph{minimal} if for any  ordered set $\S'$ of Weil divisors of $\C^2$ for which $\phi_{\S'}$ resolves the degree-2 Abel--Jacobi map, there is a morphism $\eta\col\wh\C^2_{\S'}\ra\wh\C^2_\S$ such that $\phi_{\S'}=\phi_\S\circ\eta$.
    
  There exist curves for which the blowup $\phi_\T$ of Theorem \ref{main5} is not minimal. For example, let $C$ be the union of $C_1,C_2,C_3$, with $\#C_1\cap C_i=1$, for $i\in\{2,3\}$ and $\#C_2\cap C_3=2$. Assume that $\sigma(0)\in C_1$. We have 
  \[
    \T_{1,1}=\T_{1,2}=\T_{1,3}=\emptyset, \; \T_{2,2}=\T_{2,3}=\T_{3,3}=\{C_2\cup C_3\}. 
    \]
Let $\S=\{(C_2\cup C_3)\times (C_2\cup C_3)\}$. Using Theorem \ref{main4}, it is easy to see 
 $\phi_\S$ resolves the degree-2 Abel--Jacobi map of $\C/B$; moreover, $\phi_\S$ is minimal and $\phi_\S\ne \phi_\T$.   
  
   Notice that $(C_2, C_2\cup C_2)$ is terminal. In fact, it is not difficult to show that if $\phi_\T$ is not minimal, then there is a terminal pair $(W_1,W_2)$ of tails of $C$, with $k_{W_1}=2$ and $k_{W_2}=3$, and with $\sigma(0)\in W_1\subseteq W_2$.
    \end{Rem}
    
   \begin{Rem}
   Since $\T$ is symmetric, we get a commutative diagram
\[
\SelectTips{cm}{11}
\begin{xy} <16pt,0pt>:
\xymatrix{
\wh\C^2_{\T}\ar[d]^{\phi_\T}\ar[r]^{\chi\;\;\;\;\;}&  \wh\C^2_\T/S_2 \ar[d]\UseTips\ar[dr]^{\ol\beta^2_{\T}}&\\
 \C^2\ar[r] & \C^2/S_2 \ar@{..>}[r]^{\;\;\alpha^2} &\J
}
\end{xy}
\]
 By the Abel Theorem for smooth curves, the generic fiber $C_\eta$ of $\pi\col\C\ra B$ is hyperelliptic if and only if the restriction of  $\ol\beta_\T$ over the generic point of $B$ is not constant; in this case, its fiber is isomorphic to the $g^1_2$ of $C_\eta$. A natural question is whether or not a similar property holds for the special fiber of $\pi$. 
   \end{Rem}

\section*{Acknowledgments}
\noindent
This paper is a continuation of a project started in collaboration with J. Coelho and E. Esteves. I wish to thank them for several stimulating discussions. The author was partially supported by CNPq, processo 300714/2010-6.

\bigskip
\bigskip

Marco Pacini, Universidade Federal Fluminense (Uff)

Rua M. S. Braga, s/n, Valonguinho 24020-005

Niter\'oi (RJ), Brazil
 
 email: \emph{pacini@impa.br} and \emph{pacini@vm.uff.br}
    
\end{document}